\title{Talagrand's convolution conjecture up to $\log \log$ \\ via perturbed reverse heat}
\author{Yuansi Chen}
\affil{ETH Z\"urich}
\date{}
\newcommand{\supp}{\text{supp}}
\newcommand{\excess}{\alpha}
\newcommand{\xii}{\xi^{[i]}}
\newcommand{\flip}{\varsigma}
\newcommand{\cS}{\mathcal{S}}
\newcommand{\fA}{\mathfrak{A}}
\newcommand{\cM}{\mathcal{M}}
\newcommand{\btau}{\kappa}
\newcommand{\bdelta}{\bar{\delta}}
\newcommand{\stopT}{{\boldsymbol{\mathfrak{T}}}}
\newcommand{\sF}{\mathscr{F}}
\newcommand{\fU}{U}
\newcommand{\fL}{L^U}
\newcommand{\uV}{V}
\newcommand{\uW}{W}
\newcommand{\LV}{L^V}
\newcommand{\cG}{\mathcal{G}}
\newcommand{\To}{{T_o}}
\newcommand{\psiW}{\psi}
\newcommand{\PsiW}{\Psi}
\newcommand{\psiV}{\psi}
\newcommand{\PsiV}{\Psi}
\newcommand{\rV}{\mathscr{V}}
\newcommand{\rW}{\mathscr{W}}
\newcommand{\jL}{\bar{L}}
\newcommand{\Ni}{N^{[i]}}
\newcommand{\Nj}{N^{[j]}}
\newcommand{\wNi}{\widetilde{N}^{[i]}}
\newcommand{\wNj}{\widetilde{N}^{[j]}}
\newcommand{\NUi}{N^{U,[i]}}
\newcommand{\sFU}{\mathscr F^U}
\newcommand{\cE}{\mathscr{E}}
\newcommand{\dirichlet}{\mathcal{E}}
\newcommand{\tagone}{{[1]}}
\newcommand{\tagi}{{[i]}}
\newtheoremstyle{named}{}{}{\itshape}{}{\bfseries}{.}{.5em}{\thmnote{#3's }#1}
\theoremstyle{named}
\theoremstyle{plain}
\newtheorem{theorem}{Theorem}
\newtheorem{proposition}{Proposition}
\newtheorem{lemma}{Lemma}
\newtheorem{conjecture}{Conjecture}
\newtheorem{remark}{Remark}
\newcommand{\defn}{:=}
\DeclareMathOperator{\sign}{sign}
\newcommand{\real}{\ensuremath{\mathbb{R}}}
\newcommand{\naturalnum}{\ensuremath{\mathbb{N}}}
\newcommand{\Ind}{\ensuremath{\mathbb{I}}}
\newcommand{\Exs}{\ensuremath{{\mathbb{E}}}}
\newcommand{\Prob}{\ensuremath{{\mathbb{P}}}}
\newcommand{\Law}{\mathcal{L}}
\newcommand{\Ent}{\text{Ent}}
\DeclareMathOperator{\Var}{Var}
\DeclarePairedDelimiterX{\infdivx}[2]{(}{)}{%
  #1\;\delimsize\|\;#2%
}
\newcommand{\tvdist}[2]{\ensuremath{ d_{\text{\tiny{TV}}}\parenth{#1, #2} }}
\newcommand{\brackets}[1]{\left[ #1 \right]}
\newcommand{\parenth}[1]{\left( #1 \right)}
\newcommand{\braces}[1]{\left\{ #1 \right \}}
\newcommand{\abss}[1]{\left| #1 \right |}
\newcommand{\angles}[1]{\left\langle #1 \right \rangle}
\newcommand{\ceils}[1]{\left\lceil #1 \right \rceil}
\newcommand{\floors}[1]{\left\lfloor #1 \right \rfloor}
\newcommand{\tp}{^\top}
\newcommand{\vecnorm}[2]{\left\| #1\right\|_{#2}}
\begin{document}

\maketitle

\begin{abstract}
  We prove that under the heat semigroup $(P_\tau)$ on the Boolean hypercube, any nonnegative function exhibits a uniform tail bound that is better than Markov's inequality. Specifically, for any $\tau > 0$, $n \geq 1$, $\eta > e^3$, and $f: \{-1,1\}^n \to \mathbb{R}_+$ with $\int f d\mu > 0$, we have
  \begin{align*}
    \mathbb{P}_{X \sim \mu}\left( P_\tau f(X) > \eta \int f d\mu \right) \leq c_\tau \frac{ (\log \log \eta)^{\frac32} }{\eta \sqrt{\log \eta}},
  \end{align*}
  where $\mu$ is the uniform measure on the Boolean hypercube $\{-1,1\}^n$ and $c_\tau$ is a constant that depends only on $\tau$. This result resolves Talagrand's convolution conjecture up to a dimension-free $(\log \log \eta)^{\frac32}$ factor. Our proof uses the reverse heat process on the Boolean hypercube, a coupling construction with carefully engineered perturbations of jump rates and a time-smoothed anti-concentration estimate. 
\end{abstract}

\section{Introduction}
In 1989, Talagrand put forward a conjecture on the regularization effect of convolution on $L^1$ functions on the Boolean hypercube in Problems 1 and 2 of~\cite{talagrand1989conjecture}. Consider the Boolean hypercube $\braces{-1, 1}^n$. Given $t \geq 0$, let $\xi_t$ be a random vector on $\braces{-1, 1}^n$, whose coordinates $\xii_t$ are independent and identically distributed with
\begin{align*}
  \Prob(\xii_t = 1) = \frac{1+e^{-t}}{2}, \quad \Prob(\xii_t = -1) = \frac{1-e^{-t}}{2}.  
\end{align*}

Consider the \textit{heat semigroup} $(P_t)_{t \geq 0}$ defined for any function $f: \braces{-1, 1}^n \to \real$ by, for any $x \in \braces{-1,1}^n$,
\begin{align}
  \label{eq:heat_semigroup}
  P_t f(x) \defn \Exs\brackets{f(x \odot \xi_t)},
\end{align}
where $\odot$ denotes element-wise product. Let $\mu_t^n$ denote the law of $\xi_t$, then $P_t$ can also be written as a convolution $P_t f(x) = \int f(x \odot y) d\mu_t^n(y) = f \ast \mu_t^n$,
where the convolution operator $\ast$ is defined via the multiplicative group structure on $\braces{-1, 1}^n$. For this reason, the heat semigroup $P_t$ was referred to as ``convolution by a biased coin'' in~\cite{talagrand1989conjecture}. 

Write $\mu \defn \mu_\infty^n$, which is the uniform measure on $\braces{-1, 1}^n$. For $p \geq 1$, $f: \braces{-1, 1}^n \to \real$, define its $L^p(\mu)$-norm to be $\vecnorm{f}{p} \defn \parenth{ \int \abss{f}^p d\mu }^{\frac{1}{p}}$. Talagrand's convolution conjecture, also restated on his website~\cite{talagrand1989prizes}, precisely predicts the regularization effect of the heat semigroup when applied to $L^1(\mu)$ functions, as follows. 
\begin{conjecture}[Problem 2~\cite{talagrand1989conjecture}]
  \label{conj:talagrand}
  For any $\tau > 0$, there exists a constant $c_\tau > 0$ that depends only on $\tau$, such that for every nonnegative function $f: \braces{-1, 1}^n \to \real_+$ with $\vecnorm{f}{1} \neq 0$ and any $\eta > 1$, we have 
  \begin{align*}
    \Prob_{X \sim \mu}(P_\tau f(X) > \eta \vecnorm{f}{1} ) \leq \frac{c_\tau}{\eta \sqrt{\log \eta}}.
  \end{align*}
\end{conjecture}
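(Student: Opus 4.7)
My plan is to reduce the conjecture to a uniform bound on the super-level set $A_\eta := \{x : P_\tau f(x) > \eta \vecnorm{f}{1}\}$, i.e., to prove $\mu(A_\eta) \leq c_\tau/(\eta\sqrt{\log\eta})$. The natural starting point is a martingale representation: letting $(Y_t)_{t\in[0,\tau]}$ be the continuous-time heat Markov chain on the cube with $Y_0 = X \sim \mu$, the process $M_t := P_{\tau-t}f(Y_t)$ is a positive martingale with $M_0 = P_\tau f(X)$ and $M_\tau = f(Y_\tau)$. Reading this martingale from $t = \tau$ backwards in time gives the ``reverse heat'' viewpoint, where the trajectory progressively reveals $Y_0$ conditionally on $Y_\tau$. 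A direct Markov bound on $M_0$ only recovers the trivial rate $1/\eta$, so one must exploit finer information about the path.

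To push past Markov, I would introduce a perturbation of the reverse dynamics: augment each coordinate's flip intensity by an extra, $\sF_t$-adapted rate $r_t^{[i]}(Y_t)$ chosen in terms of the discrete derivatives of $P_{\tau-t}f$ at $Y_t$. Implemented via a Girsanov-type change of measure, the perturbation should (a) leave $M_t$ a martingale after an appropriate tilt, and (b) enforce a dimension-free upper bound on the logarithmic quadratic variation $\int_0^\tau (d\log M_t)^2$. Condition (b), combined with a Freedman-type inequality applied to $\log M_t$, yields a Gaussian-type tail on $\log M_0 - \log M_\tau$. A coupling of two independent perturbed trajectories started from arbitrary $x,y$ then provides the mechanism to transfer this tail to a level-set estimate: once the coupled trajectories meet at a common random time, the terminal values of $M$ agree, so the Gaussian tail on $\log M$ propagates to the value at the starting point. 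Optimizing over the perturbation scale should extract the desired $\sqrt{\log\eta}$ improvement over Markov.

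The main obstacle, I anticipate, is engineering the perturbation so that two competing constraints are simultaneously satisfied. It has to be strong enough that the two-point coupling succeeds within time $\tau$ with high probability (this governs the exponent of $\eta$), yet soft enough that $M_t$ remains a martingale with tightly bounded logarithmic quadratic variation (this governs the $\sqrt{\log\eta}$ factor). The natural scale appears to be of order $1/\sqrt{\log\eta}$, since this balances the Girsanov drift cost against the Gaussian window we exploit. The residual $\log\log\eta$ loss stated in the abstract I would attribute to an internal dyadic decomposition over this scale, or equivalently to a Bernstein-truncation step at the boundary of the tight-tail regime; closing that gap, so as to recover Talagrand's original rate, is precisely where the method seems hardest to sharpen.
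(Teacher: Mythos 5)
The statement you were asked to prove is Talagrand's Conjecture~\ref{conj:talagrand} itself, which remains open: the paper only establishes Theorem~\ref{thm:main}, a weakening with a dimension-free $\log\log\eta$ factor. Your proposal does not close this gap either --- indeed you concede at the end that your method would lose a $\log\log\eta$ factor and that removing it "is precisely where the method seems hardest to sharpen." So as a proof of Conjecture~\ref{conj:talagrand} the attempt is incomplete in exactly the place the conjecture is hard, and there is no proof in the paper to compare it against; at best the proposal targets the easier Theorem~\ref{thm:main}.

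Even for Theorem~\ref{thm:main}, several of the ingredients you lean on are exactly those the paper explains do \emph{not} work on the cube. You propose a Girsanov-type change of measure plus a bound on logarithmic quadratic variation of $\log M_t$. Remark~\ref{rmk:remark_after_lem2} addresses this head-on: a Girsanov/KL bound for jump processes requires controlling $\sum_i \bigl(S_i(\rho_t V_t) - S_i(\rho_t W_t)\bigr)^2$, and the discreteness of $\{-1,1\}^n$ makes the perturbed and unperturbed trajectories far apart in $L^2$, so the author was unable to obtain a good dimension-free bound of this form. The paper's construction is deliberately designed to avoid any such $L^2$ quantity: the closeness of the laws of $V_{T_o}$ and $W_{T_o}$ is established via a multi-stage Duhamel expansion (Lemma~\ref{lem:coupling_TV_distance_control}) that exploits the level-1 inequality (Lemma~\ref{lem:level_1_ineq}) for the post-switch smoothed test function, not via Girsanov or Freedman. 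Your "two-point coupling" that waits for two trajectories started from arbitrary $x,y$ to meet is also not the paper's coupling: the paper runs $V$ and $W$ from the same initial point and the same Poisson noise, perturbing only $W$'s jump rate by a state- and coordinate-dependent $\delta_i$, and compares $\log P_\tau f(V_{T_o})$ to $\log P_\tau f(W_{T_o})$ directly via the approximate monotone coupling of Lemma~\ref{lem:approx_monotone_coupling}. Finally, your proposed perturbation scale $\sim 1/\sqrt{\log\eta}$ is too large; the paper's $\bar\delta$ is of order $\log\log\eta/\log\eta$, which is what balances the total-variation cost $\bar\delta\sqrt{\log\eta}$ against the $1/\sqrt{\log\eta}$ probability loss in the monotone coupling.
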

The regularization effect refers to the gain of a $1/\sqrt{\log \eta}$ factor over Markov's inequality. In Talagrand's words, ``convolution by a biased coin spreads regularity''. In this paper, our main result proves this gain up to a dimension-free $\parenth{\log \log \eta}^{\frac32}$ factor. 

\begin{theorem}
  \label{thm:main}
  There exists a universal constant $c > 0$ such that for any $\tau > 0$, $n \geq 1$, $\eta > e^3$, and $f: \braces{-1, 1}^n \to \real_+$ with $\vecnorm{f}{1} \neq 0$, we have 
  \begin{align*}
    \Prob_{X \sim \mu}(P_\tau f(X) > \eta \vecnorm{f}{1} ) \leq \frac{c}{\parenth{1-e^{-\tau}}^2} \frac{\parenth{ \log \log \eta}^{\frac32} }{\eta \sqrt{\log \eta}}.
  \end{align*}
\end{theorem}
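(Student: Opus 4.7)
The plan is to encode $P_\tau f(X)$ as the value of a nonnegative exponential martingale arising from a reverse heat process on $\braces{-1,1}^n$, couple that martingale to a \emph{perturbed} reverse heat process whose associated martingale has bounded multiplicative jumps, and pay a $\log\log\eta$ overhead for the coupling. Normalize $\vecnorm{f}{1}=1$ and set $g \defn P_\tau f$, so $\int g\,d\mu=1$; the target is $\Prob_{X\sim\mu}(g(X)>\eta)$. The $\tau$-dependent prefactor $\max\braces{1, e^{-\tau}/(1-e^{-\tau})}$ enters through the coordinate-flip rate of the process.

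The starting point is the standard stochastic representation: let $(Y_s)_{s\in[0,\tau]}$ be the heat process on $\braces{-1,1}^n$ with $Y_0\sim\mu$, so that $L_s \defn P_{\tau-s}f(Y_s)$ is a nonnegative martingale with $L_0=g(Y_0)$, $L_\tau=f(Y_\tau)$, accumulating variability through coordinate-flip jumps. Passing to the time-reversed viewpoint via the reverse heat process $\rU_t \defn Y_{\tau-t}$, the evolution of $\rL_t \defn L_{\tau-t}$ can be written as a Dol\'eans--Dade exponential driven by the compensated Poisson coordinate-flips of $\rU$; the target probability becomes a tail bound for the nonnegative exponential martingale $\rL_\tau$.

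The obstacle is that multiplicative jumps of $\rL$ are generically unbounded, being governed by discrete derivatives of $P_\cdot f$, which can be enormous when $f$ concentrates. The $1/\sqrt{\log\eta}$ Gaussian tail in the conjecture is exactly what one would obtain if those multiplicative jumps were bounded. The strategy is therefore to couple $(\rU,\rL)$ to a perturbed reverse heat process $(\uV,\LV)$ whose multiplicative jumps are uniformly bounded (by an absolute constant), realized by freezing or slowing any coordinate whose flip would push the multiplicative increment past the threshold. Under the coupling, $\rL$ and $\LV$ agree outside a "freezing" event whose probability I aim to keep to $O(\log\log\eta / (\eta\sqrt{\log\eta}))$.

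For the perturbed martingale, a Bennett/Bernstein-type concentration on $\log \LV$, together with an $O(\log\eta)$ estimate on the quadratic variation of $\log \LV$ up to the first time it reaches level $\eta$ (obtained by optional stopping across level sets of the original $\rL$), yields a Gaussian tail $\Prob(\LV_\tau > \eta) \lesssim 1/(\eta\sqrt{\log\eta})$ after summing over dyadic levels in $\log\eta$. Transferring back via the coupling costs the freezing error and produces the theorem. The hard step is the perturbation itself: on the discrete hypercube one cannot damp a jump continuously as in a Girsanov perturbation of an SDE — each coordinate flip is either taken or suppressed — so the perturbation must be engineered on a carefully chosen grid of thresholds to simultaneously keep multiplicative jumps bounded, the freezing event rare enough to contribute only $\log\log\eta$, and the coupling analysis tractable. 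The $\log\log\eta$ gap to Talagrand's sharp conjecture is precisely the cost of this threshold discretization.
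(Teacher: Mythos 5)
Your proposal shares the paper's broad motifs (time-reversed heat process, coupling to a perturbed reverse process, pay a $\log\log\eta$ overhead for the perturbation), but it misidentifies both the obstruction and the perturbation mechanism, and the crucial concentration step is left unverified.

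First, the obstruction you name---that the multiplicative jumps of $\rL_t$ are ``generically unbounded''---does not arise in the paper's construction. The paper reverses over a \emph{long} horizon $T$ (of order $100\log n$) and only integrates the reverse process up to time $\To = T-\tau$; this keeps the scaled process $\rV_t = e^{-(T-t)}\uV_t$ inside the inner cube $[-e^{-\tau},e^{-\tau}]^n$, where the edge-ratio bound (Lemma~\ref{lem:edge_ratio}) already pins the multiplicative jump $1-2S_i$ uniformly in $\bigl[\tfrac{1-e^{-\tau}}{1+e^{-\tau}}, \tfrac{1+e^{-\tau}}{1-e^{-\tau}}\bigr]$. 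No freezing or truncation is needed; the bounded jump property comes for free from the leftover $\tau$ of smoothing. Relatedly, your claim that ``on the discrete hypercube one cannot damp a jump continuously as in a Girsanov perturbation'' is not accurate: the jump \emph{size} is indeed rigid ($-2V_{t-}e_i$), but the jump \emph{rate} is a real number and the paper perturbs it continuously by a state-dependent factor $1-\delta_i$ of order $1-\bdelta$. The actual difficulty, which the paper spells out in Remark~\ref{rmk:remark_after_lem2}, is that although the rate perturbation is small, the coupled paths $\uV,\uW$ become far apart in $L^2$ on the hypercube, so Girsanov/KL/Pinsker-style arguments do not close; the paper instead bounds TV via a multi-stage Duhamel expansion and a level-1 inequality.

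Second, your concentration step is a gap. You invoke Bennett/Bernstein on $\log\LV$ together with a quadratic-variation bound to get a Gaussian tail $1/(\eta\sqrt{\log\eta})$ for the perturbed martingale, and then claim the freezing event contributes only $\log\log\eta/(\eta\sqrt{\log\eta})$. Neither of these is substantiated: a bounded-difference martingale with quadratic variation $O(\log\eta)$ gives sub-Gaussian concentration of $\log\LV_\To - \log\LV_0$, but $\LV_0$ is itself random (law $\approx$ uniform), and this does not directly translate into the required anti-concentration of $\LV_\To$. The paper's actual mechanism is a monotone coupling argument: with $\bdelta\sim\log\log\eta/\log\eta$, it shows (Lemma~\ref{lem:approx_monotone_coupling}) that on the event $\{\log P_\tau f(\uW_\To)>\log\eta\}$, with high probability $\log P_\tau f(\uV_\To)>\log\eta+1$, so that $P_\tau f(\uV_\To)$ cannot sit in $(\eta, e\eta]$ unless either the TV cost (Lemma~\ref{lem:coupling_TV_distance_control}, of order $\bdelta\sqrt{\log\eta}$) or the exponential-martingale noise (of order $1/\sqrt{\log\eta}$) kicks in. The $\log\log\eta$ factor comes from balancing $\bdelta\sqrt{\log\eta}$ against the requirement that $\bdelta\log\eta\gtrsim\log\log\eta$ for the cumulative perturbation to overcome the noise; it is not the probability of a freezing event.

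To make progress you should drop the freezing/truncation perturbation and the direct Bernstein route, and instead (i) use the long reversal horizon so the jump ratios are bounded by the edge-ratio lemma, (ii) perturb the jump rate multiplicatively and couple through the shared Poisson noise, and (iii) prove anti-concentration by an approximate monotone coupling argument plus a TV bound, rather than by exponential concentration of the perturbed process.
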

While $1/\sqrt{\log \eta}$ is conjectured to be the extra factor, Talagrand also mentioned in~\cite{talagrand1989prizes} that ``I do not know if it is true that $\lim_{\eta \to \infty} \psi(\eta) = 0$'', where $\psi(\eta) \defn \sup_{n \in \naturalnum} \sup_{f \geq 0} \braces{\eta \Prob(P_\tau f \geq \eta \vecnorm{f}{1})}$. This is the statement of Problem 1 in~\cite{talagrand1989conjecture}. Theorem~\ref{thm:main} is the first dimension-free result which confirms $\lim_{\eta \to \infty} \psi(\eta) = 0$, resolving Talagrand's Problem 1. 

\begin{remark}
    \label{re:thm_main}
    Without loss of generality, by rescaling, we may assume $\vecnorm{f}{1} = 1$. Additionally, it suffices to prove the case for strictly positive functions first, and then obtain the general case by passing to the limit.
\end{remark}
Before we proceed to the main proofs, we review related classical results and summarize prior progress on this conjecture.

\subsection{A few comments on the conjecture}
First, to bound the tail probability of $P_\tau f(X) > \eta \vecnorm{f}{1}$, a natural bound is given by Markov's inequality. Since $\Exs_{X \sim \mu}[P_\tau f(X)] = \Exs_{X\sim \mu}[f(X)] = \vecnorm{f}{1}$, Markov's inequality gives
\begin{align}
  \label{eq:Markov_ineq}
  \Prob_{X\sim \mu} (P_\tau f(X) > \eta \vecnorm{f}{1}) \leq \frac{1}{\eta}. 
\end{align}
This bound holds for any $\tau \geq 0$. Talagrand's conjecture asserts that, as long as $\tau > 0$, this bound can be improved by an additional $1/\sqrt{\log \eta}$ factor.

Second, from Eq.~\eqref{eq:Markov_ineq}, one immediately obtains a bound with an extra $1/\sqrt{\log \eta}$ factor if a dimension-dependent constant is allowed. 
\begin{align*}
  \Prob_{X\sim \mu} (P_\tau f(X) > \eta \vecnorm{f}{1}) \leq \frac{1}{\eta} = \frac{\sqrt{\log \eta}}{\eta \sqrt{\log \eta}} \overset{(i)}{\leq} \frac{\sqrt{n \log 2}}{\eta \sqrt{\log \eta}}. 
\end{align*}
(i) follows from the trivial bound $f \leq 2^n \vecnorm{f}{1}$ (see Lemma~\ref{lem:edge_ratio}), as well as the fact that if $\eta > 2^n$, then the probability on the left-hand side is $0$. Hence, allowing a dimension-dependent constant factor in Conjecture~\ref{conj:talagrand} is not very interesting, unless the constant is much smaller than $\sqrt{n}$.

Third, as shown in Proposition 5 of~\cite{talagrand1989prizes}, if one aims for a uniform dimension-free tail bound which holds for any $f \geq 0$ and any $\eta > 2$, then the best bound one can hope for is $1/(\eta \sqrt{\log \eta})$ up to a constant that depends on $\tau$. Talagrand showed this lower bound in \cite{talagrand1989prizes} by considering the product function $f(x) = \prod_{i=1}^n(1+x_i)$ and a specific threshold $\eta$. Whether this lower bound is tight remains unknown.

Fourth, one may ask why one normalizes by $\vecnorm{\cdot}{1}$ rather than with $\vecnorm{\cdot}{p}$ for $p>1$. As pointed out in~\cite{talagrand1989conjecture}, as long as $p>1$, $P_\tau$ is already known to be a regularizing operator. Specifically, hypercontractivity for the uniform measure on $\braces{-1,1}^n$ implies that, for $ q = 1 + e^{2\tau} (p-1)$, $\vecnorm{P_\tau f}{q} \leq \vecnorm{f}{p}$. Then Markov's inequality gives
\begin{align*}
  \Prob(P_\tau f(X) > \eta \vecnorm{f}{p}) \leq \frac{1}{\eta^q},
\end{align*}
which is better than $1/(\eta \sqrt{\log \eta})$ for large $\eta$ as long as $p > 1$. While hypercontractivity is powerful for $p>1$, it does not yield information about the action of $P_\tau$ on $L^1$ functions. This observation is one main reason that Talagrand formulated the conjecture: to clarify the regularizing properties of $P_\tau$ on $L^1$ functions. 

Finally, as explained in Section 1 of~\cite{eldan2018regularization}, Conjecture~\ref{conj:talagrand} is closely related to isoperimetric inequalities with extra logarithmic factors and to the geometry of small sets via a duality argument.

\subsection{Remarks on the Gaussian counterpart}
In an attempt to attack Conjecture~\ref{conj:talagrand}, a natural Gaussian counterpart of the conjecture, related to the Ornstein-Uhlenbeck (OU) semigroup, was formulated in~\cite{ball2013l1} and has since been studied. The Gaussian results therefore serve as an important point of comparison.

Let $\gamma_n$ denote the standard Gaussian measure in $\real^n$ with density $x \mapsto \frac{1}{(2\pi)^{n/2}} \exp(-\vecnorm{x}{2}^2/2)$, where $\vecnorm{\cdot}{2}$ denotes the Euclidean norm. For $p \geq 1$, let $L^p(\gamma_n) \defn \braces{f: \real^n \to \real \mid \int \abss{f}^p d\gamma_n < \infty}$. Given $f \in L^1(\gamma_n)$, the OU semigroup is defined by Mehler's representation as
\begin{align*}
  P_t^{\text{OU}} f(x) \defn \int f(e^{-t}x + \sqrt{1-e^{-2t}} y) d\gamma_n(y), \quad x \in \real^n, t \geq 0. 
\end{align*}
The Gaussian counterpart of the conjecture asks whether it is true that for every nonnegative function $f: \real^n \to \real_+$ with $\vecnorm{f}{1} \neq 0$, and any $\eta > 1$, we have 
\begin{align}
  \label{eq:Gaussian_conj}
  \Prob_{X \sim \gamma_n}(P^{\text{OU}}_\tau f(X) > \eta \vecnorm{f}{1} ) \leq \frac{c_\tau}{\eta \sqrt{\log \eta}},
\end{align}
for a constant $c_\tau$ that depends only on $\tau > 0$. In words, the Gaussian counterpart replaces the heat semigroup on the Boolean hypercube with the OU semigroup, and the uniform measure on the Boolean hypercube with the standard Gaussian. 
The Gaussian counterpart is fully resolved. Ball, Barthe, Bednorz, Oleszkiewicz and Wolff first formulated this problem in~\cite{ball2013l1} and provided a bound with a constant $c_{\tau, n}$ that depends on the dimension $n$ and an extra $\log \log \eta$ factor in the numerator. Eldan and Lee, using F\"ollmer's process and a coupling construction via stochastic calculus, proved a dimension-free bound up to an extra $\sqrt{\log\log \eta}$ factor in~\cite{eldan2018regularization}. Following the main ideas in \cite{eldan2018regularization}, Lehec refined their stochastic analysis to remove the extra $\sqrt{\log\log \eta}$ factor in~\cite{lehec2016regularization}, fully resolving the Gaussian counterpart.

Both~\cite{eldan2018regularization} and~\cite{lehec2016regularization} crucially use the second-order smoothness (or semi-log-convexity) generated by the OU semigroup, as follows: for any $f:\real^n \to \real_+$ and $t > 0$, we have
\begin{align*}
  \nabla^2 \log P_t^{\text{OU}} f \succeq -\frac{1}{2t} \Ind_n.
\end{align*}
In fact, this semi-log-convexity is the only role of the OU semigroup in resolving the Gaussian counterpart of the conjecture. It is a well-known fact that the Gaussian counterpart of the conjecture follows from Talagrand's Conjecture~\ref{conj:talagrand} by the central limit theorem. On the other hand, whether the Gaussian proof is helpful for proving Conjecture~\ref{conj:talagrand} is unknown.

There are several recent works that extend the success in the Gaussian setting to other measures and/or other semigroups. \cite{gozlan2019deviation} gave an alternative proof of the Gaussian counterpart in dimension $1$.  \cite{gozlan2023log} studied another continuous analogue by replacing the OU semigroup with perturbations of the OU semigroup in dimension $1$, as well as other variants for the $M/M/\infty$ queuing process on the integers and Laguerre semigroup in dimension $1$.

\paragraph{Notation} Let $[n]$ denote the set $\braces{1, 2, \ldots, n}$. $(e_1, e_2, \ldots, e_n)$ denotes the canonical basis of $\real^n$. For $a \in \real$, $(a)_+ = \max\braces{a, 0}$. For $a, b \in \real$, $a \wedge b \defn \min\braces{a, b}$. Let $\odot$ denote the element-wise product. For $x \in \real^n$, $\flip_i(x) \defn (x_1, \ldots, x_{i-1}, -x_i, x_{i+1}, \ldots )$ is the vector obtained by flipping the $i$-th coordinate. For $\phi: \braces{-1,1}^n \to \real$, let $\Delta_i \phi$ denote the ``flip $i$-th coordinate and take difference'' function $x\mapsto \phi(\flip_i(x)) - \phi(x)$. For $h: \braces{-1,1}^{2n} \to \real$, define $\Delta_i^x h$ as $(x, y) \mapsto h(\flip_i(x), y) - h(x, y)$, $\Delta_i^y h$ as $(x, y) \mapsto h(x, \flip_i(y)) - h(x, y)$ and $\Delta_i^{xy} h$ as $(x, y) \mapsto h(\flip_i(x), \flip_i(y)) - h(x, y)$. Denote $\real_+ \defn [0,\infty)$.  For $x\in \real$, 
\begin{align*}
  \sign(x) \defn \begin{cases}
    +1 & \text{ if } x > 0\\
    0 & \text{ if } x = 0\\
    -1 & \text{ otherwise }.
  \end{cases}
\end{align*}
For $x \in \real^n$, $\sign(x)$ is the vector obtained by applying $\sign$ coordinate-wise. For a function $\psi: \real \to \real$, $\supp(\psi)$ denotes the support of $\psi$. We use $\Ent$ to denote the entropy, $\Ent_\mu g = \Exs_\mu[g \log g] - \Exs_\mu[g] \log \Exs_\mu [g]$. We write $a \lesssim b$ if there exists a universal constant $c > 0$ such that $a \leq c b$. The constants $c$ and $c'$ denote universal constants, which may change depending on the context.

\section{Preliminaries}
We introduce a few facts about Boolean function analysis, general continuous-time Markov processes and the heat semigroup on the Boolean hypercube.

\subsection{Boolean function analysis basics}
We recall a few standard concepts without proof. For a detailed exposition on Boolean function analysis, we refer the interested readers to the book~\cite{o2014analysis}. 

\paragraph{Inner product for functions on $\braces{-1,1}^n$.} For a pair of functions $g, h: \braces{-1,1}^n \to \real$, we define their inner product as
\begin{align*}
  \angles{g, h} \defn \int g(x) h(x) d\mu(x) = \Exs_\mu[g h ]. 
\end{align*}

\paragraph{Multilinear expansion of a function on $\braces{-1,1}^n$} For a subset $S \subseteq [n]$, define the monomial corresponding to $S$ as $
  x^S \defn \prod_{i \in S} x_i$, 
with $x^{\emptyset} = 1$ by convention. With the above inner product, $\braces{x^S \mid S \subseteq [n]}$ form an orthonormal basis for the vector space of functions $\braces{-1, 1}^n \to \real$ (see e.g. Theorem 1.5 in~\cite{o2014analysis}). Additionally, every function $g: \braces{-1,1}^n\to \real$ can be uniquely expressed as a multilinear polynomial, 
\begin{align}
  \label{eq:multilinear_expansion}
  g(x) = \sum_{S \subseteq [n]} \widehat{g}(S) x^S,
\end{align} 
where $\widehat{g}(S) \defn \angles{g, x^S}$ is called the Fourier coefficient of $g$ on $S$ (see e.g. Theorem 1.1 in~\cite{o2014analysis}). With the multilinear polynomial representation, $g$ can be identified with a multilinear function on $\real^n$. This also allows us to define the derivative of $g$ in the usual sense, for $i \in [n]$
\begin{align*}
  \partial_i g(x) \defn \lim_{h \to 0}\frac{g(x+h e_i) - g(x)}{h} = \frac{g(\ldots, x_{i-1}, +1, x_{i+1}, \ldots) - g(\ldots, x_{i-1}, -1, x_{i+1}, \ldots)}{2}. 
\end{align*}
Moreover, the zeroth Fourier coefficient satisfies $\hat{g}(\emptyset) = \vecnorm{g}{1} = \int g d\mu = g(0)$ for nonnegative $g$. 

\paragraph{Explicit form of the heat semigroup} Using the multilinear expansion, the heat semigroup~\eqref{eq:heat_semigroup} can be written as
\begin{align}
  \label{eq:explicit_heat_seimgroup}
  P_t g (x) &= \int g(x \odot y) d\mu_t^n(y) = \sum_{y \in \braces{-1,1}^n} \sum_{S \subseteq [n]} \hat{g}(S) x^S y^S \prod_{i \in [n]} \parenth{\frac{1+y_i e^{-t}}{2}} \overset{(i)}{=} g(e^{-t} x).
\end{align}
(i) follows by summing over $y$ first. Vaguely speaking, applying the heat semigroup ``moves'' the input space from $\braces{-1,1}^n$ to the inner hypercube $\braces{-e^{-t}, e^{-t}}^n$, which may make the multilinear polynomial more regular.

\subsection{Continuous-time Markov processes}
\label{sub:continuous_time_Markov}
In this subsection, we introduce a few concepts related to Markov processes and Markov semigroups. Interested readers are referred to \cite{anderson2012continuous} for continuous-time Markov chains and~\cite{pazy2012semigroups} for a semigroup viewpoint for additional details. All Markov processes considered are c\`adl\`ag (right continuous with left limits).

Given a Markov process $(X_t)_{t \geq 0}$, taking values in a finite space $\Omega$.

\paragraph{Homogeneous case} When $(X_t)_{t \geq 0}$ is time-homogeneous (i.e., the law of $X_t \mid X_s = x$ stays the same as $X_{t-s} \mid X_0 = x$, $\forall t\geq s \geq 0$), we define the associated \textit{Markov semigroup} $Q_t$ as 
\begin{align*}
  Q_0 &= \Ind \\
  Q_t h(x) &= \Exs[h(X_t) \mid X_0 = x],
\end{align*}
for every bounded function $h: \Omega \to \real$, and every $x \in \Omega$. It also acts on measures through the duality $\int h d (\nu Q_t) = \int Q_t h d\nu$. Its \textit{generator} is defined as
\begin{align*}
  L h (x) \defn \lim_{t \to 0^+} \frac{Q_t h (x) - h(x)}{t},
\end{align*}
provided the limit exists. 
A measure $\nu$ on $\Omega$ is said to be \textit{invariant} for $(Q_t)$ if $\int Q_t h d\nu = \int h d \nu, \forall h, \forall t \geq 0$. The Markov property guarantees that it satisfies the semigroup property, namely, $Q_{t+s} = Q_t Q_s, \forall 0 \leq s \leq t$.

\paragraph{Inhomogeneous case} When $X_t$ is time-inhomogeneous, we work with a two-parameter semigroup (also called evolution system) $(Q_{s, t})_{0 \leq s \leq t}$ acting on bounded functions $\Omega \to \real$, such that
\begin{align*}
  Q_{s, s} &= \Ind \\
  Q_{s, t} h(x) &= \Exs[h(X_t) \mid X_s = x].  
\end{align*}
The Markov property guarantees that it satisfies the two-parameter semigroup property $Q_{s,t} = Q_{s,r}Q_{r,t}, 0 \leq s\leq r \leq t$. Additionally, on a finite space $\Omega$, $Q_{s, t}$ can be viewed as a matrix of size $\abss{\Omega} \times \abss{\Omega}$. 
Its time-dependent generator $L_t$ is defined via the Kolmogorov forward and backward equations (if the partial derivative with respect to $t$ is well-defined): 
\begin{align*}
  \partial_t Q_{s, t} = Q_{s, t} L_t , \quad \partial_s Q_{s, t}  = - L_s Q_{s,t}.
\end{align*}

\subsection{Heat semigroup and its associated Markov process}
In the heat semigroup definition in Eq.~\eqref{eq:heat_semigroup}, $\xi_t$ only specifies one-time marginals of the process. We introduce a Markov process which fulfills the one-time marginals via a jump stochastic differential equation (SDE), making the name ``heat semigroup'' consistent with our semigroup definition in Section~\ref{sub:continuous_time_Markov}. 

Let $f$ be a function $\braces{-1,1}^n \to (0, \infty)$ with $\vecnorm{f}{1} = 1$. Let $E \defn \real^2$, equipped with Lebesgue measure. Consider a filtered probability space $\parenth{\Omega, \sFU, \parenth{\sFU_t}_{t \geq 0}, \Prob}$ carrying the following objects.
Let $\fU_0 \sim \nu_f$, where $\nu_f \defn f \cdot \mu$ is a probability measure with density $f$ with respect to the uniform measure $\mu$ on $\braces{-1, 1}^n$. Independent of $\fU_0$, let $N^U$ be a Poisson random measure (PRM) on $\real_+ \times E$ with intensity $dt dz_1 dz_2$. Let $(\sFU_t)_{t \geq 0}$ be the natural filtration generated by $\fU_0$ and $N^U$. Let $\widetilde{N}^U$ be the associated compensated PRM. For $i \in [n]$, define the coordinate PRM
\begin{align*}
  \NUi(dt, dz) \defn N^U(dt,  (i, i+1] \times dz).
\end{align*}
Then $N^{U, [1]}, \ldots, N^{U, [n]}$ are $n$ independent PRMs on $\real_+ \times \real$ with intensity $dt dz$. Their compensated versions are $\widetilde{N}^{U, [1]}, \ldots, \widetilde{N}^{U, [n]}$.

\paragraph{Forward jump process} We define the forward jump process through the following jump SDE,
\begin{align}
  \label{eq:def_forward_process}
  \fU_0 &\sim \nu_f = f \cdot \mu, \notag \\
  d\fU_t &\defn \sum_{i=1}^n (-2 \fU_{t-} e_i) \int_{\real}\mathbf{1}_{0 < z < \frac12} \NUi(dt, dz).
\end{align}

For any $T > 0$, existence and uniqueness of the solution of the SDE on $[0, T]$ follow from the bounded total jump rate for pure-jump processes (see e.g., Chapter 4.7~\cite{ethier2009markov}). Since each coordinate of $\fU_t$ evolves independently, the distribution of $\fU_t$ is determined by its one-coordinate distributions. The first coordinate flips according to a Poisson process of rate $1/2$. Its conditional mean has an explicit form, after solving an ordinary differential equation (ODE), 
\begin{align*}
  \Exs\brackets{\fU_t^\tagone \mid \fU_0^\tagone = x} = x e^{-t}. 
\end{align*}
Then the transition probability follows since $\fU_t^\tagone$ only takes value in $\braces{-1,1}$: $\Prob(\fU_t^\tagone = 1 \mid \fU_0^\tagone = x) = \frac{1+xe^{-t}}{2}$. In other words, $\fU_t \mid \fU_0 = x$ matches the distribution of $x \odot \xi_t$. Hence, the semigroup associated to $(\fU_t)_{t\geq 0}$ is exactly the heat semigroup $(P_t)_{t\geq 0}$ defined in Eq.~\eqref{eq:heat_semigroup}. 

Next, we describe its generator and its marginal law. Its generator takes the form, for any test function $h: \braces{-1,1}^n \to \real$,
\begin{align}
  \label{eq:heat_semigroup_generator}
  \fL h(x) = \frac{1}{2} \sum_{i=1}^n \Delta_i h(x). 
\end{align}
In matrix form, $\fL(x, y) = \frac12 \sum_{i=1}^n \mathbf{1}_{y = \flip_i(x)}$, \text{ for } $x \neq y \in \braces{-1, 1}^n$. Additionally, the uniform measure $\mu$ is the invariant measure for $(P_t)_{t\geq 0}$. For time $t > 0$, the one-time marginal of $\fU_t$ is $p_t \defn \nu_f P_t$, which has an explicit form $p_t(x) = f(e^{-t} x) \mu(x)$. Conceptually, as $t$ goes from $0$ to $\infty$, one may think of the process $(\fU_t)_{t\geq 0}$ as creating a stochastic bridge from the probability measure $\nu_f$ to the uniform measure $\mu$.

\subsection{Time reversal of a Markov process}
Given a Markov process $(X_t)$ on a finite space $\Omega$ and its associated semigroup $(Q_{s, t})_{0 \leq s \leq t}$ with generator $L_t$. Let $\pi_t = \pi_0 Q_{0, t}$ be a family of laws of $X_t$. The \textit{time-reversed semigroup} (with respect to $(\pi_t)$) is defined as
\begin{align*}
  \widetilde{Q}_{s,t}(y, x) \defn \frac{\pi_s(x)}{\pi_t(y)} Q_{s, t} (x, y), \forall s \leq t, \forall x, y \in \Omega.
\end{align*}
Its reversed generator satisfies the \textit{flux equation}
\begin{align*}
  \tilde{L}_t(y, x) &= \frac{\pi_t(x)}{\pi_t(y)} L_t(x, y), \text{ if } x \neq y, \\
  \tilde{L}_t(y, y) &= - \sum_{x \neq y} \tilde{L}_t(y, x).
\end{align*}

The time reversal has a pathwise interpretation. Define $\widetilde{X}_{t} \defn X_{T-t}$ for $t \in [0, T]$. Then $\widetilde{X}_t$ is a Markov process with initial law $\pi_T$, with $\parenth{\widetilde{Q}_{T-t,T-s}}$ as its associated semigroup and $\parenth{\tilde{L}_{T-t}}$ as its generator. This pathwise interpretation identifies the law of the reversed process. To define the reverse process properly, we need to realize it on a reverse filtered probability space with its own PRMs. See e.g., Chapter 7~\cite{anderson2012continuous} for more details about time-reversed Markov processes. 

\subsection{Time reversal of the heat process}
Using the time reversal formula, we identify the law of the time reversal of the forward heat process in Eq.~\eqref{eq:def_forward_process}. The time reversal of the forward process over the interval $[0, T]$ has initial law $\nu_{P_T f}$. We can realize it on a reverse filtered probability space 
\begin{align*}
  \parenth{\Omega, \sF, \parenth{\sF_t}_{t \in [0, T]}, \Prob}
\end{align*}
carrying the following objects. Let $\uV_0 \sim \nu_{P_T f}$. Independent of $\uV_0$, let $N^{[1]}, \ldots, N^{[n]}$ be $n$ independent PRMs on $\real_+ \times \real$ with intensity $dt dz$. Let $(\sF_t)_{t \in [0, T]}$ be the natural filtration generated by $\uV_0$ and these PRMs. Let $\wNi$ be the associated compensated PRMs. From this point onward, unless explicitly stated otherwise, all stochastic objects are defined on this reverse filtered probability space.

\paragraph{Reverse heat process} Define $(\uV_t)_{t \in [0, T]}$ as the reverse heat process, which satisfies the following SDE
\begin{align}
  \label{eq:def_reverse_process}
  \uV_0 &\sim \nu_{P_T f}, \notag \\
  d \uV_t &=  \sum_{i=1}^n (-2 \uV_{t-} e_i) \int_\real \mathbf{1}_{0 < z \leq \frac12 - S_i(e^{-{(T-t)}} \uV_{t-})} N^\tagi(dt, dz),
\end{align}
where the \textit{score function} for the $i$-th coordinate $S_i: [-1,1]^n \to \real$, when $f$ is identified with its multilinear expansion, is defined as
\begin{align}
  \label{eq:def_score_fun}
  S_i(x) \defn \frac{x_i \partial_i f (x)}{ f(x)}, \quad \forall i \in [n].
\end{align}
The reverse jump rate is derived as the unique solution to the flux equation. 
Recall that for the heat semigroup, the marginal law is $p_t(x) = f(e^{-t} x) \mu(x)$ and $\fL(x, y) = \frac12 \sum_{i=1}^n \mathbf{1}_{y = \flip_i(x)}$. The flux equation provides an explicit form of the generator, for any function $h:\braces{-1,1}^n \to \real$,
\begin{align*}
  \LV_t h(x) = \frac12 \sum_{i=1}^n \frac{f(\flip_i(e^{-(T-t)} x))}{f(e^{-(T-t)} x)}  \Delta_i h(x).
\end{align*}
Since $f > 0$ and the space $\braces{-1,1}^n$ is finite, the jump rate is finite on $[0, T]$. By construction, the law of $(\uV_t)_{t \in [0, T]}$ is the same as the law of $(\fU_{T-t})_{t \in [0, T]}$ from the forward process. Existence and uniqueness of the solution of the SDE follow from the bounded total jump rate for pure-jump processes (see e.g., Chapter 4.7~\cite{ethier2009markov}).

Unlike the forward process, the reverse heat process is time-inhomogeneous due to its time-dependent jump rates. In view of the jump rate, it is natural to introduce the \textit{scaled reverse process}
\begin{align}
  \label{eq:def_scaled_reverse_jump_process}
  \rV_t \defn e^{-(T-t)} \uV_t, 
\end{align}
which lives in the inner cube $[-1,1]^n$ and becomes a time-homogeneous Markov process. 
\begin{remark}
  In light of the recent trendy development on score-based diffusion models for sampling high-dimensional natural images~\cite{song2021score} via time-reversal of Ornstein-Uhlenbeck process, we note that $(\uV_t)_{t \in [0, T]}$ is the Boolean hypercube analogue of the Gaussian diffusion models. Here, the Gaussian noise is replaced by Poisson jumps. 

  Note that, as a vector, $S(\rV_t) = \uV_t \odot \nabla \log P_{T-t} f (\uV_t)$. Its resemblance to F\"ollmer's drift or the score function in the Gaussian setting~\cite{eldan2018regularization} motivates us to name it the ``score function'', too. It is known that F\"ollmer's drift is a martingale in the Gaussian setting (see e.g. Section 2~\cite{eldan2018regularization}). The score $S(\rV_t)$, in our definition, is also a martingale, which we show in the Appendix (Lemma~\ref{lem:score_martingale}).
\end{remark}

\section{Main proof}
\label{sec:main_proof}
Fix $\tau > 0$, $T > \tau + 1$, $\To \defn T - \tau$ and $\btau = \frac{1+e^{-\tau}}{1-e^{-\tau}}$. For $t \in [0, T]$, let $\rho_t = e^{-(T-t)}$. By Remark~\ref{re:thm_main}, we restrict ourselves to a strictly positive function $f: \braces{-1, 1}^n \to (0, \infty)$ with $\vecnorm{f}{1} = 1$ and fix it throughout. To prove Theorem~\ref{thm:main}, the high-level proof strategy is to construct a coupling to prove an anti-concentration bound, which is inspired by the strategy in the Gaussian counterpart~\cite{eldan2018regularization}. However, the coupling construction and technical details differ significantly due to the discrete nature of the Boolean hypercube $\braces{-1,1}^n$. For an interval $I \subseteq \real$, define the anti-concentration profile
\begin{align}
  \label{eq:def_anti_concentration_profile}
  \fA_t(I) \defn \Prob_{Y \sim \nu_{P_t f} } \parenth{\log P_t f(Y) \in I},
\end{align}
where $\nu_{P_t f}$ is the probability measure on $\braces{-1,1}^n$ with density $P_t f$ with respect to the uniform measure $\mu$.
The first step is to reduce the tail bound in Theorem~\ref{thm:main} to a bound on the anti-concentration profile.

\begin{lemma}[Anti-concentration profile]
  \label{lem:anti_concentration}
  For a strictly positive function $f : \{-1,1\}^n \to (0,\infty)$ with $\vecnorm{f}{1} = 1$, for any $\tau > 0$, there exists a universal constant $c > 0$, such that for any $\eta > e^{8}$, 
  \begin{align*}
    \fA_\tau((\log \eta, \log(\eta) + 1]) 
    &\leq \frac{c}{\parenth{1-e^{-\tau}}^2} \frac{\parenth{\log \log \eta}^{\frac32}}{\sqrt{\log \eta}}.
  \end{align*}
\end{lemma}

Theorem~\ref{thm:main} directly follows from Lemma~\ref{lem:anti_concentration}, using the same argument as in the Gaussian space (see Section 2~\cite{eldan2018regularization}). A short proof is provided for completeness. 
\begin{proof}[Proof of Theorem~\ref{thm:main}]
  Let $c_\tau = \frac{c}{\parenth{1-e^{-\tau}}^2}$ for $c$ in Lemma~\ref{lem:anti_concentration}. For $\eta > e^8$, we have 
  \begin{align*}
    \Prob_{X\sim \mu}(P_\tau f (X) > \eta) &= \sum_{k=0}^\infty \Prob_{X\sim \mu}(P_\tau f (X) \in (e^k \eta, e^{k+1} \eta]) \\
    &\leq \sum_{k=0}^\infty \frac{1}{e^k \eta} \Exs _{X\sim \mu} \brackets{P_\tau f (X) \mathbf{1}_{ P_\tau f (X) \in (e^k \eta, e^{k+1} \eta]} } \\
    &= \sum_{k=0}^\infty \frac{1}{e^k \eta} \fA_\tau((\log (e^{k} \eta), \log(e^k \eta) + 1]) \\
    &\overset{(i)}{\leq} \sum_{k=0}^\infty \frac{1}{e^k \eta} c_\tau \frac{\parenth{\log \log (e^k \eta)}^{\frac32}}{\sqrt{\log (e^k\eta)}} \\
    &\overset{(ii)}{\leq}  2 c_\tau \frac{ \parenth{\log \log (\eta)}^{\frac32} }{\eta \sqrt{\log (\eta)}} \sum_{k=0}^\infty \frac{1}{e^k} \\
    &=  \frac{2 c_\tau e}{e-1} \frac{ \parenth{\log \log (\eta)}^{\frac32} }{\eta \sqrt{\log (\eta)}}.
  \end{align*}
  (i) follows from Lemma~\ref{lem:anti_concentration}. Let $h: x\mapsto \parenth{\log \log (x)}^{\frac32}/ \sqrt{\log (x)}$. (ii) follows from the fact that $h(\eta) \geq \frac{1}{2} \max_{x > \eta} h$ for $\eta > e^8$. Since we only care about the result up to universal constants, for $e^3 < \eta \leq e^{8}$, it suffices to use the bound from Markov's inequality. To deal with general nonnegative functions with $\vecnorm{f}{1} \neq 0$, we can apply the same proof to $(f + \varepsilon)/(\vecnorm{f}{1} + \varepsilon)$ for $\varepsilon > 0$ and then take the limit $\varepsilon \to 0^+$.
\end{proof}

The second step is to construct a coupling via the reverse heat process to prove Lemma~\ref{lem:anti_concentration}. We seek a coupling of two processes $(\uV_t, \uW_t)$ such that, marginally at time $\To$, $\Law(\uV_{\To})$ (which is $\nu_{P_\tau f}$) and $\Law(\uW_{\To})$ satisfy the following two properties:
\begin{enumerate}
    \item (TV distance control) $\Law(\uV_{\To})$ and $\Law(\uW_{\To})$ are close in total variation (TV);
    \item (Approximate monotonicity at tail) with high probability, $\log P_\tau f(\uV_{\To})$ is strictly larger than $\log P_\tau f(\uW_{\To})$ by $1$, given that $P_\tau f(\uW_{\To}) > \eta$. 
\end{enumerate}
Combining the above two properties, we would obtain a bound on the anti-concentration profile $\fA_\tau((\log \eta, \log(\eta) + 1])$. While this coupling idea is inspired by the Gaussian proof~\cite{eldan2018regularization}, there are two main challenges in the Boolean setting. First, the design of the coupling is more involved as we cannot perturb the drift directly as before because such a perturbation would leave the Boolean hypercube $\braces{-1,1}^n$. Second, even if such a coupling construction is obtained, it is not clear how to control the TV distance and the approximate monotonicity at tail at the same time in a dimension-free way as Boolean specific obstructions arise.

The main novelty of our proof lies in a careful design of the coupling construction and an analysis of its properties beyond what is achievable with standard Gaussian techniques. 
Specifically, in addition to the TV distance control and the approximate monotonicity at tail, we introduce a time-smoothed anti-concentration profile bound, which is crucial for obtaining a dimension-free bound on the anti-concentration profile. We show that these three properties together are sufficient to prove Lemma~\ref{lem:anti_concentration}. 

Let $\theta \in [0, \To)$ be the starting time of the perturbation in our coupling construction. Let $\excess \defn \frac{1}{2} \log \log \eta + 1 > 0$, $R_\theta \defn [\log \eta - \log P_{T-\theta} f(\uV_\theta)]_+$ be the remaining gap at time $\theta$, and $I_\theta \defn \Exs\brackets{\frac{\mathbf{1}_{R_\theta \geq \excess}}{R_\theta + 1}}$ be the average inverse remaining gap at time $\theta$ when the remaining gap is larger than $\excess$, where the expectation is taken with respect to $\uV_\theta$.

\begin{lemma}[TV distance control]
  \label{lem:coupling_TV_distance_control}
  Let $\eta > e^3$. For a strictly positive function $f : \{-1,1\}^n \to (0,\infty)$ with $\vecnorm{f}{1} = 1$, for $(\uV_t)_{t \in [0, T]}$ defined in Eq.~\eqref{eq:def_reverse_process}, there exists a coupling construction $(\uV_t, \uW_t)_{t \in [\theta, T]}$, described in Section~\ref{sub:main_stochastic_construction}, such that
  \begin{align*}
    \tvdist{\Law(\uV_{\To})}{\Law(\uW_{\To})} \lesssim \btau^2 \excess^2 I_\theta + \sqrt{\btau^2 \excess^2 I_\theta} \sqrt{1 + \frac{e^{-2\tau}}{1-e^{-2\tau}} (\To - \theta)}.
  \end{align*}
\end{lemma}
Lemma~\ref{lem:coupling_TV_distance_control} ensures that the two random variables $\uV_{\To}$ and $\uW_{\To}$ are close in TV distance by the coupling construction. Its proof is in Section~\ref{sub:proof_lem:coupling_TV_distance_control}.

\begin{lemma}[Approximate monotonicity at tail]
  \label{lem:approx_monotone_coupling}
  Let $\eta > e^3$. The same coupling construction in Lemma~\ref{lem:coupling_TV_distance_control} satisfies
  \begin{align*}
    \Prob\parenth{\log P_{\tau}f(\uV_\To) > \log \eta + 1}
    \geq \Prob\parenth{\log P_\tau f(\uW_\To) > \log \eta}  - \fA_{T-\theta}((\log \eta - \excess, \log \eta + \excess]) - \frac{3}{\sqrt{\log \eta}}.
  \end{align*}
\end{lemma}
Lemma~\ref{lem:approx_monotone_coupling} shows that $\log P_\tau f(\uV_\To)$ is slightly larger than $\log P_\tau f(\uW_\To)$ at the tail level $\log \eta$ by the coupling construction. Its proof is in Section~\ref{sub:proof_lem:approx_monotone_coupling}.

\begin{lemma}[Time-smoothed anti-concentration profile bound]
  \label{lem:anticoncentration_profile_bound}
  For a strictly positive function $f : \{-1,1\}^n \to (0,\infty)$ with $\vecnorm{f}{1} = 1$, let $\fA_t$ be the anti-concentration profile defined in Eq.~\eqref{eq:def_anti_concentration_profile}. 
  For any $\ell > 2$, we have
  \begin{align*}
    \int_{0}^{\infty} \fA_s((\ell, \ell+1]) ds \lesssim \frac{1}{\ell}.
  \end{align*}
  
\end{lemma}
Lemma~\ref{lem:anticoncentration_profile_bound} shows that when one smooths over time, the anti-concentration profile is controlled at a $1/\ell$ rate, which is much better than the desired fixed-time rate of $1/\sqrt{\ell}$ in Lemma~\ref{lem:anti_concentration}. Lemma~\ref{lem:anticoncentration_profile_bound} strengthens the anti-concentration bound obtained from a single-start coupling construction with Lemma~\ref{lem:coupling_TV_distance_control} and Lemma~\ref{lem:approx_monotone_coupling} alone. Its proof is in Section~\ref{sub:time_smoothed_anti_concentration_profile_bound}.

Given the three lemmas above, we are ready to prove Lemma~\ref{lem:anti_concentration}. 
\begin{proof}[Proof of Lemma~\ref{lem:anti_concentration}]
  Let $\eta > e^8$. Then $\log \eta - \frac12 \log \log \eta - 1 > \frac12 \log \eta \geq 2$ and $\excess = \frac12 \log \log \eta + 1 > 2$. Combining Lemma~\ref{lem:coupling_TV_distance_control} and Lemma~\ref{lem:approx_monotone_coupling}, we have
  \begin{align}
    \label{eq:anti_concentration_bound_bootstrap}
    &\quad \fA_\tau((\log \eta, \log(\eta) + 1]) \notag \\
    &= \Prob\parenth{\log P_{\tau} f(\uV_\To) > \log \eta }  -  \Prob\parenth{\log P_{\tau} f(\uV_\To) > \log \eta + 1} \notag \\
    &= \brackets{\Prob\parenth{\log P_{\tau} f(\uV_\To) > \log \eta } - \Prob\parenth{\log P_{\tau} f(\uW_\To) > \log \eta }} \notag \\
    &\quad \quad + \brackets{\Prob\parenth{\log P_{\tau} f(\uW_\To) > \log \eta } -  \Prob\parenth{\log P_{\tau} f(\uV_\To) > \log \eta + 1}} \notag \\
    &\lesssim \btau^2 \excess^2 I_\theta + \sqrt{\btau^2 \excess^2 I_\theta} \sqrt{1 + \frac{e^{-2\tau}}{1-e^{-2\tau}} (\To - \theta)} + \fA_{T-\theta}((\log \eta - \excess, \log \eta + \excess])  + \frac{3}{\sqrt{\log \eta}},
  \end{align}

  Apply the above construction for all $\theta \in (\To - 1, \To)$, take the average and apply Jensen's inequality for the square root function, we obtain
  \begin{align}
    \label{eq:anti_concentration_profile_bound_in_proof}
    \fA_\tau((\log \eta, \log(\eta) + 1]) &\lesssim \btau^2 \excess^2 \tilde{I} + \sqrt{\btau^2 \excess^2 \tilde{I}} \sqrt{\btau} + \int_{\To - 1}^{\To} \fA_{T-\theta}((\log \eta - \excess, \log \eta + \excess]) d\theta + \frac{1}{\sqrt{\log \eta}} \notag \\
    &\lesssim \btau^2 \excess^2 \tilde{I} + \sqrt{\btau^3 \excess^2 \tilde{I}} + \frac{\excess}{\log \eta} + \frac{1}{\sqrt{\log \eta}},
  \end{align}
  where $\tilde{I} \defn \int_{\To - 1}^{\To} I_\theta d\theta$ and the last line follows from Lemma~\ref{lem:anticoncentration_profile_bound} by covering $(\log \eta - \excess, \log \eta + \excess]$ with at most $\ceils{2\excess} + 1$ unit intervals and using $\excess = \frac12 \log \log \eta + 1$. 
  
  It remains to bound $\tilde{I}$. By definition, 
  \begin{align*}
    \tilde{I} = \int_{\To - 1}^{\To} I_\theta d\theta = \int_{\To - 1}^{\To} \Exs\brackets{\frac{\mathbf{1}_{R_\theta \geq \excess}}{R_\theta + 1}} d\theta,
  \end{align*}
  where $R_\theta = [\log \eta - \log P_{T-\theta} f(\uV_\theta)]_+$. We write 
  \begin{align*}
    \mathbf{1}_{R_\theta \geq \excess} = \mathbf{1}_{R_\theta > \frac12 \log \eta} + \mathbf{1}_{R_\theta \in [\excess, \frac12 \log \eta]}.
  \end{align*}
  For the first term, we have
  \begin{align}
    \label{eq:I_bound_first_term}
    \int_{\To - 1}^{\To} \Exs\brackets{\frac{\mathbf{1}_{R_\theta \geq \frac12 \log \eta}}{R_\theta + 1}} d\theta \leq \int_{\To - 1}^{\To} \frac{1}{\frac12 \log \eta + 1} d\theta \lesssim \frac{1}{\log \eta}.
  \end{align}
  For the second term, we decompose into unit levels and apply the definition of anti-concentration profile,
  \begin{align*}
    \Exs\brackets{\frac{\mathbf{1}_{R_\theta \in [\excess, \frac12 \log \eta]}}{R_\theta + 1}} &\leq \sum_{r = \floors{\excess}}^{\ceils{\frac12 \log \eta}} \frac{1}{r+1} \Prob(R_\theta \in [r, r + 1]) \\
    &\leq \sum_{r = \floors{\excess}}^{\ceils{\frac12 \log \eta}} \frac{1}{r+1} \fA_{T-\theta}((\log \eta - r - 1, \log \eta - r]).
  \end{align*}
  Integrating from $\To - 1$ to $\To$ and applying Lemma~\ref{lem:anticoncentration_profile_bound}, we obtain
  \begin{align}
    \label{eq:I_bound_second_term}
    \int_{\To - 1}^{\To} \Exs\brackets{\frac{\mathbf{1}_{R_\theta \in [\excess, \frac12 \log \eta]}}{R_\theta + 1}} d\theta &\lesssim \frac{1}{\log \eta} \sum_{r = \floors{\excess}}^{\ceils{\frac12 \log \eta}} \frac{1}{r+1}  \notag \\
    &\lesssim \frac{\excess}{\log \eta}.
  \end{align}
  Combining Eq.~\eqref{eq:I_bound_first_term} and Eq.~\eqref{eq:I_bound_second_term}, we obtain $\tilde{I} \lesssim \frac{\excess}{\log \eta}$. Plugging the $\tilde{I}$ bound back to Eq.~\eqref{eq:anti_concentration_profile_bound_in_proof}, we conclude that
  \begin{align*}
    \fA_\tau((\log \eta, \log(\eta) + 1]) \lesssim \frac{\btau^2 \excess^{\frac32}}{\sqrt{\log \eta}}.
  \end{align*}

\end{proof}
\begin{remark}
  Eq.~\eqref{eq:anti_concentration_bound_bootstrap} is the bootstrap inequality for anti-concentration profiles at two different times. In view of the Gaussian proof in~\cite{eldan2018regularization}, it is tempting to choose $T \approx \log n$ and a fixed starting time $\theta = 0$ in the application of Eq.~\eqref{eq:anti_concentration_bound_bootstrap}. With this choice, both the term $\frac{1}{R_\theta + 1}$ in Lemma~\ref{lem:coupling_TV_distance_control} and the term $\fA_{T-\theta}$ in Lemma~\ref{lem:approx_monotone_coupling} become easier to bound because $\log P_T f(\uV_0) \leq \log (1+e^{-T})^n \leq 1$. However, the perturbation duration term $\sqrt{\To - \theta}$ in Lemma~\ref{lem:coupling_TV_distance_control} causes $\sqrt{\log n}$ in the final bound, which is undesirable. It is worth mentioning that such a dimension-dependent term does not appear if we follow the same proof strategy in the Gaussian setting. 
  
  The key novelty in the Boolean setting is to first notice that the time-smoothed anti-concentration profile has a much better bound $1/\ell$ instead of $1/\sqrt{\ell}$ as shown in Lemma~\ref{lem:anticoncentration_profile_bound}. Then we can apply the coupling construction for all $\theta \in (\To - 1, \To)$ and take the time average. This allows us to prevent the term $\To - \theta$ from being dimension-dependent while still being able to control $\frac{1}{R_\theta + 1}$ and $\fA_{T-\theta}$ terms to be small on average in Lemma~\ref{lem:coupling_TV_distance_control}.

  Lemma~\ref{lem:anticoncentration_profile_bound} is reminiscent of Theorem 1 in~\cite{talagrand1989conjecture} about the tail bound of the time-smoothed heat semigroup $\int_0^1 P_t dt$. While the tail bound of $P_\tau$ is conjectured to be of order $1/(\eta \sqrt{\log \eta})$, Talagrand shows that the tail bound of the time-smoothed heat semigroup $\int_0^1 P_t dt$ is $\log\log \eta / (\eta \log \eta)$, which is much better. Here, we deal with the anti-concentration profile instead of the tail bound, but the improvement from $1/\sqrt{\ell}$ to $1/\ell$ with time-smoothing is of the same flavor. The proof of Lemma~\ref{lem:anticoncentration_profile_bound} is also inspired by the proof of Theorem 1 in~\cite{talagrand1989conjecture}, but technical details are different. 
  \end{remark}

The rest of this section is organized as follows. In Section~\ref{sub:main_stochastic_construction}, we introduce our coupling construction.  Section~\ref{sub:proof_lem:coupling_TV_distance_control} and Section~\ref{sub:proof_lem:approx_monotone_coupling} prove Lemma~\ref{lem:coupling_TV_distance_control} and Lemma~\ref{lem:approx_monotone_coupling} respectively. In Section~\ref{sub:time_smoothed_anti_concentration_profile_bound}, we prove Lemma~\ref{lem:anticoncentration_profile_bound}.

\subsection{Main coupling construction}
\label{sub:main_stochastic_construction}
The F\"ollmer process~\cite{follmer2005entropy}, as the (time-reparametrized) time-reversed Ornstein-Uhlenbeck (OU) process, plays an important role 
in constructing a coupling in the Gaussian counterpart of Talagrand's conjecture~\cite{eldan2018regularization}. Following this idea, we perturb the reverse heat process~\eqref{eq:def_reverse_process} to construct a coupling. The perturbation no longer applies to the drift, but to the jump rate. Recall that the reverse heat process $(\uV_t)_{t\in [0, T]}$ satisfies the following SDE
\begin{align*}
  \uV_0 &\sim \nu_{P_T f}, \notag\\
  d \uV_t &=  \sum_{i=1}^n (-2 \uV_{t-} e_i) \int_\real \mathbf{1}_{0 < z \leq \frac12 - S_i(e^{-{(T-t)}} \uV_{t-})} N^\tagi(dt, dz),
\end{align*}
where the score function is $S_i(x) = \frac{x_i \partial_i f (x)}{ f(x)}$. 

Recall that $\theta \in [0, \To)$ is the starting time of the perturbation,  $\excess \defn \frac{1}{2} \log \log \eta + 1 > 0$, and $R_\theta \defn [\log \eta - \log P_{T-\theta} f(\uV_\theta)]_+$ is the remaining gap at time $\theta$.
\paragraph{Perturbed reverse heat process}  We define for $t\in [\theta, T]$,
\begin{align}
  \label{eq:def_SDE_perturbed_reverse_process}
  \uW_\theta &\defn \uV_\theta \notag \\
  d \uW_t &\defn \sum_{i=1}^n (-2 \uW_{t-} e_i) \int_\real \mathbf{1}_{0 < z \leq \frac12 - (1-\delta_i  \mathbf{1}_{t \leq \stopT}) S_i(e^{-(T-t)}\uV_{t-})} \Ni(dt, dz),
\end{align}
where the $(\sF_t)$-stopping time is defined as
\begin{align}
  \label{eq:def_stopT}
  \stopT \defn \inf \braces{t \in [\theta, \To] \mid \max\braces{\log P_{T-t}f(\uV_t) - \excess,\ \log P_{T-t}f(\uW_t) } \geq \log \eta} \wedge \To,
\end{align}
and $\delta_i$ is a shorthand for $\delta_i (e^{-(T-t)}\uV_{t-})$, where 
\begin{align}
  \label{eq:def_delta_i}
  \delta_i(x) \defn  \bdelta  \brackets{\mathbf{1}_{S_i(x) > 0} + \frac{1-2 S_i(x)}{1 - 2 \bdelta S_i(x)} \mathbf{1}_{S_i(x) \leq 0}}, \quad i \in [n],\quad x\in [-1,1]^n.
\end{align}
where $\bdelta \defn \frac{\excess \mathbf{1}_{R_\theta \geq \excess}}{R_\theta + 1} \in [0, 1)$ is the perturbation size. Since $R_\theta$ is $\sF_\theta$-measurable, the perturbation size $\bdelta$ is also $\sF_\theta$-measurable and thus frozen after time $\theta$.
In words, the stopping time $\stopT$ is the first time at which either $\log P_{T-t}f(\uW_t)$ is at least $ \log \eta$ or $\log P_{T-t}f(\uV_t)$ is at least $\log \eta + \excess$, with the convention that $\stopT$ takes value $\To$ if neither excess is reached. The size of the perturbation is inversely proportional to the remaining gap. Additionally, it has two forms depending on whether the score $S_i$ is positive or not, to make the approximate monotonicity in Lemma~\ref{lem:approx_monotone_coupling} work.

\begin{remark}
  In Eldan and Lee's proof of the Gaussian counterpart~\cite{eldan2018regularization}, a $\delta$ perturbation is added to the score function, which results in a $\delta$ perturbation of the drift of F\"ollmer's process. Here, we cannot add perturbations to the drift as it would leave $\braces{-1,1}^n$. Nevertheless, the idea of perturbing the score, once the score on the Boolean hypercube is properly defined, remains valid and essential.
  
  Moreover, unlike in the Gaussian case, a constant $\delta$ perturbation is not sufficient here. Our coupling construction takes full advantage of the perturbation along the stochastic process: the perturbation $\delta_i$ is state-dependent and coordinate-dependent. 
\end{remark}

The processes $(\uV_t, \uW_t)_{t \in [\theta, T]}$ are coupled in that 
\begin{itemize}
  \item they are identical at time $\theta$: $\uW_\theta = \uV_\theta$;
  \item after time $\theta$, they are driven by the same PRMs $N^\tagi, \forall i \in [n]$;
  \item $\uW$ alone is not Markov and its jump rate depends on $\uV$. 
\end{itemize}
To ensure the well-posedness of the SDE solution, we verify that the jump rate is nonnegative and never explodes. The following lemma checks the range of $S_i$ and the next lemma verifies existence and uniqueness of the SDE solution. The proofs are deferred to Section~\ref{sub:proof_related_to_smoothness}. 

\begin{lemma}[Edge ratio bound]
  \label{lem:edge_ratio}
  For any nonnegative function $f: \braces{-1, 1}^n \to \real_+$ with $f \neq 0$, consider its multilinear expansion. Then for any $x\in (-1,1)^n$, $f$ is upper and lower bounded pointwise
  \begin{align*}
    0 < \vecnorm{f}{1} \prod_{j=1}^n (1-\abss{x_j}) \leq f(x) \leq \vecnorm{f}{1} \prod_{j=1}^n (1+\abss{x_j}).
  \end{align*}
  For any $i\in [n]$, the $i$-th edge ratio is bounded by
  \begin{align*}
    \frac{1-\abss{x_i}}{1+\abss{x_i}} \leq \frac{f(\flip_i(x))}{f(x)} \leq \frac{1+\abss{x_i}}{1-\abss{x_i}}, \quad \forall x \in (-1, 1)^n.
  \end{align*}
  Additionally, if $f$ is $\braces{0,1}$-valued, then its multilinear expansion takes values in $[0,1]$ on the inner cube $(-1,1)^n$.
\end{lemma}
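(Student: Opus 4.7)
My plan is to exploit the probabilistic interpretation of the multilinear extension, namely that for any $x \in [-1,1]^n$ we may write $f(x) = \mathbb{E}[f(Y)]$ where $Y \in \{-1,1\}^n$ has independent coordinates with $\mathbb{E}[Y_i] = x_i$, i.e.\ $\mathbb{P}(Y_i = \pm 1) = (1 \pm x_i)/2$. This representation follows from the Fourier expansion in~\eqref{eq:multilinear_expansion}: for such $Y$, $\mathbb{E}[Y^S] = \prod_{i \in S} x_i = x^S$, so $\mathbb{E}[f(Y)] = \sum_S \widehat{f}(S) x^S = f(x)$. Equivalently, for any fixed coordinate $i$, multilinearity in $x_i$ gives the one-step interpolation
\[
f(x) \;=\; \tfrac{1+x_i}{2}\, f(1,x_{-i}) \;+\; \tfrac{1-x_i}{2}\, f(-1,x_{-i}),
\]
which will drive the edge ratio bound.

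For the pointwise upper and lower bounds, I would note that the joint law of $Y$ assigns to each $y \in \{-1,1\}^n$ the weight $\prod_i \tfrac{1+x_i y_i}{2}$, and since $y_i \in \{-1,1\}$ we have $\tfrac{1-|x_i|}{2} \leq \tfrac{1+x_i y_i}{2} \leq \tfrac{1+|x_i|}{2}$ for every $i$. Multiplying these bounds and using $f \geq 0$ together with $\|f\|_1 = 2^{-n}\sum_y f(y)$, summing $f(y)\mathbb{P}(Y=y)$ over $y$ immediately yields
\[
\|f\|_1 \prod_{j=1}^n (1 - |x_j|) \;\leq\; f(x) \;\leq\; \|f\|_1 \prod_{j=1}^n (1 + |x_j|).
\]
Strict positivity of the lower bound on $(-1,1)^n$ follows because $f \not\equiv 0$ ensures at least one summand is positive, and all weights are strictly positive when $|x_j| < 1$.

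For the edge ratio, I would apply the one-coordinate interpolation to both $x$ and $\flip_i(x)$. Writing $a := f(+1,x_{-i})$, $b := f(-1,x_{-i})$, $u := (1+x_i)/2$, and $v := (1-x_i)/2$, we get $f(x) = ua + vb$ and $f(\flip_i(x)) = va + ub$, so
\[
\frac{f(\flip_i(x))}{f(x)} \;=\; \frac{va + ub}{ua + vb}.
\]
By symmetry under $x_i \mapsto -x_i$ it suffices to treat $x_i \geq 0$, i.e.\ $u \geq v$. Then $\tfrac{va+ub}{ua+vb} \leq \tfrac{u}{v}$ reduces to $v^2 a \leq u^2 a$, and the matching lower bound reduces to $u^2 b \geq v^2 b$; both are immediate from $u \geq v \geq 0$ and $a,b \geq 0$. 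Since $\min(u,v)/\max(u,v) = (1-|x_i|)/(1+|x_i|)$, this is exactly the claimed two-sided bound.

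There is no real obstacle here: once the probabilistic interpretation of the multilinear extension is in place, both bounds are short arithmetic verifications. The only point requiring a small amount of care is justifying that $f(x) > 0$ strictly for $x \in (-1,1)^n$, so that the denominator in the ratio is nonzero — but this is precisely what the lower bound $\|f\|_1 \prod_j (1 - |x_j|) > 0$ (proved first) supplies.
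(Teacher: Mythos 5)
Your proof is correct and takes essentially the same approach as the paper's: you use the representation $f(x) = \sum_{y\in\{-1,1\}^n} f(y)\prod_j \tfrac{1+x_j y_j}{2}$ (the biased-coin expectation) to obtain the pointwise upper and lower bounds, and then isolate the $i$-th coordinate via $f(x) = \tfrac{1+x_i}{2}\,f(+1,x_{-i}) + \tfrac{1-x_i}{2}\,f(-1,x_{-i})$ to compare $f(\flip_i(x))$ with $f(x)$, exactly as in the paper (where $a,b$ are the paper's $R_1,R_2$ up to normalization). Your bookkeeping with $u,v$ and the single symmetry reduction $x_i\mapsto -x_i$ is a bit tidier than the paper's explicit case split, but the underlying argument is the same.
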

\begin{lemma} [Uniqueness and existence]
  \label{lem:uniqueness_existence_SDE_sol}
  The coupled SDE for $(\uV_t, \uW_t)_{t \in [\theta, T]}$ in Eq.~\eqref{eq:def_reverse_process} and~\eqref{eq:def_SDE_perturbed_reverse_process} has a unique strong solution. Additionally, for $t \leq \To$ and $i \in [n]$, $0 \leq \delta_i < \btau$, where $\btau = \frac{1+e^{-\tau}}{1-e^{-\tau}}$. 
\end{lemma}

\paragraph{Predictable joint generator representation} The joint process $(\uV_t, \uW_t)_{t \in [\theta, T]}$ is not Markov because the rates depend on the predictable stopping indicator $\mathbf 1_{t\leq\stopT}$ and on the frozen
$\sF_\theta$-measurable perturbation size $\bdelta$. One could make
the process Markov by enlarging the state space to include this
frozen perturbation size and the stopping flag $\mathbf{1}_{t < \stopT}$. However, to simplify notation, we instead use the equivalent
predictable joint generator formulation.

Precisely, conditionally on $\sF_\theta$, $(\uV_t, \uW_t)_{t \in [\theta, T]}$ is a finite-state pure-jump process with predictable jump rates. By It\^o's formula, there exists a path-dependent and time-dependent operator $\cG_t$, predictable with respect to $(\sF_t)$, such that for every bounded $h: \braces{-1, 1}^{2n} \to \real$, 
\begin{align*}
  \cM_t^{h} &\defn h(\uV_t, \uW_t) - h(\uV_\theta, \uW_\theta) - \int_\theta^t \cG_s h(\uV_{s-}, \uW_{s-}) ds
\end{align*}
is a martingale on $[\theta, T]$. For simplicity, we refer to this operator $\cG_t$ as the predictable joint generator of $(\uV_t, \uW_t)_{t \in [\theta, T]}$. It takes the form
\begin{align}
  \label{eq:def_joint_generator}
  \cG_t \defn \jL^0_t + \mathbf{1}_{t \leq \stopT} A_t. 
\end{align}
Here, with shorthand $\rho_t = e^{-(T-t)}, \delta_i = \delta_i(\rho_t x), S_i = S_i(\rho_t x)$,
\begin{align}
  \label{eq:def_At}
  \jL_t^0 h(x, y) &= \sum_{i=1}^n  \parenth{\frac12 - S_i } \Delta_i^{xy} h(x, y), \notag \\
  A_t h (x, y) &= \sum_{i=1}^n  \mathbf{1}_{S_i > 0}  \delta_i  S_i \Delta_i^{y} h(x, y) + \mathbf{1}_{S_i \leq 0} \delta_i  S_i \Delta_i^{y} h(\flip_i(x), y)  . 
\end{align}
We make a few remarks about $\cG_t$. 
\begin{itemize}
  \item In words, $\cG_t$
  \begin{itemize}
    \item up to and including $\stopT$ (always $\leq \To$), coincides with the perturbed joint generator $\jL_t^0 + A_t$;
    \item after $\stopT$, coincides with the unperturbed joint generator $\jL_t^0$.
  \end{itemize}
  \item The path-dependence of $\cG_t$ enters through the frozen $\sF_\theta$-measurable perturbation size $\bdelta$ and through the predictable stopping indicator $\mathbf{1}_{t\leq\stopT}$. Conditioned on $\sF_\theta$, $\bdelta$ is fixed.
  \item Without perturbation, as captured in $\jL_t^0$, the two processes jump together. Thus, the coordinatewise product $x \odot y$ is preserved under the unperturbed dynamics.
\end{itemize}

\subsection{TV distance control}
\label{sub:proof_lem:coupling_TV_distance_control}

In this subsection, we upper-bound the TV distance between the laws of $\uV_{\To}$ and $\uW_{\To}$. Since $\tvdist{\Law(\uV_{\To})}{\Law(\uW_{\To})} = \sup_{\phi: \braces{-1,1}^n \to \braces{0, 1}} \abss{\Exs[ \phi(\uV_{\To} )] - \Exs[\phi(\uW_{\To})]}$, it suffices to consider any $\{0,1\}$-valued test function $\phi$. The main idea is to apply It\^o's formula to compare the perturbed and unperturbed joint processes from $\theta$ to $\To$ by treating the predictable joint generator~\eqref{eq:def_joint_generator} as unperturbed generator $\jL^0_t$ plus a small perturbation. The Doob $h$-transform of the process $(\uV_t)$ is crucial in the analysis because once we condition on $\uV_T = \zeta$, the coordinates of $\uV_t$ are independent and have a closed-form expression. The closed-form expression allows us to compute the perturbation effect explicitly.

Before completing the proof of Lemma~\ref{lem:coupling_TV_distance_control}, we need several lemmas, with proofs deferred to Section~\ref{sub:proof_related_to_tv_lemma}.

\begin{lemma}[Basic level-1 inequality]
  \label{lem:level_1_ineq}
  Let $h: \braces{-1,1}^n \to \braces{0, 1}$. Consider its multilinear expansion. For any $x \in (-1,1)^n$, we have
  \begin{align*}
    \sum_{i=1}^n (1-x_i^2) \parenth{\partial_i h(x)}^2 \leq h(x) - h(x)^2 \leq \frac14. 
  \end{align*}
\end{lemma}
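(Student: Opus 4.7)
The plan is to prove the inequality by induction on the dimension $n$, exploiting the recursive structure of the multilinear extension across a single coordinate. The base case $n=1$ is a four-case check: writing $h(x) = a + bx$ with $h(\pm 1) \in \braces{0,1}$, equality holds in each case (trivially for $h \equiv 0, 1$, and by direct computation for $h(x) = (1 \pm x)/2$).

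For the inductive step, given $h: \braces{-1,1}^n \to \braces{0,1}$, I would define $h^+(x') \defn h(x', +1)$ and $h^-(x') \defn h(x', -1)$ as $\braces{0,1}$-valued functions on $\braces{-1,1}^{n-1}$. The multilinear extension of $h$ factorizes as $h(x) = p\, h^+(x') + q\, h^-(x')$ with $p \defn (1+x_n)/2$ and $q \defn (1-x_n)/2$, so that $pq = (1-x_n^2)/4$ and $p + q = 1$. Differentiating, $\partial_n h = (h^+ - h^-)/2$, so the $n$-th summand contributes exactly $(1-x_n^2)(\partial_n h)^2 = pq(h^+ - h^-)^2$. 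For $i < n$, $\partial_i h = p\, \partial_i h^+ + q\, \partial_i h^-$, and Jensen's inequality (convexity of $z \mapsto z^2$ with probability weights $p, q$) gives $(\partial_i h)^2 \leq p(\partial_i h^+)^2 + q(\partial_i h^-)^2$. Summing over $i < n$ and invoking the induction hypothesis applied to $h^\pm$ on $\braces{-1,1}^{n-1}$ bounds the head of the sum by $p\parenth{h^+ - (h^+)^2} + q\parenth{h^- - (h^-)^2}$.

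Adding the two contributions, the claim reduces to the pointwise identity
\begin{equation*}
  p(h^+)^2 + q(h^-)^2 - pq(h^+ - h^-)^2 = (ph^+ + qh^-)^2 = h(x)^2,
\end{equation*}
which follows from $p^2 + pq = p$ and $q^2 + pq = q$ together with $p + q = 1$. I do not expect a serious obstacle; the only subtle point is noticing that fixing $x_n = \pm 1$ in the multilinear extension of $h$ recovers the multilinear extension of $h^\pm$ on the lower-dimensional cube, so the induction hypothesis applies at dimension $n-1$. The cancellation above is in fact sharp, meaning that the only slack in the whole argument is the Jensen loss in the $i < n$ terms; this reflects the classical tightness of the level-$1$ inequality for indicator functions on the Boolean hypercube.
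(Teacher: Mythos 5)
Your induction is correct, and it takes a genuinely different route from the paper's. The paper proves the lemma via $p$-biased Parseval: with $p = (1+x)/2$, expand $h$ in the $p$-biased orthonormal basis $(\phi_S)$, use the facts that $\hat{h}_p(\emptyset) = h(x)$ and $\hat{h}_p(\braces{i}) = \sqrt{1-x_i^2}\,\partial_i h(x)$, and invoke Parseval's inequality $\hat{h}_p(\emptyset)^2 + \sum_i \hat{h}_p(\braces{i})^2 \leq \angles{h,h}_{\mu_p}$; since $h$ is $\braces{0,1}$-valued, $\angles{h,h}_{\mu_p} = \Exs_{\mu_p}[h^2] = \Exs_{\mu_p}[h] = h(x)$, which rearranges to the claim. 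Your argument replaces all of this Fourier machinery by a single-coordinate recursion $h = p\,h^+ + q\,h^-$ with $p+q=1$, Jensen's inequality on the remaining $n-1$ coordinates, and the exact algebraic identity $p(h^+)^2 + q(h^-)^2 - pq(h^+-h^-)^2 = (ph^+ + qh^-)^2$; I have checked each step and the identity does hold, so the induction closes. What the paper's approach buys is structural insight: it exhibits the inequality as precisely the truncation of an exact Plancherel identity to Fourier levels $\leq 1$, so the slack is the weight on levels $\geq 2$, and this viewpoint also explains why the factor $(1-x_i^2)$ is the right one to attach to $\partial_i h$. What your approach buys is elementary self-containment (no biased Fourier analysis needed), and it localizes the slack in the Jensen step, which — as your final remark anticipates — is exactly the higher-level Fourier weight viewed recursively. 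Both are valid and comparably short.
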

This is a well-known consequence of Parseval's inequality for biased Fourier analysis, see, e.g., Section 3 of~\cite{eldan2023noise} or Appendix 1 of~\cite{eldan2020concentration}. 

Define
\begin{align}
  \label{eq:def_cS}
  \cS \defn \Exs \brackets{ \int_{\theta}^{\stopT} \sum_{i=1}^n \bdelta^2 S_i(\rV_{t-})^2 dt}
\end{align}
\begin{lemma}[Expected squared score bound] 
  \label{lem:expected_squared_score_bound}
  For the reverse heat process~\eqref{eq:def_reverse_process} and its scaled version $(\rV_t)$ in Eq.~\eqref{eq:def_scaled_reverse_jump_process}, we have
  \begin{align*}
    \Exs \brackets{ \int_{\theta}^{\stopT} \sum_{i=1}^n S_i(\rV_{t-})^2 dt \middle| \sF_\theta} \leq \btau (R_\theta + \excess + 1).
  \end{align*}
  Consequently, $\cS$ defined in~\eqref{eq:def_cS} satisfies
  \begin{align*}
    \cS \leq \Exs \brackets{\parenth{\frac{\excess \mathbf{1}_{R_\theta \geq \excess}}{R_\theta + 1} }^2 \btau (R_\theta + \excess + 1)} \leq 2 \btau \excess^2 \Exs\brackets{\frac{\mathbf{1}_{R_\theta \geq \excess}}{R_\theta + 1}}.
  \end{align*}
\end{lemma}
Intuitively, It\^o's formula applied to $\log f(\rV_t)$ relates the total squared score $\int_\theta^t \sum_{i=1}^n S_i(\rV_{s-})^2 ds$ to $\log f(\rV_t) - \log f(\rV_\theta)$, up to a multiplicative factor.

\begin{lemma}[Boolean heat bridge]
  \label{lem:boolean_bridge_formula}
  For $0 \leq t \leq \To$ and $x, y, \zeta \in \braces{-1,1}^n$, define
\begin{align}
  \label{eq:def_m}
  m_{t}(x, y, \zeta) \defn \Exs\brackets{\uV_{\To} \odot (x \odot y) \middle| \uV_t = x, \uV_{T} = \zeta}.
\end{align}
The coordinates of $\uV_{\To}$ are conditionally independent given $(\uV_t, \uV_{T})$ and their conditional law has a closed form. For $i \in [n]$, the $i$-th component of $m_{t}(x, y, \zeta)$ is given by  
  \begin{align*}
     m_{t}^{[i]}(x, y, \zeta) = a_{t} y_i + b_{t} \sigma_i
  \end{align*}
  where $\sigma_i \defn x_i y_i \zeta_i$ and
  \begin{align}
    \label{eq:def_a_b}
    a_{t} &\defn \frac{\sinh(T-\To)}{\sinh(T-t)}, \quad b_{t} \defn \frac{\sinh(\To-t)}{\sinh(T-t)}.
  \end{align}
  Because the coordinates of $\uV_{\To}$ are conditionally independent, for any multilinear $\phi:\braces{-1,1}^n \to \real$,
  \begin{align*}
    \Exs\brackets{\phi(\uV_\To \odot (x \odot y))  \middle| \uV_t = x, \uV_T = \zeta} = \phi(m_{t}(x, y, \zeta)).
  \end{align*}
  We define $q_t^{\zeta}(x, y) \defn \phi(m_{t}(x, y, \zeta))$. Then
  \begin{align}
    \label{eq:q_t_derivative}
    \Delta_i^y q_t^{\zeta}(x, y) &= -2 (a_t y_i + b_t \sigma_i) \partial_i \phi(m_{t}(x, y, \zeta)), \notag \\
    \Delta_i^y q_t^{\zeta}(\flip_i(x), y) &= -2 (a_t y_i - b_t \sigma_i) \partial_i \phi(m_{t}(x, y, \zeta)), \notag \\
    \Delta_{i}^{xy} q_t^{\zeta}(x, y) &= -2 a_t y_i \partial_i \phi(m_{t}(x, y, \zeta)).
  \end{align}
\end{lemma}

\begin{lemma}[Doob $h$-transform of the predictable joint generator]
  \label{lem:Doob_h_transform}
  Given $\zeta \in \braces{-1,1}^n$, define
  \begin{align}
    \label{eq:def_H_r_lambda}
    H_t^\zeta(x) &\defn \Prob(\uV_T = \zeta \mid \uV_t = x), \notag \\
    r_{t, i}^{\zeta}(x) &\defn \frac{H_t^\zeta(\flip_i(x))}{H_t^\zeta(x)}, \notag \\
    \lambda_{t, i}^{\zeta}(x) &\defn \frac{1 - \rho_t x_i \zeta_i}{1 + \rho_t x_i \zeta_i}, \quad \forall x \in \braces{-1, 1}^n.
  \end{align}
  Under the conditioned law $\Prob^\zeta \defn \Prob(\cdot \mid \uV_T = \zeta)$,  with shorthand $\rho_t = e^{-(T-t)}$, $\delta_i = \delta_i(\rho_t x)$, $S_i = S_i(\rho_t x)$, $r_i^{\zeta} = r_{t, i}^{\zeta}(x)$, $\lambda_i^{\zeta} = \lambda_{t, i}^{\zeta}(x)$, for every bounded function $h: \braces{-1, 1}^{2n} \to \real$, 
  \begin{align*}
    \cM_t^{\zeta, h} &\defn h(\uV_t, \uW_t) - h(\uV_\theta, \uW_\theta) - \int_\theta^t \cG_s^\zeta h(\uV_{s-}, \uW_{s-}) ds
  \end{align*}
  is a $\Prob^\zeta$-martingale on $[\theta, T]$, where $\cG_t^\zeta$ is the Doob $h$-transform of the predictable joint generator $\cG_t$ with respect to the function $H_t^\zeta$, and it takes the form
  \begin{align*}
    \cG_t^\zeta &= \jL_t^{0, \zeta} + \mathbf{1}_{t \leq \stopT} A_t^\zeta, \\
    \jL_t^{0, \zeta} h(x, y) &= \sum_{i=1}^n r_{i}^{\zeta} \parenth{\frac12 - S_i } \Delta_i^{xy} h(x, y) \\
    A_t^\zeta h(x, y) &= \sum_{i=1}^n  \mathbf{1}_{S_i > 0}  \delta_i  S_i \Delta_{i}^y h(x, y)  + \mathbf{1}_{S_i \leq 0} r_{i}^{\zeta} \delta_i  S_i  \Delta_i^y h(\flip_i(x), y).
  \end{align*}
  Moreover,
  \begin{align*}
    r_i^{\zeta} \parenth{\frac12 - S_i } &=  \frac12 \lambda_{i}^\zeta.
  \end{align*}
\end{lemma}
\begin{remark}
  The key reason that the Doob $h$-transform works for the predictable joint generator as it does for general Markov semigroups is that the conditioning weight $H_t^{\zeta}$ depends only on $\uV_t$, even though the joint process is path-dependent through $\mathbf{1}_{t \leq \stopT}$. Both the Boolean heat bridge in Lemma~\ref{lem:boolean_bridge_formula} and the Doob $h$-transform in Lemma~\ref{lem:Doob_h_transform} become effective proof techniques for the TV distance control in Lemma~\ref{lem:coupling_TV_distance_control} under the heat semigroup $P_\tau$ with $\tau >0$. That is, for $t < \To$, we have
  \begin{align*}
    0 < a_t + b_t = \frac{\cosh\parenth{\frac{\tau- (\To - t)}{2}}}{\cosh\parenth{\frac{\tau+(\To - t)}{2}}} < 1.
  \end{align*}
  As long as $\tau > 0$, $m_t$ lies strictly inside the inner cube $(-1,1)^n$ and also the conditional unperturbed jump rate $\frac12 \lambda_{t, i}^{\zeta}(x)$ is strictly positive and finite. 
\end{remark}

\begin{lemma}[Weighted energy estimate]
  \label{lem:weighted_energy_estimate}
  Given a function $\phi:\braces{-1,1}^n \to \braces{0, 1}$. For $a_{t}$, $b_{t}$ and $m_{t}$ defined in Lemma~\ref{lem:boolean_bridge_formula}, $\lambda_{t, i}^{\zeta}(x)$ defined in Lemma~\ref{lem:Doob_h_transform}, define
  \begin{align*}
    \Psi_a &\defn \Exs \int_\theta^{\To} a_{t}^2 \sum_{i=1}^n \lambda_{t, i}^{\uV_T}(\uV_t) \abss{\partial_i \phi(m_{t}(\uV_t, \uW_t, \uV_T))}^2 dt, \\
    \Psi_b &\defn \Exs \int_\theta^{\To} b_{t}^2 \sum_{i=1}^n \lambda_{t, i}^{\uV_T}(\uV_t) \abss{\partial_i \phi(m_{t}(\uV_t, \uW_t, \uV_T))}^2 dt.
  \end{align*}
  Then, with $\cS$ defined in Lemma~\ref{lem:expected_squared_score_bound}, we have
  \begin{align*}
    \Psi_b &\leq \frac{e^{-2\tau}}{1-e^{-2\tau}} \parenth{\To - \theta}, \\
    \Psi_a &\lesssim 1 + \btau \cS + \Psi_b.
  \end{align*}
\end{lemma}

With the lemmas above, we are ready to prove Lemma~\ref{lem:coupling_TV_distance_control}.
\begin{proof}[Proof of Lemma~\ref{lem:coupling_TV_distance_control}]
  Fix a test function $\phi:\braces{-1,1}^n \to \braces{0, 1}$ and set a bivariate test function such that $h(x, y) = \phi(y)$. We bound $\abss{\Exs \phi(\uW_{\To}) - \Exs \phi(\uV_{\To})}$ and then take the supremum over $\phi$ to prove a total-variation bound.

  First, we use It\^o's formula to provide a pathwise representation. Let $(\uV_t, \uW_{t}^{(0)})_{t \in [\theta, T]}$ denote the unperturbed joint process with generator $\jL_t^0$, define 
  \begin{align*}
    u_t(x, y) \defn \Exs[h(\uV_{\To}, \uW_{\To}^{(0)}) \mid (\uV_t, \uW_t^{(0)}) = (x, y)], \quad \text{ for } (t, x, y) \in [\theta, \To] \times \braces{-1,1}^{2n}.
  \end{align*}
  By definition, $u_t$ solves the backward equation $\partial_t u_t + \jL_t^0 u_t = 0$ with the terminal condition $u_{\To} = h$. Applying It\^o's formula to $u_t(\uV_t, \uW_t)$, we obtain
  \begin{align*}
    u_{\To}(\uV_{\To}, \uW_{\To}) - u_\theta(\uV_\theta, \uW_\theta) &= \int_\theta^{\To} \parenth{\partial_t u_t + \cG_t u_t}(\uV_{t-}, \uW_{t-}) dt + \cM_{\To},
  \end{align*}
  where $(\cM_{t})_{t \in [\theta, \To]}$ is a martingale which is $0$ at $\theta$.
  Since $\partial_t u_t + \jL_t^0 u_t = 0$, we have $\partial_t u_t + \cG_t u_t = \mathbf{1}_{t \leq \stopT} A_t u_t$. Taking expectation and using the fact that $\uV_\theta = \uW_\theta$, we obtain
  \begin{align}
    \label{eq:pathwise_formula}
    \Exs \phi(\uW_{\To}) - \Exs \phi(\uV_{\To})
    &= \Exs u_{\To}(\uV_{\To}, \uW_{\To}) - \Exs u_{\To}(\uV_{\To}, \uW_{\To}^{(0)}) \notag \\
    &= \Exs u_{\To}(\uV_{\To}, \uW_{\To}) - \Exs u_\theta(\uV_\theta, \uW_\theta) \notag \\
    &= \Exs \int_\theta^{\To} \mathbf{1}_{t \leq \stopT} A_t u_t(\uV_{t-}, \uW_{t-}) dt,
  \end{align}

  Second, we rewrite $u_t$ in terms of the Boolean heat bridge. We have
  \begin{align*}
    u_t(x, y) &= \Exs[\phi(\uW_{\To}^{(0)}) \mid (\uV_t, \uW_t^{(0)}) = (x, y)] \\
    &\overset{(i)}{=} \Exs[\phi(\uV_{\To} \odot (x \odot y)) \mid (\uV_t, \uW_t^{(0)}) = (x, y)] \\
    &\overset{(ii)}{=} \Exs\brackets{ \Exs[\phi(\uV_{\To} \odot (x \odot y)) \mid (\uV_t, \uW_t^{(0)}, \uV_{T}) = (x, y, \uV_{T}) ] } \\
    &\overset{(iii)}{=} \Exs\brackets{ \Exs[\phi(\uV_{\To} \odot (x \odot y)) \mid (\uV_t, \uV_{T}) = (x, \uV_{T}) ] } \\
    &\overset{(iv)}{=} \Exs[\phi(m_{t}(x, y, \uV_{T})) \mid \uV_t = x] \\
    &\overset{(v)}{=} \Exs[q_t^{\uV_{T}}(x, y) \mid \uV_t = x],
  \end{align*}
  where (i) uses the fact that $(\uV_t, \uW_t^{(0)})$ have synchronized jumps; (ii) uses the tower property; (iii) follows from the fact that $(\uV_t, \uV_T)$ is sufficient for $\uV_{\To}$ from Lemma~\ref{lem:boolean_bridge_formula}; (iv) and (v) uses the definition of $m_t$ and $q_t$ in Lemma~\ref{lem:boolean_bridge_formula}. 

  Using the definition of $A_t$ and the representation for $u_t$, we obtain
  \begin{align*}
     &\quad A_t u_t(x, y) \\
     &= \sum_{i=1}^n  \mathbf{1}_{S_i > 0}  \delta_i  S_i \Exs\brackets{\Delta_i^y q_t^{\uV_T}(x, y) \mid \uV_t = x} + \mathbf{1}_{S_i \leq 0} \delta_i  S_i \Exs\brackets{\Delta_i^y q_t^{\uV_T}(\flip_i(x), y) \mid \uV_t = \flip_i(x)} \\
     &\overset{(i)}{=} \sum_{i=1}^n  \mathbf{1}_{S_i > 0}  \delta_i  S_i \sum_{\zeta} H_{t}^{\zeta}(x) \brackets{\Delta_i^y q_t^{\zeta}(x, y)} + \mathbf{1}_{S_i \leq 0} \delta_i  S_i \sum_{\zeta} H_t^{\zeta}(\flip_i(x))\brackets{\Delta_i^y q_t^{\zeta}(\flip_i(x), y)} \\
     &= \sum_{\zeta} H_{t}^{\zeta}(x) \sum_{i=1}^n \delta_i S_i \brackets{\mathbf{1}_{S_i > 0} + r_i^{\zeta} \mathbf{1}_{S_i \leq 0} } \brackets{\mathbf{1}_{S_i > 0} \Delta_i^y q_t^{\zeta}(x, y) + \mathbf{1}_{S_i \leq 0} \Delta_i^y q_t^{\zeta}(\flip_i(x), y)}.
  \end{align*}
  where $(i)$ expands the conditional expectation with respect to $\uV_T$ and uses the definition of $H_t^{\zeta}$ in Lemma~\ref{lem:Doob_h_transform}, and the last step uses $r_i^{\zeta} = r_{t, i}^{\zeta}(x) = \frac{H_t^{\zeta}(\flip_i(x))}{H_t^{\zeta}(x)}$. 

  Note that from Lemma~\ref{lem:Doob_h_transform} and $\btau^{-1} \leq \lambda_{t, i}^{\zeta}(x) \leq \btau$,
  \begin{align*}
    \abss{\delta_i \brackets{\mathbf{1}_{S_i > 0} + r_i^{\zeta} \mathbf{1}_{S_i \leq 0} }}^2 = \abss{\bdelta \brackets{\mathbf{1}_{S_i > 0} + \frac{\lambda_{t, i}^{\zeta}(x)}{1 - 2 \bdelta S_i} \mathbf{1}_{S_i \leq 0}}}^2
    \leq \bdelta^2 \btau  \lambda_{t, i}^{\zeta}(x).
  \end{align*}
  Therefore,
  \begin{align}
    \label{eq:bound_At_u_t}
    \abss{A_t u_t(x, y)} &\leq \sum_{\zeta} H_t^{\zeta}(x) \sum_{i=1}^n \abss{\delta_i S_i \brackets{\mathbf{1}_{S_i > 0} + r_i^{\zeta} \mathbf{1}_{S_i \leq 0} }} \sqrt{\abss{\Delta_i^y q_t^{\zeta}(x, y)}^2 + \abss{\Delta_i^y q_t^{\zeta}(\flip_i(x), y)}^2} \notag \\
    &\leq \btau^{\frac12} \sum_{\zeta} H_t^{\zeta}(x)  \parenth{\sum_{i=1}^n \bdelta^2 S_i^2}^{\frac12} \brackets{\sum_{i=1}^n \lambda_{t, i}^{\zeta}(x) \parenth{\abss{\Delta_i^y q_t^{\zeta}(x, y)}^2 + \abss{\Delta_i^y q_t^{\zeta}(\flip_i(x), y)}^2}  }^{\frac12} \notag \\
    &\leq \btau^{\frac12} \parenth{\sum_{i=1}^n \bdelta^2 S_i^2}^{\frac12} \brackets{ \sum_{\zeta} H_t^{\zeta}(x)  \sum_{i=1}^n \lambda_{t, i}^{\zeta}(x) \parenth{\abss{\Delta_i^y q_t^{\zeta}(x, y)}^2 + \abss{\Delta_i^y q_t^{\zeta}(\flip_i(x), y)}^2}  }^{\frac12},
  \end{align}
  where the last step uses Jensen's inequality and the concavity of square root.
  Using the identities from Lemma~\ref{lem:boolean_bridge_formula}, we have
  \begin{align*}
    \abss{\Delta_i^y q_t^{\zeta}(x, y)}^2 + \abss{\Delta_i^y q_t^{\zeta}(\flip_i(x), y)}^2 = 8 (a_t^2 + b_t^2) \abss{\partial_i \phi(m_{t}(x, y, \zeta))}^2.
  \end{align*}
  Plugging Eq.~\eqref{eq:bound_At_u_t} with the above identity into Eq.~\eqref{eq:pathwise_formula} and applying Cauchy-Schwarz in time and in expectation, we obtain
  \begin{align*}
    \abss{\Exs \phi(\uW_{\To}) - \Exs \phi(\uV_{\To})} &\lesssim \btau^{\frac12} \parenth{\Exs \brackets{\int_\theta^{\stopT} \bdelta^2 \sum_{i=1}^n S_i(\rV_{t-})^2 dt}}^{\frac12} \parenth{ \Psi_a + \Psi_b}^{\frac12} \\
    & = \btau^{\frac12} \cS^{\frac12} \parenth{ \Psi_a + \Psi_b}^{\frac12}.
  \end{align*}
  Applying the bounds for $\Psi_a$ and $\Psi_b$ in Lemma~\ref{lem:weighted_energy_estimate} and the bound for $\cS$ in Lemma~\ref{lem:expected_squared_score_bound}, we have
  \begin{align*}
    \abss{\Exs \phi(\uW_{\To}) - \Exs \phi(\uV_{\To})} &\lesssim \btau^{\frac12} \cS^{\frac12} \parenth{1 + \btau \cS + \Psi_b}^{\frac12} \\
    &\lesssim \btau^{\frac12} \cS^{\frac12} \parenth{1 + \btau \cS + \frac{e^{-2\tau}}{1-e^{-2\tau}} (\To - \theta) }^{\frac12} \\
    &\lesssim \btau^2 \excess^2 I_\theta + \sqrt{\btau^2 \excess^2 I_\theta}\sqrt{1 + \frac{e^{-2\tau}}{1-e^{-2\tau}} (\To - \theta)}.
  \end{align*}
  Conclude by taking the supremum over $\phi$.

\end{proof}

\begin{remark}
  The $b_t$ term in the bound for $\Psi_b$ in Lemma~\ref{lem:weighted_energy_estimate} is the only place that causes dependence on the maximal perturbation duration $\parenth{\To - \theta}$. This is an obstruction specific to the Boolean setting because the same proof strategy for the Gaussian counterpart does not produce such a term.
\end{remark}
\begin{remark}\label{rmk:remark_after_lem2}
  A natural alternative idea to bound the TV distance is through a Kullback-Leibler (KL) divergence bound and Pinsker's inequality. The KL divergence can be bounded via Girsanov's theorem applied to jump processes (see e.g.~\cite{leonard2012girsanov}). However, to bound KL divergence this way requires a bound on $\sum_{i=1}^n \parenth{S_i(\rho_t \uV_t) - S_i(\rho_t \uW_t)}^2$, and we do not know how to obtain a good bound of this type. Intuitively, while in the Gaussian counterpart the perturbed process and the original process are close in $L^2$ distance, the discrete nature of the Boolean hypercube $\braces{-1,1}^n$ makes them far apart in $L^2$ distance. This seems to be the main difficulty in working with the perturbed reverse process on the Boolean hypercube. 
  
  Another idea to bound the TV distance is to follow the proof in~\cite{eldan2018regularization} and~\cite{lehec2016regularization}. They crucially use the semi-log-convexity gained from the heat semigroup (see Eq. (5.6) in~\cite{lehec2016regularization}), and the second-order Taylor expansion derived from it. As we show in Lemma~\ref{lem:hessian_smoothness} in Appendix, the Boolean heat semigroup has an analogous semi-log-convexity, giving
  \begin{align*}
    \log P_\tau f(\uW_{\To}) \geq \log P_\tau f(\uV_{\To}) + \angles{\nabla \log P_\tau f (\uV_{\To}), \uW_{\To} -\uV_{\To}} - \frac{c_\tau}{2} \vecnorm{\uW_{\To} - \uV_{\To}}{2}^2,
  \end{align*}
  for a constant $c_\tau$ that depends only on $\tau$. However, we do not know how to obtain a good dimension-free $L^2$ bound on $\vecnorm{\uW_{\To} - \uV_{\To}}{2}$, which is intuitively quite large as the distance between two points on the Boolean hypercube. The present proof is designed to avoid such $L^2$ bounds.
\end{remark}

\subsection{Proof of Lemma~\ref{lem:approx_monotone_coupling} on the approximate monotonicity at tail}
\label{sub:proof_lem:approx_monotone_coupling}
In this section, we show that given an anti-concentration bound at the starting time $\theta$, our coupling construction $(\uV, \uW)$ makes $\log P_{\tau}f (\uV_{\To})$ strictly larger than $\log P_{\tau}f (\uW_{\To})$ conditioned on $\log P_{\tau}f (\uW_{\To})$ being large with high probability, via stochastic calculus. The perturbation in Eq.~\eqref{eq:def_delta_i} is designed such that $\log P_{\tau}f (\uV_{\To}) - \log P_{\tau}f (\uW_{\To})$ can be lower bounded by a constant times $\log P_{T-\stopT}f (\uW_{\stopT}) - \log P_{T-\theta}f (\uW_{\theta})$ up to small terms. In the following, it is more convenient to work with scaled processes, with $\rho_t = e^{-(T-t)}$,
\begin{align*}
  \rV_t &= \rho_t \uV_t, \\
  \rW_t &\defn \rho_t \uW_t.
\end{align*}
Then $P_{T-t}f (\uV_t) = f(\rV_t)$ and $P_{T-t}f (\uW_t) = f(\rW_t)$.
Applying It\^o's formula to $\log f(\rV_t)$ and $\log f(\rW_t)$, we obtain
\begin{align*}
  d \log f(\rV_t) &= \sum_{i=1}^n \brackets{S_i(\rV_{t-}) dt + \int \log \parenth{1-2S_i(\rV_{t-})} \mathbf{1}_{0 < z \leq \frac12 - S_i(\rV_{t-})} \Ni(dt, dz) }, \\
  d \log f(\rW_t) &= \sum_{i=1}^n \brackets{S_i(\rW_{t-}) dt + \int \log \parenth{1-2S_i(\rW_{t-})} \mathbf{1}_{0 < z \leq \frac12 - (1 - \delta_i \mathbf{1}_{t \leq \stopT})S_i(\rV_{t-})} \Ni(dt, dz) },
\end{align*}
where recall that $\delta_i \equiv \delta_i (\rV_{t-})$ and
\begin{align*}
  \delta_i(x) = \bdelta  \brackets{\mathbf{1}_{S_i(x) > 0} + \frac{1-2 S_i(x)}{1 - 2 \bdelta S_i(x)} \mathbf{1}_{S_i(x) \leq 0}}, \quad i \in [n],\quad x\in [-1,1]^n.
\end{align*}
Introduce the shorthand $(v_i, w_i) \defn (S_i(\rV_{t-}), S_i(\rW_{t-}))$, where we omit the dependence on $t$ as long as it is clear from the context. Then the two SDEs above become
\begin{align}
  \label{eq:logf_VW_simple_notation}
  d \log f(\rV_t) &= \sum_{i=1}^n \brackets{v_i dt + \int \log \parenth{1-2v_i} \mathbf{1}_{0 < z \leq \frac12 - v_i} \Ni(dt, dz) }, \notag \\
  d \log f(\rW_t) &= \sum_{i=1}^n \brackets{w_i dt + \int \log \parenth{1-2w_i} \mathbf{1}_{0 < z \leq \frac12 - (1 - \delta_i \mathbf{1}_{t \leq \stopT})v_i} \Ni(dt, dz)}.
\end{align}
To complete the proof of Lemma~\ref{lem:approx_monotone_coupling}, we need three lemmas to compare the two SDEs in Eq.~\eqref{eq:logf_VW_simple_notation} with high probability. Their proofs are deferred to Section~\ref{sub:proof_related_to_monotone_coupling}.
\begin{lemma}
  \label{lem:logfV_lower_bound}
  With probability at least $1 - \frac{1}{\sqrt{\log \eta}}$, we have
  \begin{align*}
    \log f (\rV_{\To}) >  \log f (\rV_{\stopT}) - \frac{1}{2} \log\log \eta.
  \end{align*}
\end{lemma}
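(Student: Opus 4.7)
The plan is to exploit a clean martingale identity for the unperturbed reverse dynamics: the reciprocal density $t \mapsto 1/f(\rV_t) = 1/P_{T-t}f(\uV_t)$ is a true $(\sF_t)$-martingale. Granting this, the lemma follows from optional stopping and a single conditional Markov inequality.

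First, I would establish this martingale identity. Writing $\psi(t,x) \defn 1/P_{T-t}f(x)$, a direct It\^o/Dynkin calculation shows $(\partial_t + \LV_t)\psi \equiv 0$: the Kolmogorov backward equation $\partial_t P_{T-t}f = -\fL P_{T-t}f$ gives $\partial_t \psi = \fL(P_{T-t}f)/(P_{T-t}f)^2$, while the reverse generator $\LV_t$, whose jump rate on coordinate $i$ is $\frac12(1 - 2S_i(\rho_t x))$, applied to $\psi$ produces the exact negative of this expression after the $n/(2P_{T-t}f)$ terms cancel. A more conceptual derivation exploits the time reversal $\uV_t = \fU_{T-t}$: applying Bayes' formula to the forward process and invoking $\mu$-reversibility of the heat semigroup, the factor $P_{T-t}f(x)$ cancels from the backward transition probability and the remaining sum collapses to $1$, yielding $\Exs[1/P_{T-t}f(\fU_{T-t}) \mid \fU_{T-s} = y] = 1/P_{T-s}f(y)$ for all $s < t$.

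Next, I would observe that $(\uV_t)$ is autonomous, being driven only by the Poisson random measures $(\Ni)$ and independent of $\uW$ and of $\stopT$. Hence the martingale property extends to the joint filtration $(\sF_t)$, with respect to which $\stopT$ is a bounded stopping time. Since $1/f$ is bounded on the finite state space (as $f > 0$), optional stopping yields $\Exs[1/f(\rV_{\To}) \mid \sF_{\stopT}] = 1/f(\rV_{\stopT})$. The conditional Markov inequality at threshold $\sqrt{\log \eta}/f(\rV_{\stopT})$ then produces
\begin{align*}
\Prob\!\left( f(\rV_{\To}) \leq \frac{f(\rV_{\stopT})}{\sqrt{\log \eta}} \,\middle|\, \sF_{\stopT}\right) \leq \frac{1}{\sqrt{\log \eta}},
\end{align*}
and taking expectations and logarithms yields the claimed bound. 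I do not foresee a significant obstacle: the only real move is to spot the martingale structure of $1/f(\rV_t)$, after which the rest is essentially automatic.
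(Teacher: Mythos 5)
Your proof is correct and follows the same overall route as the paper: recognize that $t \mapsto 1/f(\rV_t) = 1/P_{T-t}f(\uV_t)$ is an $(\sF_t)$-martingale, apply optional stopping at the bounded stopping time $\stopT \leq \To$, and finish with a conditional Markov inequality at threshold $\theta = \sqrt{\log\eta}$. Your derivation of the final bound is correct, including the handling of conditioning on $\sF_\stopT$ and the passage to logarithms.

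Where you differ is only in how the martingale property is verified. The paper constructs the exponential local martingale $\Psi_t = \exp\left[\gamma \int_0^t\int \sum_i \psi_i \Ni - \int_0^t\int \sum_i (e^{\gamma\psi_i}-1)\,dz\,ds\right]$ and specializes to $\gamma = -1$, at which point the compensator collapses to $\int_0^t \sum_i v_i\,ds$ and $\Psi_t$ reduces to $f(\rV_0)/f(\rV_t)$. You instead verify directly that $(\partial_t + \LV_t)(1/P_{T-t}f) \equiv 0$ via the Kolmogorov equation for $P_{T-t}f$ and the explicit form of the reverse jump rate $\tfrac12(1-2S_i(\rho_t x)) = \tfrac12 \cdot f(\flip_i(\rho_t x))/f(\rho_t x)$; both the generator computation and the Bayes/reversibility alternative you sketch check out. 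Your approach is arguably cleaner in isolation (it identifies the classical "reciprocal of the marginal density along the time-reversed process" martingale directly), whereas the paper's choice of the exponential-martingale machinery with free parameter $\gamma$ is dictated by uniformity: the same device is reused with $\gamma = +1$ in the proofs of Lemmas~\ref{lem:wivi_bound} and~\ref{lem:log_diff_bound}, where there is no such clean closed form. Either justification is valid here.
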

Since $\stopT$ is always upper bounded by $\To$, from Eq.~\eqref{eq:logf_VW_simple_notation}, $\log f (\rV_{\To}) -  \log f (\rV_{\stopT})$ has nonnegative drift terms after compensating the Poisson jumps. Lemma~\ref{lem:logfV_lower_bound} states that the compensated Poisson noise does not alter the nonnegativity too much.  
\begin{lemma}
  \label{lem:wivi_bound}
  With probability at least $1 - \frac{1}{\sqrt{\log \eta}}$, we have
  \begin{align*}
    \log f (\rW_{\stopT}) -  \log f (\rW_{\theta}) < \brackets{2 \sum_{i=1}^n \int_\theta^\stopT   (1-\delta_i) w_i v_i dt } + \frac{1}{2} \log\log \eta.
  \end{align*}
\end{lemma}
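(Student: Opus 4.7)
}
My strategy is to produce a drift--martingale decomposition of $\log f(\rW_\stopT) - \log f(\rW_0)$ in which the integral $2\sum_i \int_0^\stopT (1-\delta_i) w_i v_i \, dt$ appears as the full drift term, and then to bound the residual stochastic piece by an exponential martingale combined with Markov's inequality.

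Starting from the second SDE in~\eqref{eq:logf_VW_simple_notation}, I compensate the Poisson random measures. Writing $\phi_i := \frac12 - (1-\delta_i \mathbf{1}_{t<\stopT}) v_i \ge 0$ for the $i$-th jump intensity (nonnegativity from Lemma~\ref{lem:uniqueness_existence_SDE_sol}), the It\^o decomposition is
\[
\log f(\rW_\stopT) - \log f(\rW_0) = \int_0^\stopT \sum_i \big[w_i + \phi_i \log(1-2w_i)\big]\, dt + M^W_\stopT,
\]
where $M^W_t := \sum_i \int_0^t \int \log(1-2w_i) \mathbf{1}_{0<z\le \phi_i}\, \widetilde{N}^{[i]}(ds, dz)$ is a pure jump martingale. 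Introducing $h(w) := -\log(1-2w) - 2w \ge 0$, the identity $\log(1-2w) = -2w - h(w)$ together with $1-2\phi_i = 2(1-\delta_i) v_i$ reduces the drift integrand to $2(1-\delta_i) w_i v_i - \phi_i h(w_i)$, so
\[
\log f(\rW_\stopT) - \log f(\rW_0) = 2\sum_i \int_0^\stopT (1-\delta_i) w_i v_i\, dt - \sum_i \int_0^\stopT \phi_i h(w_i)\, dt + M^W_\stopT.
\]
It therefore suffices to show $M^W_\stopT - \sum_i \int_0^\stopT \phi_i h(w_i)\, dt < \frac12 \log\log\eta$ with probability at least $1 - 1/\sqrt{\log\eta}$.

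The pleasant observation that makes everything work is that this residual is precisely the exponent of the L\'evy stochastic exponential of $M^W$. For jumps of size $\log(1-2w_i)$ at intensity $\phi_i$, the exponential compensator is $e^{\log(1-2w_i)} - 1 - \log(1-2w_i) = -2w_i - \log(1-2w_i) = h(w_i)$, exactly the $h$ that already appeared in the drift. Hence
\[
Z_t := \exp\!\Big(M^W_t - \sum_i \int_0^t \phi_i h(w_i)\, ds\Big)
\]
is a nonnegative local martingale with $Z_0 = 1$, therefore a supermartingale. Since $\stopT \le \To$ is bounded, optional stopping gives $\Exs[Z_\stopT] \le 1$, and Markov's inequality yields $\Prob(Z_\stopT \ge e^\lambda) \le e^{-\lambda}$ for every $\lambda > 0$. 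Choosing $\lambda = \frac12 \log\log\eta$ delivers the desired $1/\sqrt{\log\eta}$ tail, and combining with the decomposition above closes the proof.

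The main thing to be careful about is confirming that $Z$ is a bona fide supermartingale on $[0, \stopT]$ so that optional stopping applies cleanly. This should follow from Lemma~\ref{lem:edge_ratio}, which pins $|\log(1-2w_i)| \le \log\frac{1+e^{-\tau}}{1-e^{-\tau}}$ on $[0,\stopT]$, together with a uniform bound on the total intensity $\sum_i \phi_i$; these force $Z$ to be bounded up to the bounded stopping time $\stopT$, so the local martingale property upgrades to a genuine supermartingale identity. Beyond this verification, the argument is a direct consequence of the clean algebraic coincidence that the jump sizes $\log(1-2w_i)$ are precisely those for which $e^{\text{jump}} - 1 - \text{jump}$ reproduces the excess drift $h(w_i)$.
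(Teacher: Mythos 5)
Your proof is correct and takes essentially the same route as the paper. The exponential process $Z_t = \exp\bigl(M^W_t - \sum_i\int_0^t \phi_i h(w_i)\,ds\bigr)$ you build from the compensated decomposition is identical to the paper's $\Psi_t$ with $\gamma=1$ (the paper writes the exponent directly with the raw Poisson measure rather than splitting off the drift first); the drift-then-cancellation presentation and the use of the supermartingale inequality in place of the martingale equality are cosmetic differences only — a nonnegative local martingale is automatically a supermartingale, so your extra verification step is safe but not needed.
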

In words, Lemma~\ref{lem:wivi_bound} upper bounds $\log f (\rW_{\stopT})$ in terms of the corresponding score terms with high probability.  
\begin{lemma}
  \label{lem:log_diff_bound}
  With probability at least $1-\frac{1}{\sqrt{\log \eta}}$, we have 
  \begin{align*}
    \log f(\rW_\To) - \log f(\rV_\To) < \brackets{- 2 \sum_{i=1}^n \int_\theta^{\stopT} \delta_i \parenth{\mathbf{1}_{v_i > 0} + \mathbf{1}_{v_i \leq 0} \frac{1}{1-2v_i} } w_i v_i  dt } + \frac12 \log\log \eta. 
  \end{align*}
\end{lemma}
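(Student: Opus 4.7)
The plan is to construct an explicit exponential martingale from the ratio $Y_t \defn f(\rW_t)/f(\rV_t)$, identify its drift exactly, and apply Markov's inequality. The specific form of $\delta_i(x)$ in~\eqref{eq:def_delta_i}, in particular the factor $(1-2S_i(x))/(1-2\bdelta S_i(x))$ on the $S_i(x)\leq 0$ branch, is engineered precisely so that the drift of $Y_t$ reduces to the clean quantity appearing on the RHS of the lemma.

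The main step is to apply It\^o's formula for jump processes to $Y_t$ using the coupled SDEs for $\uV$ and $\uW$. In each coordinate $i$ before $\stopT$, the Poisson measure $\Ni$ triggers three types of events: a ``flip-both'' jump at the minimum of the two rates $r_V = \frac12 - v_i$ and $r_W = \frac12 - (1-\delta_i)v_i$, a ``flip-$W$-only'' jump when $v_i>0$ at rate $\delta_i v_i$, and a ``flip-$V$-only'' jump when $v_i<0$ at rate $-\delta_i v_i$. Using the identity $f(\flip_i(x)) = f(x)(1 - 2 S_i(x))$, the corresponding jump of $Y_t$ equals $2(v_i-w_i)Y_{t-}/(1-2v_i)$, $-2w_i Y_{t-}$, and $2 v_i Y_{t-}/(1-2v_i)$ respectively. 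Summing their compensators together with the continuous drift $Y_t\sum_i(w_i-v_i)\,dt$ coming from the scaling $\rho_t$, the terms linear in $v_i-w_i$ cancel and the remaining coordinate-$i$ contribution to the drift of $Y_t$ is exactly
\begin{align*}
  -2\,\delta_i\bigparenth{\mathbf{1}_{v_i>0} + \tfrac{\mathbf{1}_{v_i\leq 0}}{1-2v_i}}\, v_i w_i\, \mathbf{1}_{t<\stopT}\cdot Y_t\,dt.
\end{align*}
After $\stopT$ the two jump rates coincide and the drift of $Y_t$ vanishes. Setting
\begin{align*}
  B_t \defn -2 \sum_{i=1}^n \int_0^{t\wedge \stopT} \delta_i \bigparenth{\mathbf{1}_{v_i>0} + \tfrac{\mathbf{1}_{v_i\leq 0}}{1-2v_i}}\, v_i w_i \, ds,
\end{align*}
integration by parts gives $d(Y_t e^{-B_t}) = Y_t e^{-B_t}\,dN_t$ for a local martingale $N_t$. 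Since $Y_t$ lives on the inner cube $[-e^{-\tau},e^{-\tau}]^n$, on which $f$, $1/f$, the scores, and the perturbations $\delta_i$ are uniformly bounded (by Lemmas~\ref{lem:edge_ratio} and~\ref{lem:uniqueness_existence_SDE_sol}), this local martingale is a true martingale on $[0,\To]$, hence $\Exs[Y_\To e^{-B_\To}] = Y_0 e^{-B_0} = 1$.

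Markov's inequality applied to the nonnegative random variable $Y_\To e^{-B_\To}$ then yields
\begin{align*}
  \Prob\bigparenth{Y_\To e^{-B_\To} \geq \sqrt{\log\eta}} \leq \frac{1}{\sqrt{\log\eta}},
\end{align*}
and taking logarithms on the complementary event produces exactly $\log f(\rW_\To) - \log f(\rV_\To) < B_\To + \frac12 \log\log\eta$, as required. The main obstacle is the drift cancellation in the first step, especially in the $v_i<0$ branch where the ``flip-both'' compensator carries the factor $\frac12 - (1-\delta_i)v_i$: the chosen form of $\delta_i(x)$ on $S_i(x)\leq 0$ is precisely what makes the expansion of this factor produce the $1/(1-2v_i)$ weight in the final expression, so that the exponential correction via $e^{-B_t}$ makes $Y_t e^{-B_t}$ a genuine martingale rather than only a supermartingale and the Markov argument closes cleanly.
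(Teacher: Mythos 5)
Your proof is correct and follows essentially the same route as the paper: the process $Y_t e^{-B_t}$ you construct is exactly the exponential local martingale $\Psi_t$ (with $\gamma=1$) that the paper builds directly from the SDE for $\log f(\rW_t)-\log f(\rV_t)$, and the drift-cancellation, compensator computations, and Markov-inequality step are identical in both. The only difference is cosmetic bookkeeping: you start from the ratio $Y_t = f(\rW_t)/f(\rV_t)$ and deduce the corrector $B_t$ from the drift, while the paper posits $\Psi_t$ in closed form and then identifies $\log\Psi_{\To}$ as $\log f(\rW_{\To})-\log f(\rV_{\To})-B_{\To}$.
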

Roughly speaking, Lemma~\ref{lem:log_diff_bound} shows that $\log f(\rV_\To)$ is larger than $\log f(\rW_\To)$ by a term proportional to $\bdelta$ with high probability. We are now ready to prove Lemma~\ref{lem:approx_monotone_coupling}. 
\begin{proof}[Proof of Lemma~\ref{lem:approx_monotone_coupling}]
  Recall that $\excess = \frac12 \log \log \eta + 1$. Define the events
  \begin{align*}
    \cE_0 &\defn \braces{\log f(\rW_\theta) \notin (\log \eta - \excess, \log \eta + \excess]}, \\
    \cE_1 &\defn \braces{\log f (\rV_{\To}) > \log f (\rV_{\stopT}) - \excess + 1}, \\
    \cE_2 &\defn \braces{\log f (\rW_{\stopT}) -  \log f (\rW_{\theta}) < \brackets{2 \sum_{i=1}^n \int_\theta^\stopT   (1-\delta_i) w_i v_i dt } + (\excess - 1)}, \\
    \cE_3 &\defn \braces{\log f(\rW_\To) - \log f(\rV_\To) < \brackets{- 2 \sum_{i=1}^n \int_\theta^{\stopT} \delta_i \parenth{\mathbf{1}_{v_i > 0} + \mathbf{1}_{v_i \leq 0} \frac{1}{1-2v_i} } w_i v_i dt } + (\excess - 1)}. 
  \end{align*}
  Since $\uW_\theta = \uV_\theta \sim \nu_{P_{T-\theta} f}$ and $f(\rW_\theta) = P_{T-\theta} f(\uW_\theta)$, by the definition of $\fA$ in Eq.~\eqref{eq:def_anti_concentration_profile}, we have
  \begin{align*}
    \Prob(\cE_0^c) = \fA_{T - \theta}((\log \eta - \excess, \log \eta + \excess]).
  \end{align*}
  Together with Lemma~\ref{lem:logfV_lower_bound}, \ref{lem:wivi_bound} and \ref{lem:log_diff_bound}, we have
  \begin{align}
    \label{eq:E123_bound}
    \Prob(\cE_0 \cap \cE_1 \cap \cE_2 \cap \cE_3) \geq 1 - \frac{3}{\sqrt{\log \eta}} - \fA_{T - \theta}((\log \eta - \excess, \log \eta + \excess]).
  \end{align}
  The rest of the proof is conditioned on the intersection of all four events. Our choice of $\delta_i$ allows us to connect the two quantities on the right-hand side. Recall that
\begin{align*}
  \delta_i = \bdelta  \brackets{ \mathbf{1}_{v_i > 0} + \frac{1-2v_i}{1 - 2 \bdelta v_i} \mathbf{1}_{v_i \leq 0}} , \quad i \in [n],\quad x\in [-1,1]^n.
\end{align*}
Observe that
\begin{align*}
  1 - \delta_i &= (1-\bdelta) \brackets{\mathbf{1}_{v_i > 0} + \frac{1}{1-2\bdelta v_i} \mathbf{1}_{v_i \leq 0}} \\
  \delta_i \parenth{\mathbf{1}_{v_i > 0} + \mathbf{1}_{v_i \leq 0} \frac{1}{1-2v_i}}
  &= \bdelta\brackets{\mathbf{1}_{v_i > 0} + \frac{1}{1-2 \bdelta v_i} \mathbf{1}_{v_i \leq 0} }
  \overset{(i)}{=} \frac{\bdelta}{1-\bdelta} (1-\delta_i). 
\end{align*}
(i) uses the expression of $1-\delta_i$ in the first line. Consequently, 
\begin{align*}
  \delta_i w_i v_i \parenth{\mathbf{1}_{v_i > 0} + \mathbf{1}_{v_i \leq 0} \frac{1}{1-2v_i} } = \frac{\bdelta}{1-\bdelta}  \brackets{(1-\delta_i) w_i v_i}.
\end{align*}
This identity combines the estimates on $\cE_2$ and $\cE_3$. Specifically, under $\cE_2 \cap \cE_3$,
\begin{align*}
  \log f(\rW_\To) - \log f(\rV_\To) < -\frac{\bdelta}{1-\bdelta} \brackets{\log f(\rW_\stopT) - \log f(\rW_\theta) - (\excess - 1)} + (\excess - 1).
\end{align*}
Rearranging, we obtain
\begin{align}
  \label{eq:logf_lower_bound_from_stochastic_calculus}
  \log f(\rV_\To) >  \log f(\rW_\To) + \frac{\bdelta}{1-\bdelta} \brackets{\log f(\rW_\stopT) - \log f(\rW_\theta)} - \frac{1}{(1-\bdelta)}(\excess - 1). 
\end{align}
The event $\braces{\log f(\rW_\To) > \log \eta}$ implies that, based on our definition of stopping time~\eqref{eq:def_stopT}, 
\begin{align*}
  \text{ either } \log f(\rV_\stopT) \geq \log \eta + \excess \text{ or } \log f(\rW_\stopT) \geq \log\eta.
\end{align*}
If the first case holds at $\stopT$, then by Lemma~\ref{lem:logfV_lower_bound}, using $\cE_1$
\begin{align*}
  \log f(\rV_\To) > \log f(\rV_\stopT) - \excess + 1  \geq \log \eta + 1.
\end{align*}
If the second case holds at $\stopT$, we split $\cE_0$ into two cases. If $\log f(\rW_\theta) = \log f(\rV_\theta) > \log \eta + \excess$, then $\log f(\rV_\stopT) > \log \eta + \excess$ and the preceding case applies. Otherwise $\log f(\rW_\theta) = \log f(\rV_\theta) \leq \log \eta - \excess$, which implies $R_\theta \geq \excess$. Using Eq.~\eqref{eq:logf_lower_bound_from_stochastic_calculus} and $\cE_2 \cap \cE_3$, we obtain
\begin{align*}
  \log f(\rV_\To) &>  \log \eta + \frac{1}{1-\bdelta}\brackets{\bdelta  R_\theta - (\excess - 1)} \\
  &\overset{(i)}{\geq} \log \eta + \frac{1}{1-\bdelta} \brackets{1 - \bdelta} \\
  &= \log \eta + 1.
\end{align*}
where (i) follows from $\bdelta (R_\theta + 1) \geq \excess$ when $R_\theta \geq \excess$. Combining both cases, we obtain the following implication 
\begin{align*}
  \cE_0 \cap \cE_1 \cap \cE_2 \cap \cE_3 \cap \braces{\log f(\rW_\To) > \log \eta} \implies
  \braces{\log f(\rV_\To) > \log \eta + 1}. 
\end{align*}
Together with Eq.~\eqref{eq:E123_bound}, we conclude that
\begin{align*}
  \Prob\parenth{\log f(\rV_\To) > \log \eta + 1}
    \geq \Prob\parenth{\log f(\rW_\To) > \log \eta} - \frac{3}{\sqrt{\log \eta}} - \fA_{T-\theta}((\log \eta - \excess, \log \eta + \excess]).
\end{align*}
\end{proof}

\begin{remark}
  The main difference between Poisson noise in the Boolean setting and Brownian motion in the Gaussian setting is that the former is not symmetric. This asymmetry is why we have to design state-dependent perturbation and why we lower bound the unperturbed process by the perturbed process while the inequality is reversed in the Gaussian counterpart~\cite{eldan2018regularization}. Lower bounding the noise in Lemma~\ref{lem:log_diff_bound} is more challenging than upper bounding it. 

  In view of the exponential martingale bound in Lemma~\ref{lem:log_diff_bound}, it is reasonable to conjecture that one might be able to obtain a better bound to eliminate the extra $\log\log\eta$ factor via a similar strategy. One possibility is to obtain a better $L^2$ bound on $\sum_{i} (w_i - v_i)^2$ and use a larger $\gamma$ in Eq.~\eqref{eq:exponentiated_process_lem11} in the proof of Lemma~\ref{lem:log_diff_bound}. However, similar to the discussion in Remark~\ref{rmk:remark_after_lem2}, we do not know how to prove a good dimension-free $L^2$ bound. The present proof is designed to avoid such terms.
\end{remark}

\subsection{Proof of Lemma~\ref{lem:anticoncentration_profile_bound} on time-smoothed anti-concentration profile}
\label{sub:time_smoothed_anti_concentration_profile_bound}
In this section, we show that the time-smoothed anti-concentration profile is upper-bounded at a rate $1/\ell$. 
The main idea is to relate the anti-concentration profile $\fA_{s}((\ell, \ell+1])$ to the entropy of a localized test function, and then apply the log-Sobolev inequality.

Recall from Eq.~\eqref{eq:heat_semigroup_generator} that the generator of the heat semigroup on the Boolean hypercube takes the form 
  $\fL h = \frac{1}{2} \sum_{i=1}^n \Delta_i h$. Define its associated Dirichlet form as 
  \begin{align}
    \label{eq:def_Dirichlet_form}
    \dirichlet(g, h) \defn - \int g \fL h d\mu = \frac14 \int \sum_{i=1}^n \Delta_i g \Delta_i h d\mu, \quad g, h: \braces{-1, 1}^n \to \real.
  \end{align}

\begin{proof}[Proof of Lemma~\ref{lem:anticoncentration_profile_bound}]
  Let $E \defn \braces{x \in \braces{-1,1}^n \mid \log P_s f(x) \in (\ell, \ell + 1]}$. Define a localized test function $h_s: \braces{-1, 1}^n \to [0, \infty)$
  \begin{align*}
    h_s(x) \defn P_s f(x) \chi(\log P_s f(x) - \ell)^2,
  \end{align*}
  where $\chi: \real \to [0,1]$ is a $C^1$ cutoff function such that $\chi = 1$ on $[0, 1]$, $\supp \chi \subseteq (-1, 2)$ and $\sup_{x \in \real} \abss{\chi'(x)} \leq c'$ for a universal constant $c'$. Then $h_s = P_s f$ on $E$, and $h_s = 0$ when $\log P_s f(x) \notin (\ell - 1, \ell + 2)$. $h_s$ is designed to localize the mass of $P_s f$ on the set $E$ and to be smooth enough.

  
  First, we relate $\fA_{s}((\ell, \ell+1])$ to the entropy $\Ent_\mu h_s$. Applying the variational form of entropy for $h: \braces{-1, 1}^n \to [0, \infty)$, we have
  \begin{align*}
    \Ent_\mu(h) = \sup \braces{\int h \varphi d\mu \ \middle| \ \int e^{\varphi} d\mu \leq 1}.
  \end{align*}
  Take $\varphi = \ell \mathbf{1}_{E} - \log \brackets{1 + (e^{\ell} - 1) \mu(E)}$. We verify that
  \begin{align*}
    \int e^{\varphi} d\mu = \frac{1}{1+ (e^{\ell} - 1) \mu(E)} \int e^{\ell \mathbf{1}_{E}} d\mu = \frac{e^{\ell}}{1 + (e^{\ell} - 1) \mu(E)} \mu(E) + \frac{1}{1 + (e^{\ell} - 1) \mu(E)} (1 - \mu(E)) = 1.
  \end{align*}
  Then
  \begin{align}
    \label{eq:entropy_vs_anti_concentration_profile}
    \Ent_\mu(h_s) &\geq \int h_s \varphi d\mu \notag \\
    &= \ell \int h_s \mathbf{1}_{E} d\mu - \log \brackets{1 + (e^{\ell} - 1) \mu(E)} \int h_s d\mu \notag \\
    &\overset{(i)}{\geq} \ell \int h_s \mathbf{1}_{E} d\mu - \log \brackets{1 + (e^{\ell} - 1) \mu(E)} \notag \\
    &\overset{(ii)}{\geq} \ell \ \fA_{s} ((\ell, \ell+1]) - \log \brackets{1 + (e^{\ell} - 1) e^{-\ell} \fA_{s} ((\ell, \ell+1])} \notag \\ 
    &\overset{(iii)}{\geq} \ell \ \fA_{s} ((\ell, \ell+1]) -  \fA_{s} ((\ell, \ell+1]) \notag \\ 
    &\geq \frac{\ell}{2} \ \fA_{s} ((\ell, \ell+1]).
  \end{align}
  (i) follows from the definition of $h_s$ and $\int h_s d\mu \leq \int P_s f d\mu \leq 1$. (ii) follows from the fact that $h_s = P_s f$ on $E$, so $\mu(E) \leq e^{-\ell} \int P_s f \mathbf{1}_{E} d\mu = e^{-\ell} \fA_{s} ((\ell, \ell+1])$. (iii) follows from $\log(1+x) \leq x$ for all $x > -1$. The last line uses $\ell \geq 2$.

  Second, applying the log-Sobolev inequality on the Boolean hypercube (\cite{gross1975logarithmic} or Exercise 10.23 in~\cite{o2014analysis} for a comprehensive proof), we obtain
  \begin{align}
    \label{eq:app_log_sobolev}
    \Ent_\mu h_s &\leq 2 \dirichlet(\sqrt{h_s}, \sqrt{h_s})
  \end{align}
  for the Dirichlet form $\dirichlet$ defined in Eq.~\eqref{eq:def_Dirichlet_form}. 

  On the other hand, define the excess mass above the level $e^{u}$ at time $s$ as
  \begin{align*}
    F_s(u) \defn \int (P_s f - e^{u}) \mathbf{1}_{P_s f > e^{u}} d\mu.
  \end{align*}
  Since $s \mapsto P_s f$ is $C^1$ and the state space is finite, $s \mapsto F_s(u)$ is absolutely continuous for any fixed $u$. The chain rule for Lipschitz functions implies that for almost every $s$,
  \begin{align}
    \label{eq:derivative_F_s}
    \frac{d}{ds} F_s(u) = \int \fL P_s f \mathbf{1}_{P_s f > e^{u}} d\mu = - \dirichlet(\mathbf{1}_{P_s f > e^{u}}, P_s f).
  \end{align}
  Note that $\dirichlet(\mathbf{1}_{P_s f > e^{u}}, P_s f) = \frac14 \int \sum_i \Delta_i \mathbf{1}_{P_s f > e^{u}}  \Delta_i P_s f d\mu \geq 0$, because the sign of $\Delta_i \mathbf{1}_{P_s f > e^{u}}$ is the same as the sign of $\Delta_i P_s f$. 
  

  We claim that there exists a universal constant $c > 0$ such that
  \begin{align}
    \label{eq:claim_dirichlet_comparison}
    \dirichlet(\sqrt{h_s}, \sqrt{h_s}) \leq c \int_{\ell-1}^{\ell+2} \dirichlet(\mathbf{1}_{P_s f > e^{u}}, P_s f) du.
  \end{align}
  Deferring the proof of this claim, we are ready to conclude the proof of Lemma~\ref{lem:anticoncentration_profile_bound}. Combining Eq.~\eqref{eq:entropy_vs_anti_concentration_profile}, \eqref{eq:app_log_sobolev} and the claim~\eqref{eq:claim_dirichlet_comparison}, we obtain
  \begin{align*}
    \fA_{s} ((\ell, \ell+1]) &\leq \frac{2}{\ell} \Ent_\mu h_s \\
    &\leq \frac{4}{\ell} \dirichlet(\sqrt{h_s}, \sqrt{h_s}) \\
    &\leq \frac{c}{\ell} \int_{\ell-1}^{\ell+2} \dirichlet(\mathbf{1}_{P_s f > e^{u}}, P_s f) du \\
    &\leq - \frac{c}{\ell} \int_{\ell-1}^{\ell+2} \frac{d}{ds} F_s(u) du.
  \end{align*}
  Integrating $s$ from $0$ to $\infty$, applying Fubini's theorem and using the definition of $F_s(u)$, we obtain
  \begin{align*}
    \int_{0}^\infty \fA_{s} ((\ell, \ell+1]) ds &\leq \frac{c}{\ell} \int_{\ell-1}^{\ell+2} \brackets{F_0(u) - F_\infty(u)} du \leq \frac{3c}{\ell}.
  \end{align*}
  The last step uses the fact that $F_0(u) \leq \int P_0 f d\mu = \int f d\mu = 1$ and $F_{s}(u) \geq 0, \forall s, u$.

  \paragraph{Proof of the claim~\eqref{eq:claim_dirichlet_comparison}} The proof is a discrete version of the coarea formula. We have
  \begin{align*}
    \sqrt{h_s} = e^{\frac12 \log P_s f} \chi(\log P_s f - \ell) = \psi(\log P_s f),
  \end{align*}
  where we define $\psi(u) \defn e^{\frac12 u} \chi(u - \ell)$. We bound
  \begin{align*}
    \abss{\psi'(u)} = \abss{\frac12 e^{\frac12 u} \chi(u - \ell) + e^{\frac12 u} \chi'(u - \ell)} &\leq \parenth{\frac12 + c'} e^{\frac12 u} \mathbf{1}_{u \in (\ell - 1, \ell + 2)}.
  \end{align*}
  For $a, b \in \real$, define $[a, b)_* \defn [\min\braces{a, b}, \max\braces{a, b})$. Since $\psi(b) - \psi(a) = \int_a^b \psi'(u) du$, by Cauchy-Schwarz inequality, we have
  \begin{align*}
    \abss{\psi(b) - \psi(a)}^2 &= \abss{\int_a^b \psi'(u) du}^2 \\
    &\lesssim \abss{\int_a^b \mathbf{1}_{u \in (\ell - 1, \ell + 2)} du} \abss{\int_a^b e^{u} \mathbf{1}_{u \in (\ell - 1, \ell + 2)} du} \\
    &\lesssim \int_{\ell-1}^{\ell+2} \mathbf{1}_{\braces{u \in [a, b)_*} } du  \abss{e^b - e^a}.
  \end{align*}
  Applying the above inequality to $a = \log P_s f(x)$ and $b = \log P_s f(\flip_i (x))$, we obtain
  \begin{align*}
    \abss{\Delta_i \sqrt{h_s}(x)}^2 \lesssim \int_{\ell-1}^{\ell+2} \mathbf{1}_{\braces{e^u \in [P_s f(x), P_s f(\flip_i (x)))_*} } du \abss{\Delta_i P_s f(x)}.
  \end{align*}
  On the other hand, we have
  \begin{align*}
    \dirichlet(\mathbf{1}_{P_s f > e^{u}}, P_s f) & = \frac14 \sum_{i=1}^n \int \Delta_i \mathbf{1}_{P_s f > e^{u}} \Delta_i P_s f d\mu \\
    &= \frac14 \sum_{i=1}^n \int \mathbf{1}_{\braces{e^u \in [P_s f(x), P_s f(\flip_i (x)))_* }}  |\Delta_i P_s f(x)| d\mu.
  \end{align*}
  The last step follows because the sign of $\Delta_i \mathbf{1}_{P_s f > e^{u}}$ is the same as the sign of $\Delta_i P_s f$, which makes it nonnegative. We conclude by noting that $\dirichlet(\sqrt{h_s}, \sqrt{h_s}) = \frac14 \sum_{i=1}^n \int |\Delta_i \sqrt{h_s}(x)|^2 d\mu$ and comparing the two expressions above.
\end{proof}
\begin{remark}
  The proof of Lemma~\ref{lem:anticoncentration_profile_bound} can be summarized in one line as
  \begin{align*}
    \ell \int_0^\infty \fA_{s} ((\ell, \ell+1]) ds \lesssim  \int_0^\infty \Ent_\mu h_s ds  \lesssim \int_0^\infty \dirichlet(\sqrt{h_s}, \sqrt{h_s}) \lesssim \int_{\ell-1}^{\ell+2} \int_0^\infty \dirichlet(\mathbf{1}_{P_s f > e^{u}}, P_s f) ds du \lesssim 1.
  \end{align*}
  Vaguely speaking, the $1/\ell$ factor arises from translating the mass of the logarithmic band set (anti-concentration profile) to the entropy of a localized $P_s f$. Log-Sobolev inequality upper bounds the entropy by the localized Fisher information $\dirichlet(\sqrt{h_s}, \sqrt{h_s})$. The discrete coarea formula relates the localized Fisher information to the weighted boundary measure $\int_{\ell-1}^{\ell+2}\dirichlet(\mathbf{1}_{P_s f > e^{u}}, P_s f) du$, which is exactly the rate at which the heat semigroup sends mass above the level $e^{u}$. The last step uses the fact that the total mass sent across all level sets is bounded.

  The localized Fisher information and the weighted boundary measure are treated separately due to the lack of a chain rule in the discrete setting. Heuristically,
  \begin{align*}
    \dirichlet(\sqrt{h_s}, \sqrt{h_s}) &\approx \sum_i \frac{\abss{\Delta_i P_s f}^2}{P_s f } \mathbf{1}_{\log P_s f \approx \ell}, \\
    \int_{\ell-1}^{\ell+2} \dirichlet(\mathbf{1}_{P_s f > e^{u}}, P_s f) du &\approx \sum_i \abss{\Delta_i P_s f} \abss{\Delta_i \log P_s f} \mathbf{1}_{\log P_s f \approx \ell}.
  \end{align*}

  Finally, in the product function example $f(x) = \prod_{i=1}^n (1+x_i)$, $\fA_{s} ((\ell, \ell+1])$ can be as large as $c_s /\sqrt{\ell}$ with a constant $c_s$ that depends only on $s$. Lemma~\ref{lem:anticoncentration_profile_bound} shows that its time-smoothed version is of smaller order $1/\ell$. The main intuition is that the peak value of $c_s$ cannot persist for a long time.
\end{remark}

\section{Technical proofs of auxiliary lemmas}
In this section, we complete the missing proofs. 
\subsection{Proofs on the well-posedness of the joint reverse SDE}
\label{sub:proof_related_to_smoothness}
We prove Lemma~\ref{lem:edge_ratio} and Lemma~\ref{lem:uniqueness_existence_SDE_sol}.

\begin{proof}[Proof of Lemma~\ref{lem:edge_ratio}]
  Note that for $z \in \braces{-1, 1}^n$, we may write
  \begin{align*}
    f(z) = \sum_{y \in \braces{-1, 1}^n} f(y) \prod_{j=1}^n \parenth{\frac{1+y_j z_j}{2}}.
  \end{align*}
  This expression gives an alternative multilinear polynomial expansion, which takes the value $f(y)$ for any $y \in \braces{-1,1}^n$. By uniqueness of the multilinear polynomial expansion, we conclude that this polynomial agrees with the one in Eq.~\eqref{eq:multilinear_expansion}. It shows that for any $z \in [-1, 1]^n$, $f(z) \geq 0$,
  because $f(y) \geq 0$ for $y\in \braces{-1, 1}^n$ and $1+y_j z_j \geq 0$. Since $\sum_{y \in \braces{-1, 1}^n} \prod_{j=1}^n \parenth{\frac{1+y_j z_j}{2}} = 1$, if $f$ is $\braces{0,1}$-valued, then as a weighted average, $f(z) \in [0,1]$ for $z \in (-1,1)^n$. Additionally, for any $z\in (-1,1)^n$, because $1 - \abss{z_j} \leq 1+y_j z_j \leq 1 + \abss{z_j}$, 
  \begin{align*}
    \vecnorm{f}{1} \prod_{j=1}^n (1-\abss{z_j}) \leq f(z) \leq \vecnorm{f}{1} \prod_{j=1}^n (1+\abss{z_j}).
  \end{align*}

  Now for $z\in [-1, 1]^n$, write
  \begin{align*}
    f(z) &= (1+z_i) \underbrace{\sum_{y \in \braces{-1, 1}^n, y_i = 1} \frac12   f(y) \prod_{j \in [n], j \neq i} \parenth{\frac{1+y_j z_j}{2}} }_{=: R_1} \\
    &\qquad + (1-z_i) \underbrace{\sum_{y \in \braces{-1, 1}^n, y_i = -1} \frac12 f(y) \prod_{j \in [n], j \neq i} \parenth{\frac{1+y_j z_j}{2}} }_{=: R_2}. 
  \end{align*}
  Note that $R_1 \geq 0$ and $R_2 \geq 0$, for the same reason that $f(z) \geq 0$. We have
  \begin{align*}
    f(z) &= (1+z_i)R_1 + (1-z_i)R_2 \\
    f(\flip_i(z)) &=  (1-z_i)R_1 + (1+z_i)R_2
  \end{align*}
  Then
  \begin{align*}
    (1 - \abss{z_i}) (R_1 + R_2) &\leq f(z) \leq (1 + \abss{z_i}) (R_1 + R_2), \\
    (1 - \abss{z_i}) (R_1 + R_2) &\leq f(\flip_i(z)) \leq (1 + \abss{z_i}) (R_1 + R_2).
  \end{align*}
  Therefore,
  \begin{align*}
    \frac{1 - \abss{z_i}}{1 + \abss{z_i}} \leq \frac{f(\flip_i(z))}{f(z)} \leq \frac{1 + \abss{z_i}}{1 - \abss{z_i}}.
  \end{align*}
\end{proof}

\begin{proof}[Proof of Lemma~\ref{lem:uniqueness_existence_SDE_sol}]
  First, since a strictly positive $f$ is considered, the jump rate in Eq.~\eqref{eq:def_reverse_process} is always finite on a finite state space. Second, Lemma~\ref{lem:edge_ratio} gives
\begin{align}
  \label{eq:score_bound}
  \frac{-\abss{x_i}}{1-\abss{x_i}} \leq S_i(x) \leq  \frac{\abss{x_i}}{1+\abss{x_i}}, \quad \forall x \in (-1, 1)^n.
\end{align}
In particular, we obtain that for $t \in [0, \To]$, 
\begin{align*}
  \frac{1-e^{-\tau}}{1+e^{-\tau}} \leq 1 - 2 S_i(e^{-(T-t)} \uV_{t-}) \leq \frac{1+e^{-\tau}}{1-e^{-\tau}}.
\end{align*}
Back to Eq.~\eqref{eq:def_SDE_perturbed_reverse_process}, we verify that the jump rate on the $i$-th coordinate of $\uW_t$ is nonnegative and finite, as long as $0 \leq \bdelta < 1$. For $t\in [\theta,T]$, $x = e^{-(T-t)} \uV_{t-}$,
\begin{align*}
  \frac{1}{2} - S_i(x) + \delta_i \mathbf{1}_{t \leq \stopT} S_i(x) = 
  \begin{cases}
    \displaystyle \frac{1}{2} - S_i(x) + \bdelta \mathbf{1}_{t \leq \stopT} S_i(x) \geq \frac{1}{2} - S_i(x) \geq 0 & \text{ if } S_i(x) > 0, \\
    \displaystyle \parenth{\frac{1}{2} - S_i(x)} \brackets{\mathbf{1}_{t > \stopT} + \mathbf{1}_{t \leq \stopT}\frac{1}{1-2\bdelta S_i(x)} }  \geq 0 & \text{ otherwise}.
  \end{cases}
\end{align*}
Existence and uniqueness of the solution to the joint SDE~\eqref{eq:def_reverse_process} and~\eqref{eq:def_SDE_perturbed_reverse_process} follow from the strong solution theorem for stochastic equations driven by Poisson random measures \cite[Chapter XIV, Definition 14.5 and Theorem 14.23]{jacod1979calcul}. We verify the hypotheses of the strong solution theorem. On the finite-state space $\braces{-1,1}^{2n}$, the jump rate is bounded and predictable, because $\mathbf{1}_{t \leq \stopT}$ is left-continuous and adapted, hence predictable. The process satisfies the Lipschitz condition in the sense of~\cite[Definition 14.14]{jacod1979calcul} because on $\braces{-1,1}^{2n}$, if two histories of the process agree up to time $t$, then the jump rate at time $t$ is the same; otherwise their distance is at least $2$ and since the jump rate is bounded, the Lipschitz condition is trivially satisfied. The growth condition in the sense of~\cite[Definition 14.22]{jacod1979calcul} follows from the boundedness of the jump size and jump rate.

Additionally, as long as $0 \leq \bdelta < 1$, we have
\begin{align*}
  0 \leq \mathbf{1}_{S_i(x) > 0} + \frac{1-2 S_i(x)}{1 - 2 \bdelta S_i(x)} \mathbf{1}_{S_i(x) \leq 0} \leq \frac{1+e^{-\tau}}{1-e^{-\tau}}, \quad \forall x \in (-e^{-\tau}, e^{-\tau})^n,
\end{align*}
which leads to $0 \leq \delta_i \leq \btau \bdelta < \btau$.
\end{proof}

\subsection{Proofs related to Lemma~\ref{lem:coupling_TV_distance_control}}
\label{sub:proof_related_to_tv_lemma}
Here we prove Lemma~\ref{lem:level_1_ineq},~\ref{lem:expected_squared_score_bound},~\ref{lem:boolean_bridge_formula},~\ref{lem:Doob_h_transform} and~\ref{lem:weighted_energy_estimate}.

\begin{proof}[Proof of Lemma~\ref{lem:level_1_ineq}\label{proof_lem:level_1_ineq}]
  We first introduce the basics of $p$-biased analysis on the Boolean hypercube. For a reference, see~\cite{eldan2020concentration} or Section 8 of~\cite{o2014analysis}. Let $p = (p_1, \ldots, p_n) \in (0, 1)^n$, and let $\mu_p$ be the following product measure on $\braces{-1, 1}^n$, with
\begin{align*}
  \mu_p(y) \defn \prod_{i=1}^n \frac{1+y_i(2p_i - 1)}{2},
\end{align*}
where the $i$-th marginal takes value $1$ with probability $p_i$ and value $-1$ with probability $1-p_i$. For $i \in [n]$, define $\phi_i: \braces{-1, 1}^n \to \real$ as 
\begin{align}
  \label{eq:phi_basis_def}
  \phi_i(y) \defn \frac{1}{2} \parenth{\frac{1-2p_i}{\sqrt{p_i (1-p_i)}} + y_i \frac{1}{\sqrt{p_i (1-p_i)}} }.
\end{align}
For $S \subseteq [n]$, define $\phi_S \defn \prod_{i\in S} \phi_i$, and $\phi_\emptyset \defn 1$ by convention. 

\begin{proposition}[Proposition 31 in~\cite{eldan2020concentration}]
  \label{prop:p_biased_analysis}
  $(\phi_S)_{S \subseteq [n]}$ form an orthonormal basis for the space of functions on $\braces{-1, 1}^n$, equipped with the inner product $\angles{g, h}_{\mu_p} \defn \Exs_{\mu_p}[g h]$. Any function $h: \braces{-1, 1}^n \to \real$ can be written as 
  \begin{align*}
    h(y) = \sum_{S \subseteq [n]} \hat{h}_p(S) \phi_S(y), \quad \forall y \in \braces{-1, 1}^n,
  \end{align*}
  where $\hat{h}_p(S) \defn \Exs_{\mu_p}[h \phi_S]$ are called the $p$-biased Fourier coefficients of $h$. Additionally, let $\mathfrak{S} = \braces{i_1, \ldots, i_k} \subseteq [n]$ be a set of indices and $z \in (-1, 1)^n$, if $p = \frac{1+z}{2}$, then
  \begin{align}
    \label{eq:derivative_to_Fourier}
    \partial_{i_1} \ldots \partial_{i_k} h(z) = \parenth{\prod_{i\in \mathfrak{S}} \frac{1}{\sqrt{1-z_i^2}}} \hat{h}_p(\mathfrak{S}). 
  \end{align}
\end{proposition}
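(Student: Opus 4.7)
The plan is to prove Proposition~\ref{prop:p_biased_analysis} in three steps, matching the three assertions: (i) orthonormality of $\{\phi_S\}$ in $L^2(\mu_p)$, (ii) the resulting Fourier expansion, and (iii) the derivative-to-coefficient identity.

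First I would verify that each single-coordinate function $\phi_i$ defined in Eq.~\eqref{eq:phi_basis_def} is centered and has unit variance under $\mu_p$. Writing $\mathbb{E}_{\mu_p}[y_i] = 2p_i - 1$ directly gives $\mathbb{E}_{\mu_p}[\phi_i] = 0$, while $\mathrm{Var}_{\mu_p}(y_i) = 4p_i(1-p_i)$ combined with the $\frac{1}{2\sqrt{p_i(1-p_i)}}$ normalization gives $\mathbb{E}_{\mu_p}[\phi_i^2] = 1$. Then, because the coordinates are independent under the product measure $\mu_p$, for any $S, T \subseteq [n]$ I factor
\[
\mathbb{E}_{\mu_p}[\phi_S \phi_T] = \prod_{i \in S \cap T} \mathbb{E}_{\mu_p}[\phi_i^2] \cdot \prod_{i \in S \triangle T} \mathbb{E}_{\mu_p}[\phi_i],
\]
which equals $1$ if $S = T$ and $0$ otherwise (any index in the symmetric difference kills the product). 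This gives $2^n$ orthonormal vectors in the $2^n$-dimensional space of functions $\{-1,1\}^n \to \real$, hence a basis, proving (i). Assertion (ii) is then just basis expansion with coordinates $\hat{f}_p(S) = \langle f, \phi_S \rangle_{\mu_p}$.

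For (iii), the key simplification is to plug $p_i = (1+z_i)/2$, which gives $p_i(1-p_i) = (1-z_i^2)/4$ and therefore the cleaner expression $\phi_i(y) = (y_i - z_i)/\sqrt{1-z_i^2}$. Using this, the claim is equivalent to
\[
\mathbb{E}_{\mu_p}\!\Big[f(y) \prod_{i \in \mathfrak{S}} (y_i - z_i)\Big] = \Big(\prod_{i \in \mathfrak{S}} (1-z_i^2)\Big) \partial_{i_1}\cdots \partial_{i_k} f(z).
\]
I would prove this by expanding $f$ in its multilinear form $f(y) = \sum_T \hat{f}(T) y^T$ (Eq.~\eqref{eq:multilinear_expansion}) and then using the independence of coordinates under $\mu_p$, together with the identities $\mathbb{E}_{\mu_p}[y_i] = z_i$ and $\mathbb{E}_{\mu_p}[y_i(y_i - z_i)] = 1 - z_i^2$, to evaluate the expectation one coordinate at a time. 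For each $T$, coordinate $i$ contributes a factor of $(1-z_i^2)$ if $i \in T \cap \mathfrak{S}$, a factor of $z_i$ if $i \in T \setminus \mathfrak{S}$, a factor of $0$ if $i \in \mathfrak{S} \setminus T$, and a factor of $1$ if $i \notin T \cup \mathfrak{S}$. The vanishing of the third case forces $\mathfrak{S} \subseteq T$, which reduces the sum to $\prod_{i \in \mathfrak{S}}(1-z_i^2) \cdot \sum_{T \supseteq \mathfrak{S}} \hat{f}(T) z^{T \setminus \mathfrak{S}}$. Recognizing the inner sum as precisely $\partial_{i_1}\cdots \partial_{i_k} f(z)$ from differentiating the multilinear expansion and evaluating at $z$ closes the identity.

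The main conceptual obstacle is step (iii), but it is really a bookkeeping calculation: the simplification $\phi_i(y) = (y_i-z_i)/\sqrt{1-z_i^2}$ is what makes everything fall into place, because then the expectation against $\prod_{i\in\mathfrak{S}}(y_i - z_i)$ acts exactly like a partial derivative in the multilinear representation (the factor $(y_i - z_i)$ annihilates any monomial not containing $y_i$ and leaves $1 - z_i^2$ on monomials that do). All remaining steps are routine consequences of independence and linearity.
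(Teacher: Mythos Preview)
Your proof is correct. Parts (i) and (ii) follow the same line as the paper's argument (mean-zero, unit-variance, product-measure factorization over the symmetric difference, then a dimension count). For part (iii) you take a genuinely different route from the paper. The paper views the $p$-biased expansion $f = \sum_S \hat f_p(S)\,\phi_S$ itself as a multilinear polynomial in $y$, differentiates it in the $\mathfrak{S}$-coordinates, and then evaluates at $y=z$: since $\phi_j(z)=0$ for every $j$, any term with $S\supsetneq\mathfrak{S}$ vanishes and only $S=\mathfrak{S}$ survives, with each $\partial_i$ contributing the factor $\tfrac{1}{2\sqrt{p_i(1-p_i)}} = (1-z_i^2)^{-1/2}$. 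You instead compute the coefficient $\hat f_p(\mathfrak{S}) = \Exs_{\mu_p}[f\,\phi_{\mathfrak{S}}]$ directly by expanding $f$ in the \emph{unbiased} basis $\{y^T\}$ and evaluating the expectation coordinate by coordinate, which forces $T\supseteq\mathfrak{S}$ and recovers $\sum_{T\supseteq\mathfrak{S}}\hat f(T)\,z^{T\setminus\mathfrak{S}} = \partial_{i_1}\cdots\partial_{i_k} f(z)$. The paper's approach is slightly slicker --- one vanishing observation replaces your four-case bookkeeping --- while your approach is more self-contained, since it never needs to treat the $p$-biased expansion as a polynomial identity on the open cube.
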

Note that Eq.~\eqref{eq:derivative_to_Fourier} differs from that in Proposition 31 in~\cite{eldan2020concentration} by a factor $4$ on each term in the product. For completeness, we record a proof of the identity we need in Appendix~\ref{proof:proof_of_prop1}.

Take $p = \frac{1+x}{2}$. Applying Parseval's inequality, we have
\begin{align}
  \label{eq:Parseval_h}
  \angles{h ,h}_{\mu_p} \geq \hat{h}_p(\emptyset)^2 + \sum_{i=1}^n \hat{h}_{p}(\braces{i})^2.
\end{align}
Note that $\angles{h ,h}_{\mu_p} = \Exs_{\mu_p} h^2 \overset{(i)}{=}  \Exs_{\mu_p} h = \hat{h}_p(\emptyset)$, where (i) follows from the fact that $h$ takes value in $\braces{0, 1}$. Additionally, according to Proposition~\ref{prop:p_biased_analysis}, 
\begin{align*}
  \hat{h}_p(\emptyset) = h(x), \ \hat{h}_p(\braces{i}) = \sqrt{1-x_i^2} \partial_i h(x).
\end{align*}
The result follows by substituting these values into Parseval's inequality~\eqref{eq:Parseval_h}. 
\end{proof}

\begin{proof}[Proof of Lemma~\ref{lem:expected_squared_score_bound}\label{proof_lem:expected_squared_score_bound}]
  It is more convenient to work with the scaled reverse process $\rV_t = \rho_t \uV_t$. Applying It\^o's formula to $\log f (\rV_t)$, we obtain 
  \begin{align*}
    d \log f(\rV_t) &= \sum_{i=1}^n \brackets{S_i(\rV_{t-}) dt + \int \log \parenth{1-2S_i(\rV_{t-})} \mathbf{1}_{0 < z \leq \frac12 - S_i(\rV_{t-})} \Ni(dt, dz) }\\
    &= \sum_{i=1}^n \brackets{S_i(\rV_{t-}) + \parenth{\frac12 - S_i(\rV_{t-})} \log \parenth{1-2S_i(\rV_{t-})}  }  dt \\
    &\quad \quad \quad + \sum_{i=1}^n \int \log \parenth{1-2S_i(\rV_{t-})} \mathbf{1}_{0 < z \leq \frac12 - S_i(\rV_{t-})} \wNi(dt, dz).
  \end{align*}
  We claim that 
  \begin{align}
    \label{eq:claim_lowerbound_xlogx}
    \frac{x\log(x) - x + 1}{(x-1)^2} \geq \frac{\log \btau}{2 (\btau - 1)} \geq \frac{1}{2\btau}, \quad \text{ for } \frac{1}{\btau} \leq x \leq \btau.
  \end{align}
  For $t \leq \To = T-\tau$, Lemma~\ref{lem:edge_ratio} implies that $\frac{1}{\btau} \leq 1-2 S_i(\rV_t) \leq \btau$. Applying the claim, by optional stopping theorem, integrating up to a bounded stopping time $\stopT$ and taking expectations yields
  \begin{align*}
    & \Exs \brackets{\log f(\rV_{\stopT}) \middle| \sF_\theta} \geq \log f(\rV_\theta) + \frac{\log \btau}{\btau - 1} \Exs\brackets{ \int_\theta^{\stopT} \sum_{i=1}^n S_i(\rV_{t-})^2 dt \middle| \sF_\theta}.
  \end{align*}
  The martingale term has expectation $0$ given $\sF_\theta$ since it has bounded jumps and finite quadratic variation on $[\theta, \To]$. If $\stopT = \theta$, the integral is $0$ and the estimate is trivial. If $\stopT > \theta$, then for $t<\stopT$, $\log f(\rV_{t}) \leq \log \eta + \frac12\log\log \eta + 1$ by the stopping time definition~\eqref{eq:def_stopT}. Almost surely, there is at most one jump at time $\stopT$ with size at most $\log(1-2S_i(\rV_{\stopT})) \leq \log \btau$. We conclude that
  \begin{align*}
    \Exs \brackets{\int_\theta^{\stopT} \sum_{i=1}^n S_i(\rV_{t-})^2 dt \middle| \sF_\theta} &\leq \frac{\btau - 1}{\log \btau} \parenth{R_\theta + \excess + \log \btau} \\
    &\leq \btau \parenth{R_\theta + \excess + 1}.
  \end{align*}

  \paragraph{Verify the claim~\eqref{eq:claim_lowerbound_xlogx}} Consider the function $q(x) \defn \frac{x\log(x) - x + 1}{(x-1)^2}$. Differentiating to obtain
  \begin{align*}
    q'(x) = \frac{x+1}{(x-1)^2} \brackets{ \frac{2}{x+1} - \frac{\log x}{x-1}} = \frac{x+1}{(x-1)^2} \brackets{ \frac{2}{x+1} - \frac{\int_1^x \frac{1}{t} dt}{x-1}}.
  \end{align*}
  Since $1/t$ is convex, for $x > 1$, by Jensen's inequality, $\frac{1}{x-1}\int_1^x \frac{1}{t} dt \geq \frac{1}{\frac{x+1}{2}} = \frac{2}{x+1}$. For $x < 1$, $\frac{1}{1-x}\int_x^1 \frac{1}{t} dt \geq \frac{1}{\frac{x+1}{2}} = \frac{2}{x+1}$. $q$ is non-increasing on $(0, \btau)$. Hence, $q(x) \geq q(\btau)$ for $\frac{1}{\btau} \leq x \leq \btau$.
  Note that
  \begin{align*}
    q(\btau) = \frac{1}{(\btau-1)^2} \int_1^{\btau} \log t dt
  \end{align*}
  Since $\log t$ is strictly concave, the integral $\int_1^{\btau} \log t dt$ is lower bounded by the area of the triangle formed by the points $(1, 0), (\btau, \log \btau)$ and the $x$-axis, which is $\frac{\log \btau}{2} (\btau - 1)$. Hence,
  \begin{align*}
    q(\btau) \geq \frac{\log \btau}{2 (\btau - 1)} \geq \frac{1}{2 \btau}.
  \end{align*}
\end{proof}

\begin{proof}[Proof of Lemma~\ref{lem:boolean_bridge_formula}\label{proof_lem:boolean_bridge_formula}]
  Let $K_t$ denote the heat kernel
  \begin{align}
    \label{eq:heat_kernel}
    K_t(x, y) \defn \Prob(U_t = y \mid U_0 = x) = \prod_{i=1}^n \parenth{\frac{1+e^{-t} x_i y_i}{2}}, \quad x, y \in \braces{-1, 1}^n.
  \end{align}
  Conditioned on $\uV_t = x$ and $\uV_T = \zeta$, for $t \leq \To \leq T$, the law of $\uV_\To$ is the heat bridge. That is, by Bayes' rule and by identifying the law of $(\uV_t)_{t \in [0, T]}$ with that of $(U_{T-t})_{t \in [0, T]}$, we have
  \begin{align*}
    \Prob(\uV_{\To} = z \mid \uV_t = x, \uV_{T} = \zeta) &= \Prob(U_{\tau} = z \mid U_{T-t} = x, U_0 = \zeta) \\
    &= \frac{\Prob(U_{\tau} = z, U_{T-t} = x \mid U_0 = \zeta)}{\Prob(U_{T-t} = x \mid U_0 = \zeta)} \\
    &= \frac{\Prob(U_{T-t} = x \mid U_{\tau} = z, U_0 = \zeta) \Prob(U_{\tau} = z \mid U_0 = \zeta)}{\Prob(U_{T-t} = x \mid U_0 = \zeta)} \\
    &\overset{(i)}{=} \frac{K_{\To-t}(z, x) K_{\tau}(\zeta, z)}{K_{T-t}(\zeta, x)},
\end{align*}
  where (i) follows from Markov property of $U_t$. In particular, this implies that the coordinates are conditionally independent, and the one-coordinate law is
  \begin{align*}
    \Prob\parenth{\uV_{\To}^{[i]} = z_i \mid \uV_t = x, \uV_{T} = \zeta} =  \frac{ \frac{1+ \rho_t/\rho_{\To} z_i x_i }{2} \cdot \frac{1 + \rho_\To z_i \zeta_i}{2}  }{\frac{1+\rho_t x_i \zeta_i}{2}}.
  \end{align*}
  Its expectation takes the form
  \begin{align*}
    \Exs\brackets{\uV_{\To}^{[i]} \mid \uV_t = x, \uV_{T} = \zeta} &= \frac{\rho_\To \zeta_i + \rho_t/\rho_{\To} x_i}{1+\rho_t x_i \zeta_i}
  \end{align*}
  Multiplying coordinate-wise by $x_i y_i$, we obtain
  \begin{align*}
    m_{t}^{[i]}(x, y, \zeta) = \frac{\rho_t/\rho_{\To} y_i + \rho_\To \sigma_i}{1+\rho_t x_i \zeta_i},
  \end{align*}
  where $\sigma_i = x_i y_i \zeta_i$.
  Using the fact that $\frac{1}{1+\rho_t x_i \zeta_i} = \frac{1 - \rho_t x_i \zeta_i}{1 - \rho_t^2}$, we simplify the denominator and obtain the desired expression with $a_{t}$ and $b_{t}$ defined in Eq.~\eqref{eq:def_a_b} as follows
  \begin{align*}
      m_{t, \To}^{[i]}(x, y, \zeta) = \frac{ (\rho_t/\rho_{\To} - \rho_t \rho_\To) y_i + (\rho_\To - \rho_t^2 /\rho_\To) \sigma_i }{1-\rho_t^2} = a_{t} y_i + b_{t} \sigma_i. 
  \end{align*} 
  We have identified the coefficients
  \begin{align*}
    a_{t} &= \frac{1/\rho_{\To} - \rho_\To}{1/\rho_t-\rho_t} = \frac{\sinh(T-\To)}{\sinh(T-t)},\\
    b_{t} &= \frac{\rho_\To/\rho_t - \rho_t /\rho_\To}{1/\rho_t-\rho_t} = \frac{\sinh(\To-t)}{\sinh(T-t)}.
  \end{align*}
  
  For the derivatives, note that flipping $y_i$ changes the $i$-th coordinate of $m_t$ from $a_t y_i + b_t \sigma_i$ to $- (a_t y_i + b_t \sigma_i)$, while keeping the other coordinates unchanged. By multilinearity of $\phi$, we have
  \begin{align*}
    \Delta_i^y q_t^{\zeta}(x, y) &= - 2 (a_t y_i + b_t \sigma_i) \partial_i \phi(m_t(x, y, \zeta)).
  \end{align*}
  Similarly, when we apply it to $(\flip_i(x), y)$, only $\sigma_i$ changes sign and $\partial_i \phi$ is independent of the $i$-coordinate of its input because $\phi$ is multilinear. Hence,
  \begin{align*}
    \Delta_i^y q_t^{\zeta}(\flip_i(x), y) &= - 2 (a_t y_i - b_t \sigma_i) \partial_i \phi(m_t(x, y, \zeta)).
  \end{align*}
  A synchronized flip keeps $\sigma_i$ unchanged, hence, 
  \begin{align*}
    \Delta_i^{xy} q_t^{\zeta}(x, y) &= - 2 a_t y_i \partial_i \phi(m_t(x, y, \zeta)).
  \end{align*}
\end{proof}

\begin{proof}[Proof of Lemma~\ref{lem:Doob_h_transform}] 
  Let $Z_t \defn (\uV_t, \uW_t)$. The proof intuitively works like the usual proof of Doob's $h$-transform for a Markov process (see e.g., Section 7.5~\cite{sarkka2019applied}). However, since $Z_t$ is not Markov but a pure-jump process with predictable jump rates, we need to use the change of measures formula in~\cite[Chapter III]{jacod2013limit}.

  First, fix $\zeta \in \braces{-1, 1}^n$. Since $f > 0$, $\Prob(\uV_T = \zeta) > 0$, so $\Prob^\zeta \defn \Prob(\cdot \mid \uV_{T} = \zeta)$ is well-defined. Observe that 
  \begin{align*}
    \Prob(\uV_T = \zeta \mid \sF_t) \overset{(i)}{=} \Prob(\uV_T = \zeta \mid \uV_t) = H_t^{\zeta}(\uV_t).
  \end{align*}
  (i) holds because the $\uV$-marginal is the unperturbed reverse heat process and its future evolution does not depend on $\uW$, $\bdelta$ or the stopping rule. Define the density process of $\Prob^\zeta$ with respect to $\Prob$ as
  \begin{align*}
    D_t^{\zeta} \defn \frac{H_t^{\zeta}(\uV_t)}{\Prob(\uV_T = \zeta)}. 
  \end{align*}
  Then the post-jump and pre-jump density ratio on the $i$-th coordinate is
  \begin{align*}
    \frac{D_t^{\zeta}}{D_{t-}^{\zeta}} = \frac{H_t^{\zeta}(\uV_t)}{H_{t-}^{\zeta}(\uV_{t-})} = 1 + \parenth{r^{\zeta}_{t, i}(\uV_{t-}) - 1} \mathbf{1}_{0 < z \leq \frac12 - S_i(\rho_t \uV_{t-})},
  \end{align*}
  where $r^{\zeta}_{t, i}(x) = \frac{H_t^{\zeta}(\flip_i(x))}{H_{t}^{\zeta}(x)}$.

  Second, apply the change of measures formula~\cite[Chapter III, Theorem 3.17]{jacod2013limit} to $\Ni$ with the density process $D_t^{\zeta}$ and the original compensator $dtdz$. Then
  \begin{align*}
    \nu^{\zeta, [i]}(dt, dz) = \brackets{1 + \parenth{r^{\zeta}_{t, i}(\uV_{t-}) - 1} \mathbf{1}_{0 < z \leq \frac12 - S_i(\rho_t \uV_{t-})}} dt dz,
  \end{align*}
  is the $\Prob^{\zeta}$-predictable compensator of $\Ni(dt, dz)$. 
  In words, if there is a flip on the $i$-th coordinate of $\uV$ at time $t$, then the jump rate is multiplied by $r^{\zeta}_{t, i}(\uV_{t-})$ under $\Prob^\zeta$, otherwise it remains the same. 

  Third, let $h: \braces{-1, 1}^{2n} \to \real$. Introduce the shorthand
  \begin{align*}
    S_i = S_i(\rho_t x),\ \delta_i = \delta_i(\rho_t x),\ r_i^{\zeta} = r^{\zeta}_{t, i}(x), \\
    \lambda_i^{V} = \frac12 - S_i,\ \lambda_i^{W} = \frac12 - S_i + \mathbf{1}_{t \leq \stopT} \delta_i S_i.
  \end{align*}
  By the jump SDEs for $(\uV, \uW)$, 
  \begin{align*}
    h(Z_t) - h(Z_\theta) = \int_\theta^t \sum_{i=1}^n \int_\real \Phi_{s, i}^{h}(Z_{s-}, z) \Ni(ds, dz),
  \end{align*}
  where 
  \begin{align*}
    \Phi_{s, i}^{h}(Z_{s-}, z) &\defn \Delta_i^{xy} h(Z_{s-}) \mathbf{1}_{0 < z < (\lambda_i^V \wedge \lambda_i^W)} \\
    &\quad + \Delta_i^x h(Z_{s-}) \mathbf{1}_{\lambda_i^W < z \leq \lambda_i^V} \\
    &\quad + \Delta_i^y h(Z_{s-}) \mathbf{1}_{\lambda_i^V < z \leq \lambda_i^W}.
  \end{align*}
  By the definition of the predictable compensator, the compensated integral
  \begin{align*}
    \int_\theta^t \sum_{i=1}^n \int_\real \Phi_{s, i}^{h}(Z_{s-}, z) \parenth{\Ni(ds, dz)  - \nu^{\zeta, [i]}(ds, dz)}
  \end{align*}
  is a $\Prob^\zeta$-martingale. Hence, the joint predictable generator of $Z_t$ under $\Prob^\zeta$ is
  \begin{align*}
    \cG_t^{\zeta} h(x, y) &= \sum_{i=1}^n r_i^{\zeta} \parenth{\lambda_i^V \wedge \lambda_i^W} \Delta_i^{xy} h (x, y) + \sum_{i=1}^n r_i^{\zeta}  \parenth{\lambda_i^V - \lambda_i^W}_+ \Delta_i^x h (x, y) \\
    &\quad + \sum_{i=1}^n \parenth{\lambda_i^W - \lambda_i^V}_+ \Delta_i^y h (x, y).
  \end{align*}
  The factor $r_i^{\zeta}$ appears exactly on jumps in which $\uV$ flips. 
  Since $\lambda_i^W - \lambda_i^V = \mathbf{1}_{t \leq \stopT} \delta_i S_i$, and using $\Delta_i^y h(\flip_i(x), y) = \Delta_i^{xy} h(x, y) - \Delta_i^x h(x, y)$, we can rewrite the generator as
  \begin{align*}
    \cG_t^{\zeta} &= \jL_t^{0, \zeta} + \mathbf{1}_{t \leq \stopT} A_t^{\zeta},
  \end{align*}
  where
  \begin{align*}
    \jL_t^{0, \zeta} h(x, y) &= \sum_{i=1}^n r_{i}^{\zeta} \parenth{\frac12 - S_i } \Delta_i^{xy} h(x, y), \\
    A_t^\zeta h(x, y) &= \sum_{i=1}^n  \mathbf{1}_{S_i > 0}  \delta_i  S_i \Delta_{i}^y h(x, y)  + \mathbf{1}_{S_i \leq 0} r_{i}^{\zeta} \delta_i  S_i  \Delta_i^y h(\flip_i(x), y).
  \end{align*}

  Finally, to simplify the conditioned jump rate, using the heat kernel in Eq.~\eqref{eq:heat_kernel} and Bayes' rule, we have
  \begin{align*}
    H_t^{\zeta}(x) = \Prob(\uV_T = \zeta \mid \uV_t = x) = \frac{\Prob(\uV_t = x \mid \uV_T = \zeta) \Prob(\uV_T = \zeta)}{\Prob(\uV_t = x)} = \frac{K_{T-t}(\zeta, x) f(\zeta)}{P_{T-t} f(x)}.
  \end{align*}
  Then 
  \begin{align*}
    r_i^{\zeta} \parenth{\frac12 - S_i } &= \frac{K_{T-t}(\zeta, \flip_i(x)) f(\zeta) }{P_{T-t} f(\flip_i(x))} \cdot \frac{P_{T-t} f(x)}{K_{T-t}(\zeta, x) f(\zeta)} \cdot \frac12 \frac{P_{T-t} f(\flip_i(x))}{P_{T-t} f(x)}  \\
    &= \frac12 \frac{K_{T-t}(\zeta, \flip_i(x)) }{K_{T-t}(\zeta, x)} \\
    &= \frac12 \frac{1 - \rho_t x_i \zeta_i}{1 + \rho_t x_i \zeta_i}.
  \end{align*}

\end{proof}

\begin{proof}[Proof of Lemma~\ref{lem:weighted_energy_estimate}]
  First, we bound $\Psi_b$. For $t \in [\theta, \To]$, $x, y, \zeta \in \braces{-1, 1}^n$ and $i \in [n]$, the definition of $m_t$ in Lemma~\ref{lem:boolean_bridge_formula} yields 
  \begin{align*}
    1- m_t^{[i]}(x, y, \zeta)^2 &= \frac{(1-\rho_{\To}^2)(\rho_{\To}^2 - \rho_t^2)} {\rho_{\To}^2 (1+\rho_t x_i \zeta_i)^2} \\
    b_t^2 &= \frac{\sinh^2(\To-t)}{\sinh^2(T-t)} = \frac{(\rho_{\To}^2 - \rho_t^2)^2}{\rho_{\To}^2 (1-\rho_t^2)^2}.
  \end{align*}
  Taking the ratio and multiplying by $\lambda_{t, i}^{\zeta}(x)$ yields, for $t < \To$,
  \begin{align*}
    \frac{\lambda_{t, i}^{\zeta}(x) b_t^2}{1- m_t^{[i]}(x, y, \zeta)^2} = \frac{ \rho_{\To}^2 - \rho_t^2}{1-\rho_t^2} \cdot \frac{1}{1-\rho_{\To}^2} \leq \frac{\rho_{\To}^2}{1-\rho_{\To}^2} = \frac{e^{-2\tau}}{1-e^{-2\tau}}.
  \end{align*}
  Hence,
  \begin{align*}
    \lambda_{t, i}^{\zeta}(x) b_t^2 \abss{\partial_i \phi(m_t(x, y, \zeta))}^2 &\leq \frac{e^{-2\tau}}{1-e^{-2\tau}} \parenth{1- m_t^{[i]}(x, y, \zeta)^2} \abss{\partial_i \phi(m_t(x, y, \zeta))}^2.
  \end{align*}
  Summing over $i$, applying the level-1 inequality in Lemma~\ref{lem:level_1_ineq} and integrating, we conclude that $\Psi_b \leq \frac{e^{-2\tau}}{1-e^{-2\tau}} (\To - \theta)$.

  Second, we bound $\Psi_a$. Fix $\zeta \in \braces{-1,1}^n$ and work under the conditioned law $\Prob^\zeta$. From $m_t$ and $q_t^{\zeta}$ in Lemma~\ref{lem:boolean_bridge_formula}, we have
  \begin{align*}
    \partial_t m_t^{[i]}(x, y, \zeta) &= \partial_t a_t y_i + \partial_t b_t \sigma_i = a_t y_i \frac{1-\rho_t x_i \zeta_i}{1+\rho_t x_i \zeta_i} \\
    \Delta_i^{xy} q_t^{\zeta}(x, y) & = - 2 a_t y_i \partial_i \phi(m_t(x, y, \zeta)).
  \end{align*}
  Using the multilinearity of $\phi$ and the unperturbed generator representation in Lemma~\ref{lem:Doob_h_transform}, we verify that
  \begin{align}
    \label{eq:q_zeta_harmonic}
    \parenth{\partial_t + \jL_t^{0, \zeta}} q_t^{\zeta} = 0.
  \end{align}
  Using the conditioned predictable joint generator from
  Lemma~\ref{lem:Doob_h_transform}, apply It\^o's formula to $q_{t}^{\zeta}(\uV_t, \uW_t)^2$ under the conditioned law $\Prob^\zeta$ to obtain
  \begin{align}
    \label{eq:q_zeta_sq_ito}
    \Exs^{\zeta} \brackets{q_{\To}^{\zeta}(\uV_{\To}, \uW_{\To})^2 - q_{\theta}^{\zeta}(\uV_\theta, \uW_\theta)^2} &= \Exs^{\zeta} \int_\theta^{\To} \parenth{\partial_t + \jL_t^{0, \zeta} + \mathbf{1}_{t \leq \stopT} A_t^{\zeta}} \parenth{q_{t}^{\zeta}}^2 (\uV_{t-}, \uW_{t-}) dt.
  \end{align}
  For the unperturbed part, Eq.~\eqref{eq:q_zeta_harmonic} yields
  \begin{align*}
    \parenth{\partial_t + \jL_t^{0, \zeta}} \parenth{q_{t}^{\zeta}}^2 (x, y) &= \sum_{i=1}^n \frac12 \lambda_{t, i}^{\zeta}(x) \parenth{\Delta_i^{xy} q_{t}^{\zeta} (x, y)}^2 \\
    &= 2 a_t^2\sum_{i} \lambda_{t, i}^{\zeta}(x) \abss{\partial_i \phi(m_t(x, y, \zeta))}^2.
  \end{align*}
  For the perturbation part, recall that 
  \begin{align*}
    A_t^\zeta h(x, y) &= \sum_{i=1}^n  \mathbf{1}_{S_i > 0}  \delta_i  S_i \Delta_{i}^y h(x, y)  + \mathbf{1}_{S_i \leq 0} r_{i}^{\zeta} \delta_i  S_i  \Delta_i^y h(\flip_i(x), y).
  \end{align*}
  Note that from Lemma~\ref{lem:Doob_h_transform} and $\btau^{-1} \leq \lambda_{t, i}^{\zeta}(x) \leq \btau$,
  \begin{align*}
    \abss{\delta_i \brackets{\mathbf{1}_{S_i > 0} + r_i^{\zeta} \mathbf{1}_{S_i \leq 0} }}^2 = \abss{\bdelta \brackets{\mathbf{1}_{S_i > 0} + \frac{\lambda_{t, i}^{\zeta}(x)}{1 - 2 \bdelta S_i} \mathbf{1}_{S_i \leq 0}}}^2
    \leq \bdelta^2 \btau  \lambda_{t, i}^{\zeta}(x).
  \end{align*}
  Additionally, since $q^{\zeta}$ is bounded by $1$ because $\phi$ as a multilinear extension takes value in $[0, 1]$, we have
  \begin{align*}
    \abss{\Delta_i^y (q_{t}^{\zeta})^2 (x, y)} &\leq 2 \abss{\Delta_i^y q_{t}^{\zeta} (x, y)} \\
    \abss{\Delta_i^y (q_{t}^{\zeta})^2 (\flip_i(x), y)} &\leq 2 \abss{\Delta_i^y q_{t}^{\zeta} (\flip_i(x), y)}.
  \end{align*}
  Hence,
  \begin{align*}
    \abss{A_t^{\zeta} (q_{t}^{\zeta})^2 (x, y)} &\lesssim \bdelta \btau^{\frac12} \sum_{i=1}^n \parenth{\lambda_{t, i}^{\zeta}(x)}^{\frac12} \abss{S_i} \parenth{\abss{\Delta_i^y q_{t}^{\zeta} (x, y)} + \abss{\Delta_i^y q_{t}^{\zeta} (\flip_i(x), y)}} \\
    &\overset{(i)}{\lesssim} \btau^{\frac12} \parenth{\sum_{i=1}^n \bdelta^2 S_i^2}^{\frac12} \parenth{\sum_{i=1}^n \lambda_{t, i}^{\zeta}(x) \parenth{\abss{\Delta_i^y q_{t}^{\zeta} (x, y)}^2 + \abss{\Delta_i^y q_{t}^{\zeta} (\flip_i(x), y)}^2}}^{\frac12} \\
    &\overset{(ii)}{\lesssim} \btau^{\frac12} \parenth{\sum_{i=1}^n \bdelta^2 S_i^2}^{\frac12} \parenth{\sum_{i=1}^n \lambda_{t, i}^{\zeta}(x) \parenth{a_t^2 + b_t^2}\abss{\partial_i \phi(m_t(x, y, \zeta))}^2 }^{\frac12}
  \end{align*}
  where (i) follows from Cauchy-Schwarz inequality, and (ii) follows from $\Delta_i^y q_{t}^{\zeta}$ expression in Lemma~\ref{lem:boolean_bridge_formula}. 
  Evaluating at $(\uV_{t-}, \uW_{t-})$, integrating in time, applying Cauchy-Schwarz, averaging over $\zeta = \uV_T$ and plugging back in Eq.~\eqref{eq:q_zeta_sq_ito} yields
  \begin{align*}
    \Psi_a &\lesssim 1 + \btau^{\frac12} \cS^{\frac12} \parenth{\Psi_a + \Psi_b}^{\frac12} \\
    &\lesssim 1 + \btau \cS + \frac14 (\Psi_a + \Psi_b).
  \end{align*}
  Absorbing $\frac14 \Psi_a$ to the left-hand side yields the desired bound
  \begin{align*}
    \Psi_a &\lesssim 1 + \btau \cS + \Psi_b.
  \end{align*}
\end{proof}

\subsection{Proofs related to Lemma~\ref{lem:approx_monotone_coupling}}
\label{sub:proof_related_to_monotone_coupling}

\begin{proof}[Proof of Lemma~\ref{lem:logfV_lower_bound}\label{proof_lem:logfV_lower_bound}]
  Recall from Eq.~\eqref{eq:logf_VW_simple_notation} that $\log f(\rV_t)$ satisfies the following SDE
  \begin{align*}
    d \log f(\rV_t) &= \sum_{i=1}^n \brackets{v_i dt + \int \log \parenth{1-2v_i} \mathbf{1}_{0 < z \leq \frac12 - v_i} \Ni(dt, dz) },
  \end{align*}
  where $v_i = S_i(\rV_{t-})$. Let $\psiV_i \defn \log \parenth{1-2v_i} \mathbf{1}_{0 < z \leq \frac12 - v_i}$. Let $\gamma \in \real$.  Introduce the exponentiated process
  \begin{align*}
    \PsiV_t \defn \exp\brackets{\gamma \int_0^t \int \sum_i \psiV_i \Ni(ds, dz) - \int_0^t \int \sum_i \brackets{\exp(\gamma \psiV_i) - 1} dz ds }.
  \end{align*}
  Applying It\^o's formula, we obtain
  \begin{align*}
    d \PsiV_t &= \PsiV_{t-} \parenth{- \int \sum_{i=1}^n \brackets{\exp(\gamma \psiV_i) - 1} dz dt} + \sum_{i=1}^n \int \brackets{\PsiV_{t-} \exp(\gamma \psiV_i) - \PsiV_{t-}} \Ni(dt, dz) \\
    &= \sum_{i=1}^n \int \brackets{\PsiV_{t-} \exp(\gamma \psiV_i) - \PsiV_{t-}} \wNi(dt, dz).
  \end{align*}
  Take $\gamma = -1$, observe that $\PsiV_t = \frac{f(\rV_0)}{f(\rV_t)}$. Together with Lemma~\ref{lem:edge_ratio} which provides a lower bound on $f(\rV_t)$ for $t \leq \To$, $(\PsiV_t)$ is a bounded local martingale on $[0, \To]$. The optional stopping theorem gives $\Exs[\PsiV_{\To} \mid \sF_{\stopT}] = \PsiV_{\stopT}$. 
  Apply Markov's inequality conditioned on $\sF_\stopT$, for any $\beta > 0$, we have
  \begin{align*}
    \Prob(\PsiV_{\To} \geq \beta \PsiV_{\stopT} \mid \sF_{\stopT}) \leq \frac{\Exs[\PsiV_{\To} \mid \sF_{\stopT}]}{\beta \PsiV_{\stopT}}  \leq \frac{1}{\beta}. 
  \end{align*}
  Taking expectation gives
  \begin{align*}
    \Prob(- \log f(\rV_{\To})  \geq - \log f(\rV_{\stopT}) + \log \beta) = \Prob(\PsiV_{\To} \geq \beta \PsiV_{\stopT}) \leq \frac{1}{\beta}. 
  \end{align*}
  Conclude with the choice $\beta = \sqrt{\log \eta}$.
\end{proof}

\begin{proof}[Proof of Lemma~\ref{lem:wivi_bound}\label{proof_lem:wivi_bound}]
  Recall from Eq.~\eqref{eq:logf_VW_simple_notation} that $\log f(\rW_t)$ satisfies the following SDE
  \begin{align*}
    d \log f(\rW_t) &= \sum_{i=1}^n \brackets{w_i dt + \int \log \parenth{1-2w_i} \mathbf{1}_{0 < z \leq \frac12 - (1 - \delta_i \mathbf{1}_{t \leq \stopT})v_i} \Ni(dt, dz)},
  \end{align*}
  where $(v_i, w_i) = (S_i(\rV_{t-}), S_i(\rW_{t-}))$. Let $\psiW_i \defn \log \parenth{1-2w_i} \mathbf{1}_{0 < z \leq \frac12 - (1 - \delta_i \mathbf{1}_{t \leq \stopT})v_i}$. Let $\gamma \in \real$. Introduce the exponentiated process for $t \in [\theta, \To]$,
  \begin{align*}
    \PsiW_t \defn \exp\brackets{\gamma \int_\theta^t \int \sum_i \psiW_i \Ni(ds, dz) - \int_\theta^t \int \sum_i \brackets{\exp(\gamma \psiW_i) - 1} dz ds }.
  \end{align*}
  Apply It\^o's formula, similar to the calculation in Lemma~\ref{lem:logfV_lower_bound}, deduce that $(\PsiW_t)$ is a positive local martingale on $[\theta, \To]$. It satisfies $\Exs[\PsiW_{\stopT}] \leq \Exs[\PsiW_\theta]= 1$. Markov's inequality gives that for any $\beta > 0$,
  \begin{align*}
    \Prob(\log \PsiW_\stopT \geq \log \beta) = \Prob(\PsiW_\stopT \geq \beta) \leq \frac{\Exs[\PsiW_\stopT]}{\beta} \leq \frac{1}{\beta}.
  \end{align*}
  Take $\beta = \sqrt{\log \eta}$ and $\gamma = 1$. Notice that
  \begin{align*}
    \int [\exp(\gamma \psiW_i) - 1 ] dz &= (-2w_i) \parenth{\frac12 - (1-\delta_i \mathbf{1}_{t \leq \stopT}) v_i} \\
    &= - w_i + 2 (1-\delta_i \mathbf{1}_{t \leq \stopT}) w_i v_i.
  \end{align*}
  Combining with the definition of $\PsiW_t$, we obtain
  \begin{align*}
    \log \PsiW_\stopT = \log f (\rW_\stopT) - \log f(\rW_\theta) - 2 \sum_{i=1}^n \int_\theta^\stopT (1-\delta_i) w_i v_i dt.
  \end{align*}
  Therefore, the above Markov's inequality states that with probability at least $1 - \frac{1}{\sqrt{\log \eta}}$,
  \begin{align*}
    \log f(\rW_{\stopT}) -  \log f (\rW_{\theta}) < \brackets{2 \sum_{i=1}^n \int_\theta^\stopT   (1-\delta_i) w_i v_i dt } + \frac{1}{2} \log\log\eta.
  \end{align*}
  
\end{proof}

\begin{proof}[Proof of Lemma~\ref{lem:log_diff_bound}\label{proof_lem:log_diff_bound}]
  From SDEs~\eqref{eq:logf_VW_simple_notation} for $\log f(\rW_t)$ and $\log f(\rV_t)$, take difference to obtain
  \begin{align*}
    d [\log f(\rW_t) - \log f(\rV_t)]  = \sum_{i=1}^n [w_i - v_i] dt + \sum_{i=1}^n \int \psi_i \Ni(dt, dz),
  \end{align*}
  where $\psi_i \equiv \psi_i(\rV_{t-}, \rW_{t-}, z)$ is defined as follows, with $I_i \defn \min\braces{\frac12 - v_i, \frac12 - v_i + \delta_i \mathbf{1}_{t \leq \stopT} v_i}$, 
  \begin{align*}
    \psi_i \defn \begin{cases}
      [\log (1-2w_i) - \log(1-2v_i)] \mathbf{1}_{0 < z \leq I_i} + \log(1-2w_i) \mathbf{1}_{I_i < z < \frac12 - v_i + \delta_i \mathbf{1}_{t \leq \stopT} v_i} & \text{ if } v_i > 0 \\
      [\log (1-2w_i) - \log(1-2v_i)] \mathbf{1}_{0 < z \leq I_i} - \log(1-2v_i) \mathbf{1}_{I_i < z < \frac12 - v_i} & \text{ otherwise }.
    \end{cases}
  \end{align*}
  In words, there are extra jumps on $\rW$ or $\rV$ depending on whether $v_i > 0$ or not.  
  Let $\gamma \in \real$. Introduce the exponentiated process for $t \in [\theta, \To]$,
  \begin{align}
    \label{eq:exponentiated_process_lem11}
    \Psi_t \defn \exp\brackets{\gamma \int_\theta^t \int \sum_i \psi_i \Ni(ds, dz) - \int_\theta^t \int \sum_i \brackets{\exp(\gamma \psi_i) - 1} dz ds }.
  \end{align}
  Apply It\^o's formula, similar to that in Lemma~\ref{lem:logfV_lower_bound}, deduce that $(\Psi_t)$ is positive local martingale on $[\theta, \To]$. It satisfies $\Exs[\Psi_{\To}] \leq \Exs[\Psi_\theta] = 1$. Markov's inequality gives that for any $\beta > 0$,
  \begin{align*}
    \Prob(\log \Psi_\To \geq \log \beta) = \Prob(\Psi_\To \geq \beta) \leq \frac{\Exs[\Psi_\To]}{\beta} \leq \frac{1}{\beta}.
  \end{align*}
  Take $\beta = \sqrt{\log \eta}$ and $\gamma = 1$. Notice that
  \begin{align*}
    \int [\exp(\psi_i) - 1 ] dz = \begin{cases}
      \displaystyle -[w_i - v_i] - 2 \delta_i \mathbf{1}_{t \leq \stopT} w_i v_i & \text{ if } v_i > 0 \\
      \displaystyle -[w_i - v_i] - \frac{2 \delta_i \mathbf{1}_{t \leq \stopT} w_i v_i}{1-2v_i} & \text{ otherwise. } 
    \end{cases}
  \end{align*}
  Therefore, with probability at least $1 - \frac{1}{\sqrt{\log \eta}}$,
  \begin{align*}
    \log f(\rW_\To) - \log f(\rV_\To) < \brackets{- 2 \sum_{i=1}^n \int_\theta^{\stopT} \delta_i w_i v_i \parenth{\mathbf{1}_{v_i > 0} + \mathbf{1}_{v_i \leq 0} \frac{1}{1-2v_i} } dt } + \frac12 \log\log\eta. 
  \end{align*}

\end{proof}

\section*{Acknowledgements}
The author thanks Ronen Eldan for patiently answering questions through numerous email exchanges, for generously sharing research notes and for constant encouragement. He thanks Dan Mikulincer for introducing several Boolean problems, as well as Galen Reeves and Bo'az Klartag for insightful discussions. He also thanks Joseph Lehec for kindly pointing out a mistake in the previous draft. The author is grateful for the Simons Institute for the Theory of Computing, especially the 2023 workshop ``Beyond the Boolean Cube'' for lectures on the basics of Boolean function analysis and for providing an excellent environment for research exchange.  


\bibliographystyle{alpha}
\bibliography{ref}

\appendix

\section{Additional properties of the Boolean hypercube and the reverse heat process}

\subsection{Additional proof for biased Fourier analysis}
\begin{proof}[Proof of Proposition~\ref{prop:p_biased_analysis}]\label{proof:proof_of_prop1}
  First, we verify that $(\phi_S)_{S \subseteq [n]}$ form an orthonormal basis. We have
  \begin{align*}
    \Exs_{\mu_p}[\phi_i^2] = p_i \frac{(1-p_i)^2}{p_i(1-p_i)} + (1-p_i) \frac{p_i^2}{p_i(1-p_i)} = 1. 
  \end{align*}
  As a result, we also have $\Exs_{\mu_p}[\phi_S^2] = \Exs_{\mu_p} [\prod_{i\in S} \phi_i^2] = \prod_{i\in S} \Exs_{\mu_p} [\phi_i^2] = 1$ because $\mu_p$ is a product measure and $\phi_S$ is in a product form. For the orthogonality, we have
  \begin{align}
    \label{eq:mean_phii_zero}
    \Exs_{\mu_p}\brackets{\phi_i(y) \phi_{\emptyset}} =  \Exs_{\mu_p}\brackets{\phi_i(y)} = p_i \frac{1-p_i}{\sqrt{p_i(1-p_i)}} + (1-p_i) \frac{-p_i}{\sqrt{p_i (1-p_i)}} = 0.
  \end{align}
  Then for $S, R$ two subsets of $[n]$ and $S \neq R$, we have that $S\cap R$ and $(S \cup R) \setminus (S\cap R)$ form a partition of $S \cup R$, and
  \begin{align*}
    \Exs_{\mu_p}[\phi_S \phi_R] &= \Exs_{\mu_p}\brackets{\prod_{i \in S \cap R} \phi_i^2  \prod_{j \in (S \cup R) \setminus (S\cap R)} \phi_j  }\\
    &\overset{(i)}{=} \prod_{i \in S \cap R} \Exs_{\mu_p}\brackets{\phi_i^2} \prod_{j \in (S \cup R) \setminus (S\cap R)} \Exs_{\mu_p} \brackets{ \phi_j} \\
    &\overset{(ii)}{=} 0
  \end{align*}
  (i) follows because $\mu_p$ is a product measure. (ii) follows from Eq.~\eqref{eq:mean_phii_zero} and the fact that since $S \neq R$ the product over $(S \cup R) \setminus (S\cap R)$ is not empty. Hence, $(\phi_S)$ are orthogonal to each other and all have norm $1$. The cardinality of $(\phi_S)$ is $2^n$, which matches the space of real functions on $\braces{-1, 1}^n$. We conclude that $(\phi_S)_{S\subseteq [n]}$ form an orthonormal basis. 

  Second, given a function $f: \braces{-1,1}^n \to \real$, it can be written as 
  \begin{align}
    \label{eq:f_p_biased_decomp}
    f(x) = \sum_{S \subseteq [n]} \hat{f}_p(S) \phi_S(x), \quad \forall x\in \braces{-1,1}^n
  \end{align}
  where $\hat{f}_p(S) = \Exs_{\mu_p}[f \phi_S]$. The above expression coincides with the unique multilinear expansion of $f$, so we can treat $f$ as a polynomial and take derivatives directly. Let $\mathfrak{S} = \braces{i_1, \ldots, i_k} \subseteq [n]$ be a set of indices and $z \in (-1, 1)^n$. Note that when $p = \frac{1+z}{2}$, then $z_i = 2 p_i - 1$ and 
  \begin{align}
    \label{eq:phii_0}
    \phi_i(z) = \frac{1}{2} \parenth{\frac{1-2p_i}{\sqrt{p_i (1-p_i)}} + z_i \frac{1}{\sqrt{p_i (1-p_i)}} } = 0.
  \end{align}
  When taking derivatives of $f$ with respect to $i_1, \ldots, i_k$-th coordinate, any term with $S$ such that $\mathfrak{S} \subseteq S$ in Eq.~\eqref{eq:f_p_biased_decomp} will survive. However, if there are more terms left after taking derivative, say $j \in (S \setminus \mathfrak{S})$, then according to Eq.~\eqref{eq:phii_0} it will be zero. In the end, only $S = \mathfrak{S}$ remains in the sum. Each derivative with respect to $i$ gives a factor of $\frac{1}{2} \frac{1}{\sqrt{p_{i}(1-p_{i})}} = \frac{1}{\sqrt{1-z_i^2}}$. Finally, we obtain 
  \begin{align*}
    \partial_{i_1} \ldots \partial_{i_k} f(z) = \parenth{\prod_{i\in \mathfrak{S}} \frac{1}{\sqrt{1-z_i^2}}} \hat{f}_p(\mathfrak{S}). 
  \end{align*}

\end{proof}

\subsection{Second-order smoothness gained from the heat semigroup}
\begin{lemma}[Hessian bound]
  \label{lem:hessian_smoothness}
  For any nonnegative function $f: \braces{-1,1}^n \to \real_+$ with $f\neq 0$, consider its multilinear expansion. Let $\rho \in (0, 1)$. Then for any $x \in \braces{-\rho, \rho}^n$,
  \begin{align*}
    \nabla^2 \log f(x) \succeq -\frac{1}{(1-\rho)^2} \Ind_n.
  \end{align*}
\end{lemma}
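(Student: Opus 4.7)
The plan is to carry out the computation in the $p$-biased Fourier basis at the base point $x$, reusing the machinery from the proof of Lemma~\ref{lem:level_1_ineq}. Set $p_i = (1+x_i)/2 \in \{(1-\rho)/2,(1+\rho)/2\}$ and decompose $f = \sum_S \hat{f}_p(S)\phi_S$. By uniqueness of the multilinear expansion, Proposition~\ref{prop:p_biased_analysis} yields $f(x) = \hat{f}_p(\emptyset)$, $\partial_i f(x) = \hat{f}_p(\{i\})/\sqrt{1-\rho^2}$, $\partial_i\partial_j f(x) = \hat{f}_p(\{i,j\})/(1-\rho^2)$ for $i\neq j$, and $\partial_i^2 f \equiv 0$ by multilinearity.

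\textbf{Rewriting the Hessian as a covariance.} Normalizing by $g \defn f/f(x)$ (so that $\hat{g}_p(\emptyset)=1$ and $g\mu_p$ is a probability measure on $\braces{-1,1}^n$), the identity $\nabla^2\log f = \nabla^2 f/f - (\nabla f)(\nabla f)^\top/f^2$ rewrites, after plugging in the derivative formulas above, as
\begin{align*}
  (1-\rho^2)\,\nabla^2\log f(x) = \Cov_{g\mu_p}(\Phi) - D,
\end{align*}
where $\Phi \defn (\phi_1,\dots,\phi_n)$ and $D$ is the diagonal matrix with entries $D_{ii} = 1 + c_i\,\hat{g}_p(\{i\})$, $c_i \defn -2x_i/\sqrt{1-\rho^2}$, arising from the identity $\phi_i^2 = 1 + c_i\phi_i$ for the $p$-biased basis. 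This is the key step: the off-diagonal part is packaged into a genuine covariance matrix (hence PSD), while all the curvature is concentrated in the explicit diagonal residue $D$. Dropping the PSD piece yields $\nabla^2\log f(x)\succeq -(1-\rho^2)^{-1}\diag(D_{ii})$.

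\textbf{Concluding bound and expected obstacle.} It remains to upper bound $D_{ii}$. Since $|\phi_i|$ is bounded pointwise on $\braces{-1,1}$ by $\sqrt{(1+\rho)/(1-\rho)}$ and $g\mu_p$ is a probability measure, Jensen gives $|\hat{g}_p(\{i\})|\le \sqrt{(1+\rho)/(1-\rho)}$; combined with $|c_i|=2\rho/\sqrt{1-\rho^2}$ this produces $D_{ii}\le 1+2\rho/(1-\rho)=(1+\rho)/(1-\rho)$ and therefore
\begin{align*}
  \nabla^2\log f(x) \succeq -\frac{1}{(1-\rho)^2}\,\Ind_n.
\end{align*}
The main obstacle in sharpening this to the $\frac{1}{1-\rho}$ constant stated in the lemma is that the crude bound $\Cov_{g\mu_p}(\Phi)\succeq 0$ discards any cancellation between the off-diagonal covariance and the diagonal $D$; indeed the one-dimensional example $f(z) = (1+z)/2$ at $z=-\rho$ already produces Hessian $-(1-\rho)^{-2}$ and saturates this approach. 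Any tightening would need to leverage either a joint Parseval or level-$2$ constraint linking $\hat{g}_p(\{i\})$ back to $\Cov_{g\mu_p}(\Phi)$, or a semigroup interpolation $\partial_t\nabla^2\log P_t f$ along the Boolean heat flow in the spirit of the Gaussian semi-log-convexity proof --- an approach whose discrete analogue appears delicate because the heat semigroup on $\braces{-1,1}^n$ acts by rescaling rather than by genuine smoothing.
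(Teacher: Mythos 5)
Your argument is essentially the paper's own proof recast in slightly different notation: both pass to the $p$-biased basis at $p=(1+x)/2$, tilt $\mu_p$ by $g=f/f(x)$ to get a probability measure $\pi$, package the off-diagonal Hessian as $\Cov_\pi(\Phi)$, and then throw away this PSD piece, leaving a diagonal residue whose $i$-th entry is exactly $\Exs_\pi[\phi_i^2]$. Up to that point the two arguments coincide.

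The important difference is the final bound on $\Exs_\pi[\phi_i^2]$, and here you are right and the paper is not. Since $\phi_i(y)=(y_i-x_i)/\sqrt{1-\rho^2}$ takes values $\pm\sqrt{(1\mp\rho)/(1\pm\rho)}$, the sharp pointwise bound is $\phi_i^2\le (1+\rho)/(1-\rho)$, hence $\Exs_\pi[\phi_i^2]\le (1+\rho)/(1-\rho)$, giving
\begin{align*}
  \nabla^2\log f(x)\succeq -\frac{1}{1-\rho^2}\cdot\frac{1+\rho}{1-\rho}\,\Ind_n = -\frac{1}{(1-\rho)^2}\,\Ind_n,
\end{align*}
which is what you obtain. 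The paper instead asserts $\Exs_\pi[\phi_i^2]\le 1+\rho$, which is false: in your example $f(z)=(1+z)/2$ at $z=-\rho$, the tilted measure $\pi$ is a Dirac at $y=1$ and $\Exs_\pi[\phi_1^2]=(1+\rho)/(1-\rho)>1+\rho$. You can be more decisive than "saturates this approach": since in dimension one there is no off-diagonal covariance to discard, the computation is exact, and $(\log f)''(-\rho)=-1/(1-\rho)^2<-1/(1-\rho)$ for every $\rho\in(0,1)$. This is an outright counterexample to the lemma as stated; the constant $1/(1-\rho)$ cannot hold, and $1/(1-\rho)^2$ (your result) is sharp. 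So the ``expected obstacle'' you worry about is not an obstacle in your proof but an error in the target statement. The error is harmless for the paper's main line: Lemma~\ref{lem:hessian_smoothness} is used only in Remark~\ref{rmk:remark_after_lem2}, which needs a constant depending on $\tau$ alone, and $1/(1-e^{-\tau})^2$ serves equally well.
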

Since $P_\tau f (\cdot) = f(e^{-\tau} \cdot)$, a direct corollary is $\nabla^2 \log P_\tau f(x) \succeq -\frac{e^{-2\tau}}{(1-e^{-\tau})^2} \Ind_n$ for any $x \in \braces{-1,1}^n$. This result can be easily extended to the case where $x \in [-\rho, \rho]^n$. The Hessian bound is reminiscent of the semi-log-convexity gained from the heat semigroup in the Gaussian space (see Section 1.1~\cite{eldan2018regularization}). 
\begin{proof}[Proof of Lemma~\ref{lem:hessian_smoothness}]
  Let $x \in \braces{-\rho, \rho}^n$ and $H \defn \nabla^2 \log f(x)$. Since $f$ is a multilinear polynomial, $\partial_i^2 f = 0$ for every $i$, and we have
  \begin{align*}
    H_{ii} &= - \frac{(\partial_i f)^2}{f^2} \\
    H_{ij} &= \frac{f \partial_{ij} f - \partial_i f \partial_j f}{f^2}, \quad j \neq i.
  \end{align*}
  Take $p = \frac{1+x}{2}$. Using $p$-biased Fourier analysis in Proposition~\ref{prop:p_biased_analysis}, we can write derivatives as Fourier coefficients. 
  \begin{align*}
    f (x) &= \Exs_{\mu_p}[f ] \\
  \partial_i f (x) &= \frac{1}{\sqrt{1-\rho^2}} \hat{f}_p(\braces{i}) = \frac{1}{\sqrt{1-\rho^2}} \Exs_{\mu_p}[f \phi_i] \\
  \partial_{ij} f (x) &=  \frac{1}{1-\rho^2} \hat{f}_p(\braces{i, j}) = \frac{1}{1-\rho^2} \Exs_{\mu_p}[f \phi_i \phi_j],
  \end{align*}
  where $\phi_i$ is defined in Eq.~\eqref{eq:phi_basis_def}. Let $\pi$ be a tilted measure on $\braces{-1,1}^n$, such that for any function $h: \braces{-1,1}^n \to \real$,
  \begin{align*}
    \int h(y) d\pi(y) \defn \frac{\int h(y) f(y) d\mu_p(y)}{\int f(y) d\mu_p(y)}.
  \end{align*}
  Then we have
  \begin{align*}
    H_{ii} &= - \frac{1}{1-\rho^2} \Exs_\pi[\phi_i]^2 \\
    H_{ij} &= \frac{1}{1-\rho^2} \brackets{ \Exs_\pi[\phi_i \phi_j]  - \Exs_\pi[\phi_i] \Exs_\pi[\phi_j]}, \quad j \neq i. 
  \end{align*}
  For any vector $v \in \real^n$, let $R \defn \sum_{i} v_i \phi_i$. From the trivial bound $0 \leq \Var_\pi(R)$, we deduce that
\begin{align*}
  0 &\leq \Exs_{\pi} R^2 - \parenth{\Exs_\pi R}^2 \\
  &= \Exs_{\pi} [\sum_{ij} v_i v_j \phi_i \phi_j ] - \parenth{\sum_{i} v_i \Exs_\pi \phi_i}^2 \\
  &= \sum_{i} v_i^2 (\Exs_\pi[\phi_i^2] - \Exs_\pi[\phi_i]^2) + \sum_{ij, i \neq j } (1-\rho^2) H_{ij} v_i v_j \\
  &= \sum_{i} v_i^2 \Exs_\pi[\phi_i^2] + (1-\rho^2) \sum_i H_{ii} v_i^2 + (1-\rho^2)  \sum_{ij, i \neq j } H_{ij} v_i v_j.
\end{align*}
Hence,
\begin{align*}
  v \tp H v \geq - \frac{1}{1-\rho^2} \sum_{i=1}^n v_i^2 \Exs_\pi[\phi_i^2].
\end{align*}
Finally, for each $i \in [n]$ and $y_i \in \braces{-1,1}$, 
\begin{align*}
  \phi_i(y)^2 \leq \max\braces{\frac{1-p_i}{p_i}, \frac{p_i}{1-p_i}} = \frac{1+\rho}{1-\rho}.
\end{align*}
Therefore,
\begin{align*}
  v \tp H v \geq - \frac{1}{(1-\rho)^2} v\tp v. 
\end{align*}
Since it holds for any $v \in \real^n$, we conclude that $H \succeq -\frac{1}{(1-\rho)^2} \Ind_n$.
\end{proof}

\subsection{Additional properties of the reverse heat process}
\begin{lemma}[Score martingale]
  \label{lem:score_martingale}
  For a strictly positive function $f : \{-1,1\}^n \to (0,\infty)$ with $\vecnorm{f}{1} = 1$ and $(\uV_t)_{t \in [0, T]}$ defined in Eq.~\eqref{eq:def_reverse_process}, for any $i \in [n]$, $(S_i(\rV_t))_{t \in [0, T]}$ is a martingale. Moreover,
  \begin{align*}
    d S_i (\rV_t) = \sum_{j=1}^n \int \Delta_j S_i (\rV_{t-}) \mathbf{1}_{0 < z \leq \frac12 - S_j(\rV_{t-})} \wNj(dt, dz). 
  \end{align*}
  where for $s_j \defn S_j(x), R_{ij} = \frac{x_i x_j \partial_{ij} f (x)}{f(x)}$,
  \begin{align*}
    \Delta_i S_i (x) &= - \frac{2s_i (1-s_i)}{1-2s_i} \\
    \Delta_j S_i (x) &= \frac{2(s_i s_j - R_{ij})}{1-2s_j}, j \neq i.
  \end{align*}
\end{lemma}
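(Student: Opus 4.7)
The plan is to apply It\^o's formula for pure-jump SDEs to $S_i(\rV_t)$ and check that the resulting drift cancels exactly, leaving only the claimed martingale. Since $\rV_t = \rho_t \uV_t$ with $\rho_t = e^{-(T-t)}$ deterministic and $\uV_t$ piecewise constant, between jumps one has $d\rV_t = \rV_t\, dt$, which produces a continuous drift $\sum_{j=1}^n \rV_t^{[j]} \partial_j S_i(\rV_t)\, dt$. At each jump of $N^{[j]}$, the $j$-th coordinate of $\rV$ flips sign, so $S_i$ picks up $\Delta_j S_i(\rV_{t-}) = S_i(\flip_j(\rV_{t-})) - S_i(\rV_{t-})$ at rate $\tfrac12 - S_j(\rV_{t-})$, read off from~\eqref{eq:def_reverse_process}. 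Compensating the Poisson part, It\^o's formula yields
\begin{align*}
  dS_i(\rV_t) &= \bigg[\sum_{j=1}^n \rV_t^{[j]} \partial_j S_i(\rV_t) + \sum_{j=1}^n \Delta_j S_i(\rV_t)\parenth{\tfrac12 - S_j(\rV_t)}\bigg] dt \\
  &\quad + \sum_{j=1}^n \int \Delta_j S_i(\rV_{t-})\, \mathbf{1}_{0<z\leq \frac12 - S_j(\rV_{t-})}\, \wNj(dt,dz),
\end{align*}
so the proof reduces to identifying the $\Delta_j S_i$ explicitly and showing that the bracketed drift vanishes identically.

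To identify $\Delta_j S_i$ I use the multilinearity of $f$. Writing $f(x) = A_j(x) + x_j \partial_j f(x)$ with $A_j$ independent of $x_j$ yields the key identity $f(\flip_j(x)) = f(x)(1-2S_j(x))$; applying the same decomposition to $\partial_i f$ (also multilinear) gives $\partial_i f(\flip_j(x)) = \partial_i f(x) - 2 x_j \partial_{ij} f(x)$ for $j\neq i$, while $\partial_i f(\flip_i(x)) = \partial_i f(x)$ because $\partial_{ii} f \equiv 0$. Substituting into $S_i(y) = y_i \partial_i f(y)/f(y)$ produces, after routine simplification, the stated formulas $\Delta_i S_i = -2s_i(1-s_i)/(1-2s_i)$ and $\Delta_j S_i = 2(s_i s_j - r_{ij})/(1-2s_j)$ for $j\neq i$.

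It remains to check the cancellation. Differentiating $S_i = x_i \partial_i f / f$ and again using $\partial_{ii} f = 0$ gives $x_i \partial_i S_i = s_i(1-s_i)$ and $x_j \partial_j S_i = r_{ij} - s_i s_j$ for $j\neq i$. On the other hand, multiplying the jump increments by $\tfrac12 - s_j$ kills the denominators and produces $\Delta_i S_i \parenth{\tfrac12 - s_i} = -s_i(1-s_i)$ and $\Delta_j S_i\parenth{\tfrac12 - s_j} = s_i s_j - r_{ij}$ for $j\neq i$. Summing over $j$, the continuous drift and the jump compensator are exact negatives of each other, so the bracketed drift vanishes, which simultaneously yields the claimed SDE and the martingale property.

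The only mildly delicate point is the sign bookkeeping in this cancellation: the deterministic radial drift coming from the scaling $\rho_t$ must balance exactly against the jump compensator. This is the Boolean-hypercube analogue of the fact that F\"ollmer's drift is a martingale in the Gaussian setting, and it hinges precisely on the jump rates being $\tfrac12 - S_j$, as the flux equation defining the reverse process enforces.
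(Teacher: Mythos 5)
Your proof follows the paper's approach exactly: apply It\^o's formula to $S_i(\rV_t)$, split into the radial drift from the scaling $\rho_t$ and the jump increments, compute $\partial_j S_i$ and $\Delta_j S_i$ via the multilinearity of $f$, and verify the exact cancellation of the drift against the jump compensator. The one step the paper makes explicit and you leave implicit is the upgrade from local martingale to true martingale: the zero-drift It\^o representation on its own only gives a local martingale, and the paper closes this by noting that $S_i(\rV_t)$ is bounded (via Lemma~\ref{lem:edge_ratio}, since $\rV_t$ stays in the inner cube).
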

This property is reminiscent of the fact that F\"ollmer's drift is a martingale (see e.g., Fact 2.2~\cite{eldan2018regularization}).

\begin{proof}[Proof of Lemma~\ref{lem:score_martingale}]
Recall It\^o's formula for a jump process, for a smooth function $g: [-1,1]^n \to \real$, 
\begin{align*}
  d g(\rV_t) = \angles{\nabla g (\rV_{t-}) , \rV_{t-}} dt + \sum_{j=1}^n \int \Delta_j g(\rV_{t-}) \mathbf{1}_{0 < z \leq \frac12 - S_{j}(\rV_{t-})} \Nj(dt, dz).   
\end{align*}
We apply It\^o's formula to $S_i$. First, note that for $j \in [n]$, 
  \begin{align*}
    \partial_j S_i = \begin{cases}
      \frac{\partial_i f}{f} + x_i \parenth{-\frac{\partial_i f \partial_i f}{f^2}}  & \quad j = i\\
      x_i \parenth{\frac{\partial_{ij} f}{f} - \frac{\partial_i f \partial_j f}{f^2}} & \quad j\neq i.
    \end{cases}
  \end{align*}  
  Hence, the first part in the derivative becomes
  \begin{align*}
    \angles{\nabla S_i(x), x} &= S_i(x) + \sum_{j=1}^n R_{ij}(x) - S_i(x)\sum_j S_j(x), \\
    &= S_i(x) (1-S_i(x)) + \sum_{j=1, j \neq i}^n \parenth{R_{ij}(x) - S_i(x) S_j(x)}
  \end{align*}
  where $R_{ij}(x) = \frac{x_i x_j \partial_{ij} f(x)}{f(x)}$ and $R_{ii} = 0$.
  
  For the jump increments, using that $f$ and $\partial_i f$ are multilinear, we have
  \begin{align*}
    f(x - 2x_j e_j) &= f(x) - 2 x_j \partial_j f(x), \\
    \partial_i f(x - 2x_j e_j) &= \partial_i f(x) - 2 x_j \partial_{ij} f(x), \text{ for } j \neq i \\
    \partial_i f(x - 2x_i e_i) &= \partial_i f(x). 
  \end{align*}
  Hence,
  \begin{align*}
    \Delta_i S_i(x) &= \frac{-x_i \partial_i f(x)}{f(x - 2x_i e_i)} - \frac{x_i \partial_i f(x)}{f(x)} \\
    &= \frac{- S_i(x)}{1-2S_i(x)} - S_i(x) \\
    &= \frac{-S_i(x) (1-S_i(x))}{\frac12 (1-2S_i(x))}. \\
    \Delta_j S_i(x) &= \frac{x_i (\partial_i f(x) - 2 x_j \partial_{ij} f(x) ) }{f(x) - 2 x_j \partial_j f(x)} - S_i(x) \\
    &= \frac{S_i(x) - 2R_{ij}(x)}{1-2S_j(x)} - S_i(x) \\
    &= \frac{S_i(x) S_j(x) - R_{ij}(x)}{\frac12 (1-2S_j(x))}.
  \end{align*}

  Since the $j$-th coordinate jump rate is $\frac12\parenth{1-2S_j(x)}$, the drift coming from centering the jumps cancel exactly with $\angles{\nabla S_i(x), x}$. We obtain that
  \begin{align*}
    dS_i(\rV_t) = \sum_{j=1}^n \int \Delta_j S_i(\rV_{t-}) \mathbf{1}_{0 < z \leq \frac{1}{2}(1-2S_j(\rV_{t-}))} \wNj(dt, dz). 
  \end{align*}
  Hence, $S_i(\rV_t)$ is a local martingale. Since $S_i(\rV_t)$ is also bounded by Lemma~\ref{lem:edge_ratio}, it is a martingale. 
\end{proof}

\end{document}